\theoremstyle{plain}
\newtheorem{thm}{Theorem}[section]
\newtheorem{lem}[thm]{Lemma}
\newtheorem{cor}[thm]{Corollary}
\newtheorem{prop}[thm]{Proposition}
\newtheorem{thma}{Theorem}
\theoremstyle{definition}
\newtheorem{defn}[thm]{Definition}
\newtheorem{rem}[thm]{Remark}
\newtheorem{example}[thm]{Example}
\newcommand{\N}{\mathbb{N}}
\newcommand{\R}{\mathbb{R}}
\newcommand{\C}{\mathbb{C}}
\newcommand{\D}{\mathbb{D}}
\newcommand{\A}{\mathbb{A}}\newcommand{\Ab}{\mathbf{A}}
\newcommand{\B}{\mathbb{B}}\newcommand{\Bb}{\mathbf{B}}
\newcommand{\I}{\mathbb{I}}\newcommand{\Ib}{\mathbf{I}}
\renewcommand{\S}{\mathbb{S}}\newcommand{\Sb}{\mathbf{S}}
\newcommand{\pb}{\mathbf{p}}\newcommand{\bb}{\mathbf{b}}
\newcommand{\inter}{\textrm{int}}
\newcommand{\roots}{\textsc{Roots}}
\newcommand{\critpts}{\textsc{CritPts}}
\newcommand{\critvals}{\textsc{CritVls}}
\newcommand{\cat}{\textsc{CAT}}
\newcommand{\rts}{\mathbf{rts}}
\newcommand{\cpt}{\mathbf{cpt}}
\newcommand{\cvl}{\mathbf{cvl}}
\newcommand{\hgt}{\mathbf{hgt}}
\renewcommand{\arg}{\mathbf{arg}}
\newcommand{\zer}{\mathbf{0}}
\newcommand{\lat}{\textsc{Lat}}
\newcommand{\lng}{\textsc{Long}}
\newcommand{\lvl}{\textsc{Lvl}}
\newcommand{\dir}{\textsc{Dir}}
\newcommand{\NC}{\textsc{NC}}
\newcommand{\multi}{\textsc{Mult}}
\newcommand{\sym}{\textsc{Sym}}
\newcommand{\poly}{\textsc{Poly}}
\newcommand{\size}[1]{|#1|}
\newcommand{\vb}{\mathbf{v}}
\newcounter{joncomments}
\newcounter{michaelcomments}
\tikzstyle{BlueLine}=[line width=0.3mm,color=blue,text=black]
\tikzstyle{BluePoly}=[BlueLine,fill=blue!20]
\tikzstyle{RedLine}=[line width=0.3mm,color=red,text=black]
\tikzstyle{RedPoly}=[RedLine,fill=red!20]
\tikzstyle{GreenLine}=[thick,color=black!30!green,text=black]
\tikzstyle{GreenPoly}=[thick,color=green!50!black,fill=green!30,join=bevel]
\tikzstyle{PurpleLine}=[line width=0.3mm,color=red!60!blue!80,text=black]
\tikzstyle{PurplePoly}=[PurpleLine,fill=red!40!blue!50]
\tikzstyle{OrangeLine}=[thick,color=orange]
\tikzstyle{GrayLine}=[thick,color=black!50!gray]
\tikzstyle{GrayPoly}=[GrayLine,fill=gray!20]
\tikzstyle{dot}=[shape=circle,draw,color=black,fill=black,inner sep=1.5pt]
\tikzstyle{opendot}=[shape=circle,draw,color=black,fill=white,inner sep=1.5pt]
\tikzstyle{bigdot}=[dot,inner sep=2pt]
\tikzstyle{littledot}=[dot,inner sep=0.75pt]
\tikzstyle{littleopendot}=[opendot,inner sep=0.75pt]
\tikzstyle{smalldot}=[dot,inner sep=1pt]
\tikzstyle{disk}=[thick,shape=circle,draw,color=black,fill=yellow!10]
\tikzstyle{lightdisk}=[thick,shape=circle,draw,color=black!40,fill=yellow!20]
\tikzstyle{plate}=[thick,shape=rectangle,draw,color=black,fill=yellow!10,rounded corners,minimum size=1.1cm]
\def\shade{.8}
\definecolor{c00}{rgb}{0,\shade,\shade} 
\definecolor{c60}{rgb}{0,0,\shade}
\definecolor{c120}{rgb}{\shade,0,\shade} 
\definecolor{c180}{rgb}{\shade,0,0}
\definecolor{c240}{rgb}{\shade,\shade,0} 
\definecolor{c300}{rgb}{0,\shade,0}
\definecolor{c360}{rgb}{0,\shade,\shade}
\colorlet{c10}{c60!16.7!c00} \colorlet{c20}{c60!33.3!c00}
\colorlet{c30}{c60!50!c00}   \colorlet{c40}{c60!66.7!c00}
\colorlet{c50}{c60!83.3!c00}
\colorlet{c70}{c120!16.7!c60} \colorlet{c80}{c120!33.3!c60}
\colorlet{c90}{c120!50!c60}   \colorlet{c100}{c120!66.7!c60}
\colorlet{c110}{c120!83.3!c60}
\colorlet{c130}{c180!16.7!c120} \colorlet{c140}{c180!33.3!c120}
\colorlet{c150}{c180!50!c120}   \colorlet{c160}{c180!66.7!c120}
\colorlet{c170}{c180!83.3!c120}
\colorlet{c190}{c240!16.7!c180} \colorlet{c200}{c240!33.3!c180}
\colorlet{c210}{c240!50!c180}   \colorlet{c220}{c240!66.7!c180}
\colorlet{c230}{c240!83.3!c180}
\colorlet{c250}{c300!16.7!c240} \colorlet{c260}{c300!33.3!c240}
\colorlet{c270}{c300!50!c240}   \colorlet{c280}{c300!66.7!c240}
\colorlet{c290}{c300!83.3!c240}
\colorlet{c310}{c360!16.7!c300} \colorlet{c320}{c360!33.3!c300}
\colorlet{c330}{c360!50!c300}   \colorlet{c340}{c360!66.7!c300}
\colorlet{c350}{c360!83.3!c300}
\colorlet{ca}{c00}
\colorlet{cb}{c60}
\colorlet{cc}{c180}
\colorlet{cd}{c300}
\colorlet{bckgrd}{green!10!white}
\colorlet{cA}{black!10!white}
\colorlet{cB}{black!20!white}
\colorlet{cC}{black!40!white}
\colorlet{cD}{black!60!white}
\colorlet{cE}{black!80!white}
\def\anga{0} \def\angb{60} \def\angc{180} \def\angd{-60}
\def\rp{.02}   
\begin{document}

\title[Geometric Combinatorics of Polynomials I]{Geometric
  Combinatorics of Polynomials I:\\ the case of a single polynomial}

\author{Michael Dougherty}
\author{Jon McCammond}
\date{August 1, 2020}

\begin{abstract}
  There are many different algebraic, geometric and combinatorial
  objects that one can attach to a complex polynomial with distinct
  roots. In this article we introduce a new object that encodes many
  of the existing objects that have previously appeared in the
  literature.  Concretely, for every complex polynomial $p$ with
  $d$ distinct roots and degree at least $2$, we produce a canonical
  compact planar $2$-complex that is a compact metric version of a
  tiled phase diagram.  It has a locally $\cat(0)$ metric that is
  locally Euclidean away from a finite set of interior points indexed
  by the critical points of $p$, and each of its $2$-cells is a metric
  rectangle. From this planar rectangular $2$-complex one can 
  use metric graphs known as metric cacti and metric banyans
  to read off several pieces of combinatorial data:
  a chain in the partition lattice, a cyclic factorization of a
  $d$-cycle, a real noncrossing partition (also known as a primitive
  $d$-major), and the monodromy permutations for the polynomial.  
  This article is the first in a series.
\end{abstract}

\maketitle

\begin{center}
\textit{Dedicated to the memory of Patrick Dehornoy.}
\end{center}

\section*{Introduction}

Complex polynomials have long been central objects in many fields, and
efforts to understand them have incorporated several different algebraic, 
geometric and combinatorial tools.  Examples include the study of
polynomial lemniscates \cite{catanese-paluszny,catanese-wajnryb,
  cacti-braids-polynomials,zvonkin00}, monodromy groups
\cite{catanese-paluszny,catanese-wajnryb,humphries03,michel06,wegert20}, 
metric cacti \cite{nekrashevych14}, and modulus graphs
\cite{epstein-hanin-lundberg}.  In this article we add a new item to
this list.  Recall that if $p\colon\C\to\C$ is a complex polynomial
with $p'(b) = 0$, then $b$ is a \emph{critical point} and $p(b)$ is
the corresponding \emph{critical value}.  It is well known that if $p$
has distinct roots, then the critical points are disjoint from the
roots and thus the critical values of $p$ are nonzero.  If we write
$\C_\rts$ and $\C_\zer$ to denote the complex plane with the roots
removed or with zero removed, respectively, then $p$ may be restricted
to a map $p_\zer \colon\C_\rts\to\C_\zer$.  Our main theorems concern
the manipulation of this restricted polynomial map into a map between
metric cell structures on compact surfaces.

By a straightforward projection, $\C_\zer$ may be identified with the
interior of a compact annulus $\A$. If we pull the standard annular
metric back to $\C_\rts$ via $p_\zer$, then $\C_\rts$ becomes a
bounded metric space which is an open ``branched'' annulus, and its
metric completion $\B_p$ is a compact metric surface with
boundary. The result is a new discretely branched map $p_A:\B_p \to\A$
between compact metric surfaces with boundary.

\begin{thma}[Geometry]\label{thma:metric-planar-complex}
  Let $p$ be a complex polynomial with $d$ distinct roots.  Then
  $\B_p$ is a compact metric surface of genus $0$ with $d+1$ boundary
  components. The metric for $\B$ is locally $\cat(0)$ and is locally
  Euclidean away from a finite set of interior points indexed by the
  critical points of $p$.
\end{thma}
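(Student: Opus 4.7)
The plan is to analyze the pullback metric on $\C_\rts$ point-by-point and end-by-end, then assemble the pieces.  Since $p$ has $d$ distinct (hence simple) roots, it has degree $d$, and $p_\zer\colon \C_\rts\to\C_\zer$ is a holomorphic branched covering of degree $d$ whose branch locus in the target is the set of critical values.  Pulling back the standard flat annular metric on $\A$ (which is flat on the interior $\C_\zer$) produces a length metric on $\C_\rts$; the claim is that its completion $\B_p$ is exactly the compact metric surface described.

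First I would carry out the local analysis on the interior.  At a regular point the map $p_\zer$ is a local biholomorphism, so the pullback is a flat disk and the metric is locally Euclidean.  At a critical point $b$ of local degree $e_b \ge 2$ one has
\[
  p(z) - p(b) = c\,(z-b)^{e_b} + \text{higher order},
\]
so in suitable holomorphic coordinates the map becomes $z \mapsto z^{e_b}$.  Pulling back a flat target metric under $z\mapsto z^{e_b}$ produces a Euclidean cone of total angle $2\pi e_b \ge 4\pi$, and any flat cone of angle at least $2\pi$ is locally $\cat(0)$.  This pins down the non-Euclidean locus as the finite set indexed by the critical points of $p$ and establishes local $\cat(0)$-ness at every interior point.

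Next I would study the ends of $\C_\rts$ in order to count boundary components.  Near a simple root $r$ of $p$, the map $p$ is locally biholomorphic, so a punctured neighborhood of $r$ is pulled back isometrically from a punctured neighborhood of $0$ in $\C_\zer$, and its completion contributes a single boundary circle matching the inner boundary of $\A$.  Near $\infty$, the map $p$ is an unramified $d$-fold cover of a punctured neighborhood of $\infty$, so the preimage of a small cylindrical collar at the outer boundary of $\A$ is a single connected cylinder that wraps $d$ times around; its completion again adds one boundary circle (now of $d$ times the length).  Summing gives $B = d + 1$ boundary components for $\B_p$, and the metric is flat on a neighborhood of each boundary point.

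Finally I would compute the genus via an Euler characteristic count.  The completion produces a branched cover $p_A\colon \B_p \to \A$ of degree $d$ between compact surfaces with boundary, ramified only over the critical values, with $\chi(\A) = 0$; the Riemann--Hurwitz formula gives
\[
  \chi(\B_p) = d\cdot\chi(\A) - \sum_b (e_b - 1) = 0 - (d-1) = 1 - d,
\]
since $\sum_b (e_b - 1) = \deg p' = d-1$.  Combining with $\chi(\B_p) = 2 - 2g - B$ and $B = d+1$ forces $g = 0$.  I expect the main obstacle to be the end analysis in the previous paragraph: one must verify that the pullback metric really produces a single cylindrical collar at each root and at $\infty$, so that completion adds exactly one circle per end rather than something more pathological, and that $p_\zer$ extends to a genuine branched cover $p_A$ of $\A$ on which the Riemann--Hurwitz bookkeeping is legitimate.
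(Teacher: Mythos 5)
Your plan is correct and follows the same overall skeleton as the paper: a local analysis on the interior, an end analysis at the roots and at infinity to count boundary circles, and a topological argument for genus $0$. The paper's version is spread over Proposition~\ref{prop:open-brannulus}, Remarks~\ref{rem:near-a-root} and~\ref{rem:near-infinity}, and Proposition~\ref{prop:brannulus}, and your end analysis (one collar per simple root pulled back isometrically from a collar of $\lat_{-1}$; one connected $d$-fold cylinder at infinity over a collar of $\lat_1$) is exactly theirs. You differ in two sub-steps. For the genus, the paper simply observes that $\B_p^\inter$ is homeomorphic to the $d$-punctured plane $\C_\rts$ and hence has genus $0$; your Riemann--Hurwitz computation $\chi(\B_p)=d\cdot 0-(d-1)=1-d$ reaches the same conclusion and doubles as a consistency check on the boundary count, at the cost of first knowing that $p_A$ is a genuine degree-$d$ branched cover of compact surfaces. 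For local $\cat(0)$ at a branch point, the paper defers this to the cell structure of Section~\ref{sec:metric-cells}: a critical point of multiplicity $k$ is a vertex of valence $4(k+1)$ in a complex of metric rectangles, so the link condition holds. Your route is the ``direct'' one the paper alludes to but does not write out: in local coordinates $p$ looks like $z\mapsto z^{e_b}$, the pullback is a Euclidean cone of angle $2\pi e_b\geq 4\pi$, and cones of angle at least $2\pi$ are locally $\cat(0)$. Both are standard and correct; your version has the advantage of not requiring the degree-at-least-$2$ hypothesis needed to define the cell structure, while the paper's version sets up machinery reused for Theorem~\ref{thma:topology}.
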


If $p$ has degree at least $2$, then it has critical values and these
determine a rectangular cell structure for $\A$ which we refer to as
$\Ab_p$. Pulling back via $p_A$ provides the metric rectangular cell
complex $\Bb_p$ which can be seen as a compact metric version of a
tiled phase diagram (as described in \cite{wegert20}) and makes $p$
into a cellular map $\pb\colon\Bb_p\to\Ab_p$.

\begin{thma}[Topology]\label{thma:topology}
  Let $p$ be a complex polynomial with $d$ distinct roots and degree
  at least $2$.  Each vertex on the boundary of the metric cell
  complex $\Bb_p$ has degree $3$. Every other vertex is labeled by a
  point in $\C$; those which are critical points of multiplicity $k$ 
  have valence $4(k+1)$, while the rest have valence $4$.
\end{thma}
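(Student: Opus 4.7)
The plan is to first describe the rectangular cell structure $\Ab_p$ on the annulus $\A$ explicitly and read off the valence of each of its vertex types, and then pull this data back through the cellular map $\pb\colon\Bb_p\to\Ab_p$ using the fact that $p$ is a local biholomorphism away from its critical points and admits the standard local normal form $w\mapsto w^{k+1}$ at a critical point of multiplicity $k$.

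Identifying $\C_\zer$ with the interior of $\A$ via $z\mapsto(\log|z|,\arg z)$, the critical values of $p$ determine a finite collection of concentric circles (at the radii $|p(b_i)|$) and radial arcs (at the angles $\arg p(b_i)$) which, together with the two boundary circles of $\A$, cut the annulus into metric rectangles. Inspecting each vertex of $\Ab_p$ then shows that every interior vertex is a transverse intersection of exactly one circle with exactly one radial arc and therefore has valence $4$, whereas every boundary vertex is the intersection of a radial arc with one of the two boundary circles and therefore has valence $3$ (two boundary arcs plus one incoming radial edge). The key observation at this stage is that because the roots of $p$ are distinct, each critical point lies in the interior of $\C_\rts$ and its critical value is both nonzero and finite, so every critical value is an interior vertex of $\Ab_p$ and never a boundary vertex.

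By construction the vertices of $\Bb_p$ are precisely the preimages under $\pb$ of the vertices of $\Ab_p$, and interior vertices inherit a label in $\C$ from the identification of the interior of $\Bb_p$ with $\C_\rts\subset\C$. Away from its critical points $\pb$ is a local homeomorphism (indeed a local isometry for the pulled-back metric), so every vertex of $\Bb_p$ that is a regular point of $p$ has the same valence as its image; combined with the previous paragraph this gives valence $3$ at every boundary vertex and valence $4$ at every non-critical interior vertex. Finally, at a critical point $b$ of multiplicity $k$ one chooses holomorphic coordinates putting $p$ in the form $w\mapsto w^{k+1}$, which exhibits $\pb$ near $b$ as a $(k+1)$-fold cover ramified at $b$, so each of the $4$ sectors meeting $p(b)$ in $\Ab_p$ lifts to $k+1$ distinct sectors meeting $b$ in $\Bb_p$, yielding valence $4(k+1)$. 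The main step requiring genuine care is this last count: one must verify that the $4(k+1)$ lifted half-edges are genuinely distinct edges of $\Bb_p$ rather than being identified in pairs, which follows immediately from the local model $w\mapsto w^{k+1}$ once it is in hand.
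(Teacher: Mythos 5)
Your argument is correct and follows essentially the same route as the paper: read off the valences ($4$ interior, $3$ boundary) from the product cell structure on $\Ab_p$, transfer them through the local homeomorphism at regular points, and use the local model $w\mapsto w^{k+1}$ at a critical point of multiplicity $k$ (which the paper obtains by integrating $p'(z)=(z-b)^kq(z)$) to get $4(k+1)$. The only cosmetic difference is your identification of $\C_\zer$ with the open annulus via $z\mapsto(\log|z|,\arg z)$ rather than the paper's bounded-height map $t=(r^2-1)/(r^2+1)$, which changes the metric but not the valence counts.
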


A natural pair of transverse foliations on $\A$ emerges by envisioning
the annulus as being oriented vertically. One foliation consists of
horizontal latitude circles and the other consists of vertical
longitude lines. In each case, we refer to a foliation leaf as
\emph{critical} if it contains a critical value of $p$ and
\emph{regular} if it does not.  The pullbacks of these foliations in
$\B_p$ determine the cell structure of $\Bb_p$ in the sense that each
critical leaf is a subcomplex of $\Ab_p$ and its preimage is a
subcomplex of $\Bb_p$. For simplicity, we refer to the preimage of a
latitude as a \emph{level set} and the preimage of a longitude as a
\emph{direction set}.  Each of these preimages (or equivalently
certain subcomplexes of $\Bb_p$) determines combinatorial objects
associated to $p$.

\begin{thma}[Combinatorics]\label{thma:combinatorics}
  Let $p$ be a complex polynomial with $d$ distinct roots.  Each
  regular level set determines a partition of the set $\rts$ of
  roots. Taken together, the collection of all regular level sets
  determines a chain in the partition lattice $\Pi_\rts$. Meanwhile,
  each regular direction set determines a cyclic ordering of the set
  of roots.  Taken together, the collection of all regular direction
  sets determines a factorization of the $d$-cycle
  $(1\ \cdots\ d)$. Finally, given any regular level set and regular
  direction set, the partition determined by the former is noncrossing
  with respect to the permutation determined by the latter.
\end{thma}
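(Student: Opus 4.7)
The plan is to prove the three claims separately, using the framework established in Theorems~A and~B. For the partition chain, I fix a regular value $r>0$ of the modulus (avoiding $|p(b)|$ for every critical point $b$) and consider the level set $L_r\subset\B_p$ obtained by pulling back the latitude circle $\{|w|=r\}\subset\A$. Since the latitude misses all critical values, $L_r$ is a disjoint union of simple closed curves, and by the genus-zero conclusion of Theorem~A each component separates $\B_p$. The maximum principle forces $|p|<r$ on one side and $|p|>r$ on the other; the low side contains a nonempty subset of root boundaries, since $|p|$ vanishes only at the roots. The resulting partition of $\rts$ refines as $r$ decreases---from the trivial partition at very large $r$ (a single loop enclosing all roots) to the discrete partition at very small $r$ (tiny loops around each root)---with changes occurring only when $r$ crosses a critical modulus and a saddle bifurcation splits a component of $L_r$. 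Listing the distinct partitions in order produces the desired chain in $\Pi_\rts$.

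For the factorization of the $d$-cycle, I fix a regular angle $\theta$ and consider the direction set $D_\theta\subset\B_p$ obtained by pulling back the longitude ray from the origin at angle $\theta$. The function $|p|$ is strictly monotonic along each component of $D_\theta$ (the map $p$ sends the component homeomorphically to the ray, parametrized by modulus), so every component is a simple arc running from a root boundary to the infinity boundary. Because the monodromy of $p$ at infinity is a $d$-cycle, the infinity boundary of $\B_p$ is a single circle carrying $d$ cyclically arranged endpoints of $D_\theta$; labeling them $1,\dots,d$ in cyclic order defines a bijection $\sigma_\theta\colon\{1,\dots,d\}\to\rts$. As $\theta$ increases through a critical longitude containing a critical point of multiplicity $k$, a local analysis of $p$ near that point shows that $\sigma_\theta$ is modified by a $(k+1)$-cycle on the affected positions. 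Over a full revolution of $\theta$, the cumulative rearrangement equals the net rotation of endpoints on the infinity boundary, which is the $d$-cycle $(1\ \cdots\ d)$; so the ordered product of the local cycles is $(1\ \cdots\ d)$.

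For the noncrossing assertion, I take a regular pair $(r,\theta)$ simultaneously and consider the subsurface $R\subset\B_p$ of points where $|p|\geq r$, bounded by $L_r$ and the infinity boundary. Monotonicity of $|p|$ along the arcs of $D_\theta$ forces each arc to cross $L_r$ exactly once, so $D_\theta\cap R$ consists of $d$ disjoint arcs, each joining some component $\ell_i\subset L_r$ to the infinity boundary with its endpoint labeled by a root in the block $S_i$ associated to $\ell_i$. Since $R$ is planar (a sphere with $m+1$ disks removed, where $m$ is the number of components of $L_r$), arcs attached to different components $\ell_i$ cannot interleave at the infinity boundary: if two distinct blocks $S_i,S_j$ had interleaved positions, a pair of arcs from $\ell_i$ together with suitable sub-arcs of $\ell_i$ and of the infinity boundary would bound a disk in $R$ separating $\ell_j$ from one of the arcs attached to it, contradicting the connectedness of $\ell_j$. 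Hence the partition is noncrossing with respect to $\sigma_\theta$.

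The main obstacle will be the transition analysis in Part~2: identifying precisely which $(k+1)$-cycle arises when $\theta$ passes through a critical longitude containing a critical point of multiplicity $k$, and verifying that the ordered product of these cycles over one revolution equals $(1\ \cdots\ d)$ under the chosen labeling conventions. Parts~1 and~3 reduce to fairly standard arguments in planar topology once the pullback structure of $L_r$ and $D_\theta$ is in hand, whereas Part~2 requires a careful local model at each critical point together with a global book-keeping argument relating local cycles to the global monodromy at infinity.
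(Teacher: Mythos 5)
Your proposal follows essentially the same route as the paper on all three fronts: the chain of partitions comes from a Morse-theoretic sweep of the sublevel sets $\{|p|\leq r\}$ (the paper's Lemma on critical level sets and its cactus diagrams), the cyclic ordering comes from the $d$ disjoint arcs of a regular direction set connecting root circles to the circle at infinity (the paper's lemma on open edges and cyclic orderings), the factorization comes from tracking how the labeling changes as $\theta$ sweeps past critical longitudes and telescoping around the full circle, and the noncrossing claim comes from the planarity of the union of a regular level set with a regular direction set. Parts 1 and 3 are sound as written. The one place you oversimplify is the transition analysis in Part 2: a critical longitude need not contain a single critical point --- it may contain several critical values of the same argument at different heights, and a single critical value may have several critical points in its fiber --- so the permutation picked up when $\theta$ crosses a critical direction set is in general a product of disjoint cycles, one $(k_j+1)$-cycle for each critical point of multiplicity $k_j$ on that longitude (equivalently, one cycle for each nontrivial block of the noncrossing partition that the corresponding metric banyan induces on the circle at infinity), rather than a single $(k+1)$-cycle. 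The paper organizes exactly this bookkeeping through its metric banyans, real noncrossing partitions, and noncrossing hypertrees, and notably its own proof that the ordered product equals the $d$-cycle is only sketched (deferring to the noncrossing hypertree literature); so the obstacle you flag is real, but your global argument --- that the cumulative effect of one full revolution is the deck rotation of the $d$-fold cover at infinity, hence the $d$-cycle --- is the same telescoping the paper uses and is correct once the local transitions are stated in the corrected, multi-cycle form.
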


In the discussion surrounding Theorem~\ref{thma:combinatorics}, we
also describe several useful combinatorial objects associated to a
complex polynomial. These include metric graphs known as \emph{metric
  cacti} and \emph{metric banyans}, as well as a continuous variant of
a noncrossing partition which we refer to as a \emph{real noncrossing
  partition}. Some of these tools are very closely related to the
\emph{primitive $d$-major} defined in \cite{thurston19}.

This article is the first in a series in which we highlight the strong
connections between complex polynomials with distinct roots and a
variety of metric and combinatorial objects.  Future articles will use
these tools to examine the space of polynomials and address how these
objects change as polynomials are continuously varied.  As an example,
we will show that the combinatorial data outlined in
Theorem~\ref{thma:combinatorics} can be used to reconstruct the cell
structure for $\Bb_p$; in particular, two polynomials are
topologically equivalent if they have the same associated chain of
partitions and factorization of the $d$-cycle $(1\ \cdots\ d)$.

More generally, the goal of this series of articles is to describe a
natural stratification (and compactification) of the space of complex
polynomials with $d$ distinct ordered roots (also known as the
complement of the complex braid arrangement) into strata with locally
flat Euclidean metrics. We will also describe a deformation retraction
of this space onto a subcomplex which is isometric to the \emph{pure
  dual braid complex}, a simplicial complex associated to the braid
group \cite{brady01,bradymccammond10}.  This embedding of the pure
dual braid complex in the complex braid arrangement complement is new
to the literature, but the existence of such an embedding has been
known to Daan Krammer for some time.  This deformation retraction is
also closely related to the retraction described in \cite{thurston19}.

The present article is structured as
follows. Sections~\ref{sec:disks-annuli} and~\ref{sec:draw-a}
establish some conventions regarding disks, annuli and how to draw
them. Section~\ref{sec:poly-roots} reviews some basic information on
roots, critical points and critical values, and we use this in
Section~\ref{sec:metric-cells} to construct the branched annulus and
establish its key properties. In Section~\ref{sec:draw-b}, we give a
detailed description of how to draw a planar picture for the branched
annulus.  Section~\ref{sec:level-direction} detail the combinatorial
information encoded in the two foliations of this surface. Finally,
Sections~\ref{sec:partitions},~\ref{sec:factorizations}
and~\ref{sec:monodromy} describe how a chain of root partitions, a
factorization of a $d$-cycle and the monodromy action can be read off
of the branched annulus.

\vspace{1em}

\noindent\textbf{Acknowledgements:} This article is dedicated to the
memory of Patrick Dehornoy.  In the short film that he created as part
of the celebration of his retirement, Patrick depicted himself looking
down from paradise and keeping track of the afterlife of his
mathematical work.  Patrick's ideas have had an enormous impact on the
direction of our research, and as this series of articles progresses,
we hope to show that the Garside structures he pioneered are deeply
connected to the geometric combinatorics of complex polynomials.  The
authors are also deeply indebted to Daan Krammer; his comments to the
second author (at Patrick's retirement conference) prompted this line
of inquiry.  Finally, we would also like to thank Steve Trettel for
his help in visualizing the foliations described in
Section~\ref{sec:level-direction}.

\section{Disks and Annuli}\label{sec:disks-annuli}

This section describes two elementary homeomorphisms: one from the
complex numbers to the interior of a disk using polar coordinates, and
another from the nonzero complex numbers to the interior of an annulus
using cylindrical coordinates.  The map to the annulus is used to
define the branched annulus that is our main object of study.  The map
to the disk is only used to give a planar representation of
the resulting surface.  For simplicity, let $\zer = \{0\}$ and let
$\C_A$ be $\C \setminus A$ for any $A \subset \C$.  In particular,
$\C_\zer = \C^\ast$.

\begin{defn}[Polar and cylindrical coordinates]
  Every nonzero complex number has a unique \emph{polar form} $z =
  ru$, where $r = \size{z} \in \R^+$ is its \emph{magnitude} and $u =
  z/r \in \S^1 \subset \C$ is its \emph{argument} or \emph{direction}.
  In particular, the map $z \mapsto (r,u)$ identifies $\C_\zer$ with
  $\R^+ \times \S^1$.  At the origin $r=0$ and $u$ is arbitrary.  If
  we identify $\C$ with the $xy$-plane in $\mathbb{R}^3$, via the
  pairing of $x+iy$ with $(x,y,0)$, then polar coordinates $(r,u)$ on
  $\C$ can be extended to cylindrical coordinates $(r,u,t)$ on $\R^3$.
  On the $t$-axis, $r=0$ and $u$ is arbitrary. Note that \emph{height}
  in $\R^3$ is denoted $t$, since $z$'s are used for complex numbers.
\end{defn}

\begin{defn}[Unit disk]\label{def:unit-disk}
  Let $\D \subset \C$ be the closed unit disk and let $\D^\inter$ be
  its interior.  Let $i_D\colon\C\hookrightarrow\D$ be the map that
  sends $z=ru$ to $i_D(z) = su$ where $s = r/\sqrt{r^2+1}$.  The map
  $i_D$ is a homeomorphism from $\C$ to $\D^\inter$.  Geometrically,
  it sends the horizontal coordinates of the hyperboloid model of the
  hyperbolic plane to the Klein model.  For clarity, we use $s$ for
  the magnitude of a complex number in $\D$ and $r$ for the magnitude
  of a complex number in $\C$.
\end{defn}

The second homeomorphism is constructed using stereographic
projection.

\begin{defn}[Riemann sphere]\label{def:riemann-sphere}
  The $2$-sphere in $\R^3$ can be viewed as the one-point
  compactification of the complex plane via stereographic projection.
  Projection from the point $(0,0,1)$ sends the point $z = ru \in \C$
  to the point with cylindrical coordinates
  $(\frac{2r}{1+r^2},u,\frac{r^2-1}{r^2+1}) \in \S^2$.  This sends
  $\C$ to the $2$-sphere minus the north pole and the origin is sent
  to the the south pole.  Thus the image of $\C_\zer$ is the
  twice-punctured $2$-sphere with both the north and south poles
  removed.
\end{defn}

\begin{defn}[Vertical annulus]\label{def:annulus}
  Let $\S = \S^1 \subset \C$, let $\I = [-1,1] \subset \R$, and let
  $\A = \S \times \I$ be the closed \emph{vertical annulus} formed by
  the points in $\R^3$ with cylindrical coordinates $(1,u,t)$, $u \in
  \S$ and $t \in \I$.  These are the points that are distance $1$ from
  the $t$-axis, in an arbitrary direction in $\S$ and with a height in
  $\I$.  The interior of $\A$ is the open vertical annulus $\A^\inter
  = \S \times \I^\inter$. Let $i_A\colon \C_\zer \to \A$ be the map
  that sends $z=ru$ to the point $i_A(z) = (1,u,t)$, where
  $t=(r^2-1)/(r^2+1)$. Then $i_A$ is a homeomorphism from $\C_\zer$ to
  $\A^\inter$.  Geometrically, it is the result of stereographically
  projecting the once-punctured plane to the twice-punctured sphere
  and then radially projecting away from the $t$-axis to the vertical
  annulus $\A$.
\end{defn}

\begin{figure}
  \includegraphics[width=4in]{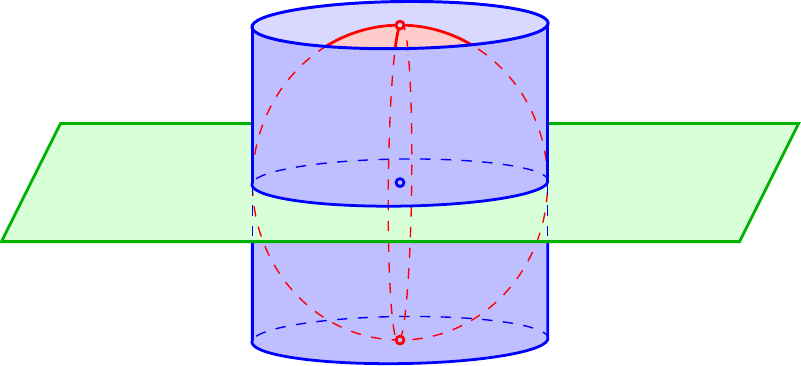}
  \caption{The once-punctured plane, the twice-punctured sphere and
    the vertical annulus.}
  \label{fig:three-spaces-together}
\end{figure}

Figure~\ref{fig:three-spaces-together} shows the geometric
relationship between the once-punctured plane, the twice-punctured
sphere and the vertical annulus.  The vertical annulus is a metric
product and there are projection maps onto each factor.

\begin{defn}[Projection maps]
  Let $\hgt \colon \A \to \I$ be the map that sends $(1,u,t)$ to its
  height $t$ and let $\arg \colon \A \to \S$ be the map that sends
  $(1,u,t)$ to its argument $u$.  We call $\hgt(x) \in \I$ the
  \emph{height of $x$} and we call $\arg(x) \in \S$ the
  \emph{argument of $x$}.
\end{defn}

Latitude circles and longitude lines are standard subsets of the
twice-punctured sphere, and we borrow these names for the
corresponding subsets of the annulus.

\begin{defn}[Latitude circles]\label{def:lat}
  A latitude circle on the $2$-sphere radially projects to a
  horizontal circle in $\A$.  We call $\lat_t = \hgt^{-1}(t) =
  \{(1,u,t) \mid u \in \S\}$ the \emph{latitude circle at height $t$}
  in $\A$. For each $t\in \I^\inter$, the preimage of $\lat_t \subset
  \A^\inter$ is the circle $i_A^{-1}(\lat_t) = \{ru \mid u \in\S^1\}
  \subset \C_\zer$ of radius $r= \sqrt{(1+t)/(1-t)}$ and the image
  $i_D(i_A^{-1}(\lat_t)) \subset \D$ is a circle of radius $s =
  \sqrt{(1+t)/2}$.  Both circles are centered at the origin.  The
  latitude circles $\lat_1$ and $\lat_{-1}$ are the \emph{boundary
    circles} of $\A$, and note that $\partial \A = \A \setminus
  \A^\inter = \lat_1 \cup \lat_{-1}$.
\end{defn}

\begin{defn}[Longitude lines]\label{def:long}
  A longitude line on the $2$-sphere radially projects to a vertical
  segment in $\A$.  We call $\lng_u = \arg^{-1}(u) = \{(1,u,t) \mid t
  \in \I\}$ the \emph{longitude line in direction $u$} in $\A$.  Note
  that it is the closure in $\A$ of the radial projection.  For each
  $u \in \S$, the preimage of the longitude line $\lng_u \subset \A$
  is the open ray $i_A^{-1}(\lng_u) = \{ru \mid r \in\R^+\} \subset
  \C_\zer$ and the image $i_D(i_A^{-1}(\lng_u)) \subset \D$ is the
  open line segment from $0$ to $u$ in $\D$.
\end{defn}

\begin{rem}[Transverse measures]
  The latitude circles and longitude lines form a very simple pair of
  transverse measured foliations arising from the Euclidean structure
  on the vertical annulus $\A$. The interval $\I$, which indexes the
  latitude circles, has measure $2$, the circle $\S$, which indexes
  the longitude lines, has measure $2\pi$, and the annulus $\A$ has
  area $4\pi$.
\end{rem}

The annulus $\A$ can be given a $2$-complex structure built out of
rectangles.

\begin{defn}[Points and rectangles]\label{def:rectangles}
  If $U \subset \S$ is a non-empty finite set of directions of size
  $\size{U} = k$, then there is a natural minimal cell structure on
  $\S$ that contains the $k$ elements of $U$ as vertices and the $k$
  intervals between them as edges.  We write $\Sb$ instead of $\S$
  when the circle has been given a specific cell structure.  If $T
  \subset \I^\inter$ is a finite set of heights of size
  $\size{T}=\ell$, then there is a natural minimal cell structure on
  $\I = [-1,1]$ that contains the $\ell+2$ elements of $T \cup
  \{-1,1\}$ as vertices and the $\ell+1$ intervals between them as
  edges.  We write $\Ib$ instead of $\I$ when this interval has been
  given a specific cell structure.  Given $U$ and $T$ as described
  above, let $\Ab = \Sb \times \Ib$ be the natural product cell
  structure on $\A$.  Note that this is the minimal cell structure on
  $\A$ that contains the latitude circles $\{\lat_t\}_{t\in T}$ and
  the longitude lines $\{\lng_u\}_{u\in U}$ as subcomplexes.  The
  $\ell$ latitude circles decompose the vertical annulus $\A$ into
  $\ell+1$ shorter annuli and the $k$ longitude lines decompose each
  annulus into $k$ rectangles.  Thus $\Ab$ has $k(\ell+2)$ $0$-cells,
  $k(2\ell+3)$ $1$-cells and $k(\ell+1)$ $2$-cells, and each $2$-cell
  is a metric rectangle.  More generally, one can create such a cell
  structure $\Ab$ from any non-empty finite subset $V \subset \A^\inter$.
  Simply define $U = \arg(V)$ and $T = \hgt(V)$ and proceed as above.
  The resulting rectangular $2$-complex $\Ab$ is the smallest
  rectangular tiling of this type that contains all of $V$ in its
  $0$-skeleton.
\end{defn}

\begin{example}[Four points]\label{ex:four-points}
  Consider the four points $\{.46,-1.62,.3\pm .56i\} \subset \C_\zer$
  and let $V \subset \A^\inter$ be the image of this set under $i_A$.
  The points in $\C_\zer$ have three magnitudes and four arguments, so
  their images in $\A$ have three heights and four arguments. The
  circle $\Sb$ has $4$ vertices and $4$ edges, the interval $\Ib$ has
  $5$ vertices and $4$ edges, and the $2$-complex $\Ab = \Sb \times
  \Ib$ has $20$ vertices, $36$ edges and $16$ rectangles.
\end{example}

For later use we introduce a notation for the connected covers of $\S$
or $\Sb$.

\begin{defn}[Covers of a circle]\label{def:circle-covers}
  For each positive integer $k$, let $\S(k) \subset \C$ denote the
  circle of radius $k$ centered at the origin.  The map $z\mapsto
  (z/k)^k$ restricts to a $k$-fold covering and a local isometry
  $\S(k) \to \S$. If $\S$ is given a cell structure, $\Sb$, then we
  can lift through the cover to obtain a cell structure for $\S(k)$
  which we denote as $\Sb(k)$, resulting in a cellular covering map
  $\Sb(k) \to \Sb$.
\end{defn}

\section{Drawing an Annulus}\label{sec:draw-a}

It is sometimes convenient to display structures on the vertical
annulus $\A \subset \R^3$ by embedding $\A$ into the unit disk $\D$.
This distorts the intrinsic metric of $\A$, but it produces a planar
image.  The composition $i_D \circ i_A^{-1}$ almost works for this
purpose since it maps the open annulus $\A^\inter$ homeomorphically
to $\D^\inter_\zer$, the open disk with the origin removed.
However, the extension of this map obtained by metrically completing
both domain and range is no longer a homeomorphism.
The upper boundary $\lat_1$ in $\A$ is sent homeomorphically to 
$\partial \D$, but at that bottom of the annulus, the entire circle 
$\lat_{-1}$ is sent to the origin.  To correct for this, we make a 
small modification to pull the image of the open annulus away from 
the puncture.

\begin{defn}[Enlarging a puncture]\label{def:enlarge-punct}
  Let $D = \D(\alpha)$ be the closed disk of radius $\alpha>0$
  centered at the origin and let $D_\zer = D \setminus \zer$.  The map
  $z \mapsto \alpha(z/\size{z})$ retracts the punctured disk $D_\zer$
  onto its boundary $\partial D_\zer = \alpha \S$, the circle of
  radius $\alpha$, and the straightline homotopy $H_\epsilon\colon D_0
  \to D_0$ sending $z$ to $z + \epsilon(\alpha(z/\size{z})-z)$, with
  $\epsilon \in [0,1]$, shows that this retraction is a deformation
  retraction.  Moreover, $H_\epsilon$ is a homeomorphism onto its
  image for all $0 \leq \epsilon < 1$, creating a homeomorphsim
  between $D_\zer$ and $D_B = D \setminus B$, where $B = \D(\beta)$ is
  the closed disk of radius $\beta = \epsilon \cdot \alpha < \alpha$.
  This can be extended to the rest of $\C$ by fixing all points
  outside $D$, and the extended $H_\epsilon \colon \C_\zer \to
  \C_\zer$ creates a homeomorphism between $\C_\zer$ and its image
  $\C_B$ that is the identity outside of $\C_D$.
\end{defn}

\begin{figure}
\begin{tikzcd}
  \C_\zer \arrow[r,"H_\epsilon"] 
  \arrow[d,hookrightarrow,"i_A"] &
  \C_\zer \arrow[r,hookrightarrow,"i"] & 
  \C \arrow[d,hookrightarrow,"i_D"] \\
  \A \arrow[rr,hookrightarrow,"j_{AD}"] & &
            \D  \\
\end{tikzcd}
\caption{Drawing the annulus $\A$ inside the disk
  $\D$.\label{fig:drawing-A}} 
\end{figure}

\begin{defn}[Drawing $\A$ inside $\D$]\label{def:drawing-annuli}
  Pick small numbers $\alpha > \beta > 0$ and define the extended map
  $H_\epsilon \colon \C_0 \to \C_0$ as described in
  Definition~\ref{def:enlarge-punct} with $\epsilon = \beta/\alpha$.
  With this slight pertubation around the origin, the composition $i_D
  \circ H_\epsilon \circ i_A^{-1}$ maps the open annulus
  homemorphically to the open unit disk with a closed neighborhood of
  the origin removed, and this homeomorphism does extend to a
  well-behaved embedding $j_{AD} \colon \A \hookrightarrow \D$ between
  their metric completions.  See Figure~\ref{fig:drawing-A}.  In
  particular, $j_{AD}$ homeomorphically embeds $\A$ into $\D$ and it
  agrees with the metric completion of the map $i_A^{-1} \circ i_D$
  except in a small neighborhood of the lower boundary circle
  $\lat_{-1}$.  We use this type of identification whenever we draw
  $\A$ inside $\D$.
\end{defn}

\begin{example}[Four points, revisited]
  Example~\ref{ex:four-points} described a metric rectangular cell
  structure on the vertical annulus.  Figure~\ref{fig:rect-tiling}
  shows the corresponding tiling mapped to the disk $\D$ using the
  embedding $j_{AD}$ (Definition~\ref{def:drawing-annuli}). The sides
  of the rectangles appear curved in the figure, but they are true
  Euclidean rectangles since the metric comes from that of the
  vertical annulus $\A$.
\end{example}

\begin{figure}
\begin{tikzpicture}	
  \begin{scope}[scale=0.04]
    \def\rp{2.5} \def\rd{1.5}
    \def\radA{15} \def\radB{23} \def\radC{{sqrt(1009)}} 
    \def\radD{81} \def\radE{100} \def\radF{110}
    \coordinate (cva) at (\anga:\radB);
    \coordinate (cvb) at (\angb:\radC);
    \coordinate (cvc) at (\angc:\radD);
    \coordinate (cvd) at (\angd:\radC);
    \draw[fill=bckgrd] (0,0) circle (\radE);
    \draw[thick,black] (0,0) circle (\radE);
    \foreach \x/\y in {D/\radD,C/\radC,B/\radB}{\draw[thick,c\x] (0,0) circle (\y);}
    \draw[thick,cA,fill=white] (0,0) circle (\radA);
    \foreach \x/\y in {ca/\anga,cb/\angb,cc/\angc,cd/\angd}{
      \draw[thick,\x] (\y:\radA) to (\y:\radE);}
    \foreach \x in {a,b,c,d}{
      \draw [thick,color=c\x,fill=c\x!70] (cv\x) circle (\rp);
    }
  \end{scope}
  \end{tikzpicture}
  \caption{A metric rectangular cell complex on the annulus induced by
    four points and then drawn in the disk.  The images of the four
    original points are drawn as large dots.}\label{fig:rect-tiling}
\end{figure}
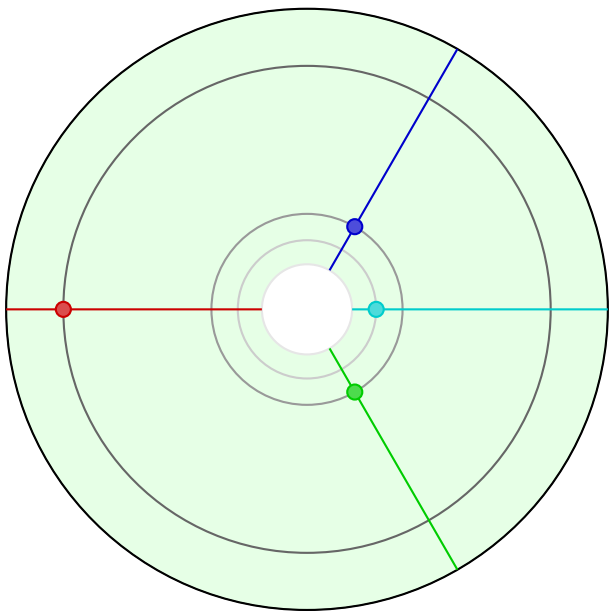

\section{Polynomials with Distinct Roots}\label{sec:poly-roots}

This section records basic facts about the roots, critical points and
critical values of a complex polynomial.

\begin{defn}[$3$ sets]\label{def:3-sets}
  Let $\poly_d(\C) \subset \C[z]$ denote the subset of all polynomials
  of degree $d$ in $\C[z]$ with $d \in \N$.  A \emph{complex
    polynomial of degree $d$} is conventionally written in additive
  form using $d+1$ coefficients: $p(z) = c_d z^d + \cdots c_1 z^1 +
  c_0$, where the \emph{leading coefficient} $c_d$ is nonzero and all
  of the others are arbitrary, but the behavior of the polynomial map
  $p\colon \C \to\C$, sending $z$ to $p(z)$, is easier to analyze when
  the formula is written multiplicatively.  By the Fundamental Theorem
  of Algebra $p(z)$ can be factored completely: $p(z) = c_d
  (z-a_1)(z-a_2)\cdots (z-a_d)$ with $c_d\in \C_\zer$ and $a_i \in \C$
  for all $i\in \{1,2,\ldots,d\}$.  The elements of the set
  $\{a_1,a_2,\ldots,a_d\} \subset \C$ are the \emph{roots of $p$} and
  the number $c_d$ is its \emph{leading coefficient}.  The
  \emph{critical points of $p$} are the roots of its derivative and
  the \emph{critical values of $p$} are the images of its critical
  points.  In symbols:
  \[
  \begin{array}{ccccl}
    \rts &=& \roots(p) &=& \{ a \in \C \mid p(a) = 0\}\\
    \cpt &=& \critpts(p) &=& \{ b\in \C \mid p'(b)=0 \}\\
    \cvl &=& \critvals(p) &=& \{c \in \C \mid c=p(b) , b \in \cpt\}\\
  \end{array} 
  \]
\end{defn}

As a mnemonic device, we tend to use $a$'s, $b$'s and $c$'s when
naming individual roots, critical points and critical values,
respectively.  Note that $\rts$ and $\cpt$ are subsets in the domain
and $\cvl$ is a subset in the range.  Moreover, $p(\rts) = \zer$,
$\rts = p^{-1}(\zer)$, $p(\cpt) = \cvl$ and $\cpt \subset
p^{-1}(\cvl)$.  The map $p$ restricts to a map $p_\zer \colon \C_\rts
\to \C_\zer$.  When the roots of $p$ are distinct, $\rts$ has size $d$
and the polynomial can be recovered from its set of roots and its
leading coefficient.  We say `set' rather than `list' since the
ordering of the linear factors is clearly irrelevant. An arbitrary
polynomial can be recovered when one records the multiplicity of each
root.

\begin{defn}[Multisets]\label{def:multisets}
  A \emph{multiset} is a set $S$ together with a \emph{multiplicity
    function} $m\colon S \to \N$.  The number $m(s)$ is the
  \emph{multiplicity of $s\in S$}.  The multiplicity function $m$ is
  usually left implicit and a multiset is named after its underlyng
  set.  A finite multiset can be concisely described using the
  notation $S = \{s_1^{m_1},s_2^{m_2},\ldots, s_k^{m_k}\}$ where the
  underlying set is $S = \{s_1,\ldots,s_k\}$, with $s_i = s_j$ if and
  only if $i=j$, and where the exponent $m_i = m(s_i)$ denotes the
  multiplicity of the element $s_i$.  The \emph{size} of the multiset
  $S$ is the sum of its multiplicities: $d = \sum_i m_i$, but the
  \emph{size} of the (underlying) set $S$ is simply $k$. Note that $d
  >k$ unless every element has multiplicity $1$.  The collection of
  all multisets of complex numbers of size~$d$ is denoted
  $\multi_d(\C)$.
\end{defn}

\begin{defn}[$3$ multisets]\label{def:3-multisets}
  Let $p \in \poly_d(\C)$ be a polynomial of degree $d$.  The roots,
  critical points and critical values of $p$ can be converted from
  sets to multisets by specifying their multiplicity functions.  The
  multiplicity of a root $a \in \rts$ is the number of times that
  $(z-a)$ occurs as a linear factor in the complete factorization of
  $p(z)$.  The multiplicity function of a critical point $b \in \cpt$
  is the number of times that $(z-b)$ occurs as a linear factor in the
  complete factorization of $p'(z)$.  Finally, the multiplicity of a
  critical value $c\in \cvl$ is the sum of the multiplicities of the
  critical points sent to $c$, i.e. $m(c) = \sum_{b\in \cpt_c} m(b)$,
  where $\cpt_c = \{b \in \cpt \mid p(b)=c\}$.
\end{defn}

For a generic polynomial all three of these multisets are sets.  The
polynomial in Example~\ref{ex:3-ms-generic} is our standard example
that is used throughout the article.

\begin{example}[$3$ sets: generic case]\label{ex:3-ms-generic} 
  For $p(z) = .02(3z^5 - 15z^4 + 20z^3 - 30z^2 + 45z)$, all three multisets
  are sets.  It has a set of five distinct roots $\rts =
  \{a_1,a_2,a_3,a_4,a_5\}$ that we have indexed at random.  Three of
  the roots are real ($a_1 =0$, $a_2 \approx 1.7944$ and $a_3 \approx
  3.5972$), and there is one complex conjugate pair ($a_4 = x + yi$
  and $a_5 = x- yi$ where $x \approx -0.1958$ and $y \approx 1.5117$).
  The critical points and critical values are easier to describe
  because the derivative factors as $p'(z) = .3(z-1)(z-3)(z^2+1)$.  It
  has a set of four distinct critical points $\cpt =\{1, 3, \pm i\}$
  and a set of four distinct critical values $\cvl = \{.46, -1.62, .3 \pm
  .56i\}$.  The $3$ multisets for this polynomial are shown in
  Figure~\ref{fig:3-ms-generic}.  Note that $\cvl$ is the set used in
  Example~\ref{ex:four-points}.
\end{example}

\begin{figure}
    \begin{tikzpicture}[scale=.8]
    \begin{scope}[xshift=-5cm,scale=1]
    \draw (-1.5,-2.5) rectangle (5,2.5);
    \draw[->] (-1,0)--(4.5,0);
    \draw[->] (0,-2)--(0,2);
    \coordinate (a0) at (0,0);
    \coordinate (a1) at (1.7944,0);
    \coordinate (a2) at (3.5972,0);
    \coordinate (a3) at (-0.1958,1.5117);
    \coordinate (a4) at (-0.1958,-1.5117);
    \foreach \x in {0,1,2,3,4}{\draw [ultra thick, fill=white] (a\x) circle [radius=0.1];}
                    
    \coordinate (b1) at (1,0);
    \coordinate (b2) at (3,0);
    \coordinate (b3) at (0,1);
    \coordinate (b4) at (0,-1);
    \foreach \x in {1,2,3,4} {\draw [ultra thick,fill=blue] (b\x) circle [radius=0.1];}
   
    \end{scope}
    
    \begin{scope}[xshift=5cm,scale=.04]
    \draw (-110,-60) rectangle (60,60);
    \draw[->] (-100,0)--(50,0);
    \draw[->] (0,-50)--(0,50);
    \coordinate (c0) at (23,0);
    \coordinate (c1) at (-81,0);
    \coordinate (c2) at (15,28);
    \coordinate (c3) at (15,-28);
    \foreach \x in {0,1,2,3}{\draw [ultra thick, fill=red] (c\x) circle [radius=2.5];}
    \end{scope}
    
    \end{tikzpicture}
    \caption{Three finite sets associated with the polynomial given in
      Example~\ref{ex:3-ms-generic}.  The domain, on the left,
      contains its five roots, shown as white dots, and its four
      critical points, shown as blue dots.  The range, on the right,
      contains its four critical values, shown as red dots.  The two
      images use different scales.}
    \label{fig:3-ms-generic}
\end{figure}
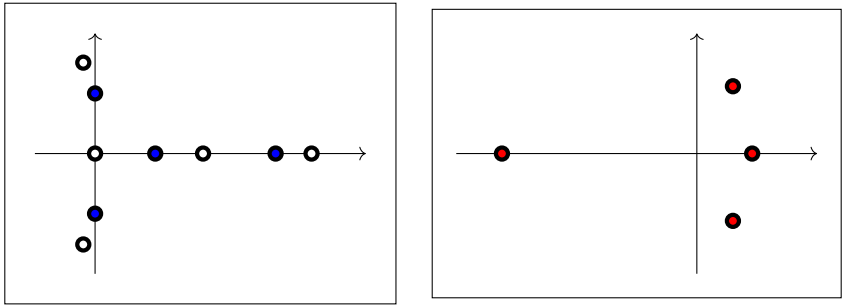

At the other extreme, there might be just one critical point and one
critical~value.

\begin{example}[$3$ multisets: special case]\label{ex:3-ms-special} 
  Let $p(z) = a(z-b)^d + c$ with $a \in \C_\zer$ and $b,c \in \C$.  If
  $n=d-1$, then the derivative is $p'(z) = a\cdot d (z-b)^n$.  The
  multiset $\cpt = \{b^n\}$; there is only one critical point of
  multiplicity $n$.  And since $p(b) = c$, the multiset $\cvl =
  \{c^n\}$; there is only one critical value, also of multiplicity
  $n$.  The roots of $p$ are complex numbers of the form $b$ plus a
  $d$-th root of $-c/a$, so the character of the roots depends on the
  value of $c$.  If $c=0$, then $\rts = \{b^d\}$; there is only one
  root of multiplicity $d = n+1$.  But if $c \neq 0$, then there $d$
  distinct roots equally spaced around a circle centered at $b$.
\end{example}

In fact, a polynomial with only one critical value must be one of
those listed in Example~\ref{ex:3-ms-special}.  We record some basic
facts about polynomials for later use.

\begin{prop}[Polynomials and roots]\label{prop:poly-rts}
  If $p$ is a complex polynomial of degree $d$, then its multiset of
  roots has size $d$.  Conversely, every polynomial $p$ can be
  uniquely reconstructed from its leading coefficient and its multiset
  of roots.  In particular, there is a natural bijection $\poly_d(\C)
  \cong \C_\zer \times \multi_d(\C)$.
\end{prop}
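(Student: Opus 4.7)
The plan is to derive Proposition~\ref{prop:poly-rts} directly from the Fundamental Theorem of Algebra, which has already been invoked in Definition~\ref{def:3-sets}, together with the uniqueness of factorization in $\C[z]$. The argument splits cleanly into three short steps: verify the size claim, verify that the recipe for $(\text{leading coeff},\text{root multiset})$ is injective, and assemble these into the bijection.

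First I would start from the complete factorization $p(z) = c_d(z-a_1)(z-a_2)\cdots(z-a_d)$ provided by the Fundamental Theorem of Algebra. Grouping equal factors gives $p(z) = c_d\prod_{i=1}^{k}(z-a_i)^{m_i}$, where the $a_i$ are now the distinct roots and $m_i$ records how many times $(z-a_i)$ appears. Counting linear factors on both sides yields $\sum_i m_i = d$, so the multiset $\rts$ with multiplicity function from Definition~\ref{def:3-multisets} is an element of $\multi_d(\C)$ in the sense of Definition~\ref{def:multisets}.

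Next I would check that $p$ determines the pair $(c_d,\rts)$. Since $\C[z]$ is a unique factorization domain and the linear polynomials $(z-a)$ for distinct $a\in\C$ are pairwise non-associate irreducibles, any two factorizations of $p$ into linear factors agree up to reordering. Hence the underlying set of roots $\{a_1,\ldots,a_k\}$ equals $p^{-1}(\zer)$ and the multiplicity $m(a)$ is recovered as the largest integer $n$ with $(z-a)^n \mid p(z)$; the leading coefficient is recovered as $c_d = \lim_{|z|\to\infty} p(z)/z^d$.

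Finally I would assemble the bijection. Define $\Phi\colon \C_\zer\times\multi_d(\C)\to\poly_d(\C)$ by
\[
\Phi\bigl(c,\{a_1^{m_1},\ldots,a_k^{m_k}\}\bigr) \;=\; c\prod_{i=1}^{k}(z-a_i)^{m_i}.
\]
Since $\sum m_i = d$ and $c\ne 0$, the output lies in $\poly_d(\C)$. The inverse map sends $p$ to $(c_d,\rts)$. The two preceding paragraphs show that these maps compose to the identity in both directions, which yields the asserted natural bijection.

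There is no real obstacle here; the content is entirely standard. The only thing worth monitoring is consistency of the multiset conventions, namely that an element of $\multi_d(\C)$ has \emph{total} size $d$ (sum of multiplicities), so that it corresponds precisely to the $d$ linear factors, not to the $k$ distinct roots.
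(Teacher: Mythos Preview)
Your proposal is correct and follows essentially the same approach as the paper: define the map $\Phi(c,\{a_1^{m_1},\ldots,a_k^{m_k}\}) = c\prod_i (z-a_i)^{m_i}$ and note that the Fundamental Theorem of Algebra together with unique factorization in $\C[z]$ gives the inverse. The paper's own proof is a one-liner that only writes down $\Phi$ and leaves the rest implicit, so your version is simply a more explicit rendering of the same argument.
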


\begin{proof}
  Given $(c_d,\{a_1^{m_1},\ldots,a_k^{m_k}\})$ in $\C_\zer \times
  \multi_d(\C)$ with $\sum_i m_i = d$, simply define $p(z) = c_d
  (z-a_1)^{m_1}\cdots(z-a_k)^{m_k}$.
\end{proof}

\begin{prop}[Polynomials and derivatives]\label{prop:poly-deriv}
  Every polynomial is determined by its derivative and its value at
  one point.  Explicitly, when $p \in \C[z]$ be a polynomial with
  $p(b) =c$, $p(z) = \int_b^z p'(w) dw + c$ and, in the special case
  where $a$ is a root, this simplifies to $p(z) = \int_a^z p'(w) dw$.
\end{prop}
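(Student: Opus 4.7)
The plan is to reduce the statement to the fundamental theorem of calculus applied to holomorphic functions. Since any polynomial $p \in \C[z]$ is entire, so is its derivative $p'$, and the two assertions in the statement are essentially computations with the antiderivative.

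First I would recall that for an entire function $f$, the complex line integral $\int_\gamma f(w)\,dw$ depends only on the endpoints of the path $\gamma$; this is standard and follows from Cauchy's theorem, or, more elementarily in the polynomial case, from the fact that $f$ has an antiderivative given term-by-term. This makes the notation $\int_b^z p'(w)\,dw$ unambiguous for any $b, z \in \C$.

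Next, define $q(z) = \int_b^z p'(w)\,dw + c$. By construction $q(b) = c = p(b)$, and differentiating under the integral (or invoking FTC for the complex setting) yields $q'(z) = p'(z)$. Hence $(p - q)' \equiv 0$, so $p - q$ is a constant, and evaluating at $z = b$ shows that constant is $0$. Thus $p(z) = q(z) = \int_b^z p'(w)\,dw + c$, which is the first formula. For the second formula, specialize to the case when $b = a$ is a root, so $c = p(a) = 0$, and the expression collapses to $p(z) = \int_a^z p'(w)\,dw$.

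There is no serious obstacle here: the only thing to be careful about is making sure the complex line integral is well-defined, which is immediate because $p'$ is entire. Everything else is a direct application of the fundamental theorem of calculus in the complex setting.
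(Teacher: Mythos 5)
Your proof is correct and takes essentially the same approach as the paper, which simply evaluates $\int_b^z p'(w)\,dw + c = p(z) - p(b) + c$ via the fundamental theorem of calculus and substitutes $p(b)=c$. Your version spells out the same FTC argument slightly more carefully (via the vanishing derivative of the difference), but it is the same proof.
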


\begin{proof}
  For any $b \in \C$, the integral $\int_b^z p'(w) dw +c = p(z) - p(b)
  + c$, which is equal to $p(z)$ when $p(b)=c$.
\end{proof}

\begin{prop}[Polynomials and critical points]\label{prop:poly-cpt}
  Every polynomial can be reconstructed from its leading coefficient,
  its multiset of critical points and its value at one point.
\end{prop}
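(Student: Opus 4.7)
The plan is to chain together the two previous propositions. Given the leading coefficient $c_d$, the multiset of critical points $\cpt = \{b_1^{n_1},\ldots,b_k^{n_k}\}$, and the value $p(b)=c$ at a specified point $b$, the strategy is first to reconstruct $p'$ from the critical-point data, and then to reconstruct $p$ from $p'$ and the point-value data.

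First I would recover the degree $d$. The size of the multiset $\cpt$ equals $\sum_i n_i = d-1$, since $p'$ has degree $d-1$ and (by Definition~\ref{def:3-multisets}) its roots, counted with multiplicity, are exactly the critical points of $p$ counted with multiplicity. Hence $d$ is determined as one more than the size of $\cpt$, and the leading coefficient of $p'$ is $d\cdot c_d$, which is nonzero since $c_d \neq 0$. Applying Proposition~\ref{prop:poly-rts} to $p'$, with leading coefficient $d\cdot c_d$ and multiset of roots $\cpt$, uniquely reconstructs
\[
p'(z) \;=\; d\cdot c_d\,(z-b_1)^{n_1}\cdots(z-b_k)^{n_k}.
\]

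Second, with $p'$ now known and $p(b)=c$ given, Proposition~\ref{prop:poly-deriv} gives the explicit antiderivative
\[
p(z) \;=\; \int_b^z p'(w)\,dw + c,
\]
which uniquely determines $p$. Composing these two reconstructions shows that the triple (leading coefficient, multiset of critical points, value at one point) determines $p$, as desired.

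The only subtle point, and the one I would be careful about, is making sure the input data are internally consistent enough to identify $d$ before invoking Proposition~\ref{prop:poly-rts}; this is handled by reading $d$ off of $|\cpt|+1$. Everything else is a direct appeal to the two preceding propositions, so no genuine obstacle is anticipated.
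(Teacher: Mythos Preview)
Your argument is correct and follows essentially the same route as the paper: reconstruct $p'$ from the leading coefficient and the multiset of critical points via Proposition~\ref{prop:poly-rts}, then recover $p$ from $p'$ and the given value via Proposition~\ref{prop:poly-deriv}. You are simply more explicit than the paper about first reading off the degree $d = |\cpt|+1$ in order to pin down the leading coefficient $d\cdot c_d$ of $p'$.
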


\begin{proof}
  The leading coefficient of $p$ and the leading coefficient of $p'$
  are closely related, in the sense that each determines the other 
  and the critical points of $p$ are the roots of $p'$.  Thus, $p'$ 
  can be reconstructed from the leading coefficient of $p$ and the 
  multiset of critical points of $p$ 
  (Proposition~\ref{prop:poly-rts}), and then $p$ can be 
  reconstructed from $p'$ and the value of $p$ at one point 
  (Proposition~\ref{prop:poly-deriv}).
\end{proof}

\begin{rem}[Polynomials and critical values]\label{rem:poly-cvl}
  Every multiset of size $n$ of complex numbers can be realized as the
  critical values of a polynomial of degree $d = n+1$, but the proof
  is somewhat indirect, and the map from polynomials to critical
  values is finite-to-one, adding to the ambiguity \cite{beardon02}.
  In particular, it is fairly difficult to explicitly construct one of
  the finitely many polynomials have a specified multiset as its
  multiset of critical values.  
\end{rem}

Polynomials with distinct roots have many characterizations.

\begin{prop}[Distinct roots]\label{prop:distinct-roots}
  For a polynomial $p \in \poly_d(\C)$ the following are equivalent:
  $(1)$ $p$ has $d$ distinct roots, $(2)$ the roots and critical
  points of $p$ are disjoint multisets, $(3)$ $0$ is not a critical
  value of $p$, i.e. $\cvl \subset \C_\zer$.
\end{prop}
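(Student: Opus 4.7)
The plan is to prove the three statements equivalent by a short cycle $(1)\Rightarrow(2)\Rightarrow(3)\Rightarrow(1)$, using only the multiplicative factorization of $p$ afforded by the Fundamental Theorem of Algebra and the product rule for differentiation.

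For $(1)\Rightarrow(2)$ I would write $p(z)=c_d\prod_{i=1}^d(z-a_i)$ with the $a_i$ pairwise distinct, and compute by the product rule that $p'(a_k)=c_d\prod_{i\neq k}(a_k-a_i)$, which is a product of nonzero complex numbers and is therefore nonzero. Hence no root of $p$ is a critical point, so $\rts\cap\cpt=\emptyset$ as multisets. For $(2)\Rightarrow(3)$ I would argue by contraposition: if $0\in\cvl$, then $p(b)=0$ for some critical point $b\in\cpt$, so $b$ is simultaneously a root and a critical point, contradicting disjointness.

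For $(3)\Rightarrow(1)$ I would again argue by contraposition. Suppose $p$ does not have $d$ distinct roots, so some root $a$ has multiplicity $m\geq 2$ in the factorization $p(z)=(z-a)^m q(z)$. Differentiating gives $p'(z)=m(z-a)^{m-1}q(z)+(z-a)^m q'(z)$, which still has $(z-a)$ as a factor since $m-1\geq 1$, so $p'(a)=0$. Thus $a\in\cpt$, and then $p(a)=0$ lies in $\cvl$, contradicting $\cvl\subset\C_\zer$.

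There is no real obstacle here; the argument is a routine application of the product rule together with the correspondence between root multiplicity and divisibility of $p'$. The only point worth stating carefully is the passage from multisets to sets in $(1)\Leftrightarrow$ ``each root has multiplicity $1$'', which follows directly from Definition~\ref{def:3-multisets} and the fact that the multiset $\rts$ has total size $d$ by Proposition~\ref{prop:poly-rts}.
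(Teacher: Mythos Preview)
Your proof is correct and takes essentially the same approach as the paper's: both arguments hinge on the elementary fact that a root $a$ has multiplicity at least $2$ if and only if $p'(a)=0$, and both handle $(2)\Leftrightarrow(3)$ by the obvious contrapositive. The only cosmetic differences are that the paper organizes the logic as two biconditionals $(1)\Leftrightarrow(2)$ and $(2)\Leftrightarrow(3)$ rather than a cycle, and computes $p'(a)$ via the difference quotient $p'(a)=\lim_{z\to a}\frac{p(z)-p(a)}{z-a}=q(a)$ after writing $p(z)=(z-a)q(z)$, whereas you use the product rule on the full factorization.
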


\begin{proof}
  If $a \in \rts$ is a root, then $p(a)=0$, and $p(z) = (z-a) q(z)$
  for some polynomial $q(z)$.  In particular, $p'(a) = \lim_{z\to a}
  \frac{p(z)-p(a)}{z-a} = \lim_{z\to a} q(z) = q(a)$.  Thus $q(a)=0$
  if and only if $p'(a)=0$, which means that $p$ has a multiple root
  at $a$ if and only if $a$ is both a root and a critical point of
  $p$.  Thus $(1)$ and $(2)$ are equivalent.  When there is an element
  that is both a root and a critical point, $\rts \cap \cpt$ is not
  empty and $p(\rts \cap \cpt) \subset p(\rts) \cap p(\cpt) = \zer
  \cap \cvl$ is also not empty, so not $(2)$ implies not $(3)$.
  Conversely, when $0$ is an element of $\cvl$, there is an element
  $b\in \cpt$ such $p(b)=0$, so $b$ is also a root, and not $(3)$
  implies not $(2)$.
\end{proof}

When critical values exist, they create a cell structure on the
annulus.

\begin{defn}[Critical values and rectangles]\label{def:crit-val-cplx}
    Let $p$ be a complex polynomial and let $\cvl$ be its set of
    critical values. If $p$ has distinct roots and degree at least
    $2$, then $\cvl$ is a non-empty finite subset of $\C_\zer$
    (Proposition~\ref{prop:distinct-roots}). As described in
    Definition~\ref{def:annulus}, this subset of $\C_\zer$ is sent by
    $i_A$ to a non-empty subset of $\A^\inter$.  By
    Definition~\ref{def:rectangles}, this subset determines cell
    structures for $\I$, $\S$ and $\A = \S \times I$, which we denote
    $\Ib_p$, $\Sb_p$ and $\Ab_p$, respectively.
\end{defn}

\section{Polynomials and Branched Annuli}\label{sec:metric-cells}

This section defines our main object of study: the branched 
annulus of a polynomial with distinct roots. 
We begin by recalling the topology of polynomial maps.

\begin{prop}[Polynomial maps]\label{prop:poly-maps}
  For every polynomial $p \in \poly_d(\C)$ , the map $p\colon \C \to
  \C$ is a local homeomorphism away from $\cpt$, and every point not
  in $\cvl$ has an evenly covered neighborhood.  In particular, if $B
  \subset \C$ is any set containing $\cvl$, then $p$ restricts to a
  $d$-sheeted covering $\C_C \to \C_B$, where $C = p^{-1}(B)$.
\end{prop}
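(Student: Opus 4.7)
The plan is to establish the statement in three stages, corresponding to the three assertions.

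First I would verify that $p$ is a local homeomorphism on $\C \setminus \cpt$. At any $z_0$ with $p'(z_0) \neq 0$, the holomorphic inverse function theorem supplies open neighborhoods $V$ of $z_0$ and $U$ of $p(z_0)$ such that $p|_V\colon V \to U$ is a biholomorphism and in particular a homeomorphism. (Equivalently, one can observe that $p(z) - p(z_0) = (z - z_0) g(z)$ with $g(z_0) = p'(z_0) \neq 0$, so the map is locally modeled on multiplication by a nonvanishing function.)

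Second I would construct, for each $w \in \C \setminus \cvl$, an evenly covered open neighborhood. By the Fundamental Theorem of Algebra the preimage $p^{-1}(w)$ has exactly $d$ points when counted with multiplicity; since $w \notin \cvl$, none of these preimages lies in $\cpt$ (otherwise $w$ would be a critical value), so each occurs with multiplicity one. Enumerate them as $z_1, \ldots, z_d$. By the first stage I obtain pairwise disjoint open neighborhoods $V_i$ of $z_i$ on which $p$ restricts to a homeomorphism onto an open set $p(V_i)$. The crucial step, which I expect to be the main technical obstacle, is to shrink down to a single $U \ni w$ whose full preimage equals $\bigsqcup_i (V_i \cap p^{-1}(U))$; for this I would invoke properness of $p$, which follows from $|p(z)| \to \infty$ as $|z|\to\infty$. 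Properness guarantees that if no such $U$ existed, a sequence $w_n \to w$ would have preimages $z_n^*$ staying in a compact set yet lying outside $\bigsqcup V_i$, giving by subsequence extraction a limit point in $p^{-1}(w) \setminus \{z_1,\ldots,z_d\}$, a contradiction. After this shrinking, $p^{-1}(U) = \bigsqcup_i W_i$ with $p|_{W_i}\colon W_i \to U$ a homeomorphism, so $U$ is evenly covered.

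Third I would assemble the covering statement. Fix any $B \subset \C$ with $B \supset \cvl$ and set $C = p^{-1}(B)$. The restriction $p\colon \C_C \to \C_B$ is well defined, because $z \in \C_C$ is equivalent to $p(z) \notin B$, that is, $p(z) \in \C_B$; and it is surjective since $p\colon \C \to \C$ is. Every $w \in \C_B$ satisfies $w \notin \cvl$, so by the second stage it admits an evenly covered neighborhood $U \subset \C_B$ with $p^{-1}(U) = \bigsqcup_{i=1}^d W_i$; because $U \cap B = \emptyset$ each $W_i$ is automatically contained in $\C_C$, so the same decomposition witnesses even covering for the restricted map. This shows $p\colon \C_C \to \C_B$ is a covering, and the fiber cardinality $d$ (computed at any $w \in \C_B$) identifies it as $d$-sheeted.
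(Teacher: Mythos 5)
Your proposal is correct, and it fills in the standard argument that the paper itself omits: Proposition~\ref{prop:poly-maps} is stated there without proof, as recalled background on branched covers, and your three-stage argument (inverse function theorem, properness to produce evenly covered neighborhoods of points outside $\cvl$, then restriction to $\C_C \to \C_B$) is exactly the standard route the authors are implicitly relying on. The only point worth tightening is in the second stage: after shrinking $U$ so that $p^{-1}(U) \subset \bigsqcup_i V_i$, you should also shrink $U$ into $\bigcap_i p(V_i)$ (or shrink it off the finite set $\cvl$ and count preimages) to guarantee that each $W_i = V_i \cap p^{-1}(U)$ maps \emph{onto} $U$; this is routine and does not affect the correctness of the approach.
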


In other words, polynomial maps are branched covering maps 
with finitely many branch points. It is helpful to study
this branching behavior using the graph of the modulus function.

\begin{defn}[Modulus surface]\label{def:modulus-surface}
  Let $p \in \poly_d(\C)$ be a polynomial and let $p\colon \C \to \C$
  be the corresponding polynomial map.  The \emph{modulus surface of
    $p$} is the graph of the function $\size{p} \colon \C \to \R$
  which sends $z \mapsto \size{p(z)}$.  This is a surface in $\C
  \times \R$, formed by the points $\{(z,\size{p(z)}) \mid z\in \C\}$,
  and it is smooth except possibly near the roots.  Near a root with
  multiplicity $1$, the modulus surface looks like a cone. The roots
  live in the plane $\C \times \zer$ and the remainder of the modulus
  surface is in the half space above this plane. The graph of the
  restricted function $\size{p_\zer}\colon \C_\rts \to \R$, is the
  \emph{restricted modulus surface} and its level sets are called
  \emph{lemniscates}.
\end{defn}

\begin{rem}[Compactifying the modulus surface]\label{rem:cpt-modulus-surface}
  One inconvenience when studying the restricted modulus surface is
  that both the domain and range are non-compact.  This can be fixed
  by homeomorphically mapping $\C_\rts$ into a compact disk $\D$,
  mapping $\R$ into an compact interval $\I$, and then metrically
  completing the resulting bounded surface contained inside the solid
  cylinder $\D \times \I$.
\end{rem}

The branched annulus introduced here is an alternative way to
compactify the restricted modulus surface. The first step is to define a
bounded metric on $\C_\rts$.

 \begin{defn}[Pullback metrics]\label{def:pullback-metrics}
   If $Y$ is a topological surface, $X$ is a Riemannian surface, and
   $f\colon Y \to X$ is a branched cover with a finite branch locus,
   then there is a unique \emph{pullback metric} on $Y$ that makes $f$
   a local isometry wherever it is a local homeomorphism.  Concretely,
   call a curve in $Y$ \emph{rectifiable} if its image under $f$ is
   rectifiable in $X$, define the length of a rectifiable curve in $Y$
   by the length of its image in $X$ and define the distance between
   points in $Y$ to be the infimum of the lengths of rectifiable
   curves connecting them.  This defines a metric on $Y$ that makes
   $f$ a local isometry in any neighborhood where $f$ was already a
   local homeomorphism, while also providing equal treatment to all
   points in $Y$.  Its uniqueness follows from the local isometry
   requirement.
 \end{defn}

Note that pullback metrics are distance non-increasing as an
immediate consequence of their definition.

\begin{defn}[Branched annulus]\label{def:brannulus-metric}
  Let $p \in \poly_d(\C)$ be a polynomial with distinct roots.  The
  \emph{open branched annulus of the polynomial $p$} is the
  topological space $\C_\rts$ endowed with a pullback metric via the
  map $i_A \circ p_\zer \colon \C_\rts \to \A$, and the (closed)
  \emph{branched annulus} is the metric completion of the open
  branched annulus.  We write $\B_p^\inter$ for the open branched
  annulus, $\B_p$ for the branched annulus, and $i_B \colon \C_\rts
  \to \B_p$ for the natural inclusion map which restricts to a
  homeomorphism between $\C_\rts$ and $\B_p^\inter$.  Finally, since
  the map $i_A \circ p_\zer \circ i_B^{-1} \colon \B_p^\inter \to \A$
  is distance non-increasing, it sends Cauchy sequences to Cauchy
  sequences, and thus it continuously extends to a map $p_A \colon
  \B_p \to \A$. See Figure~\ref{fig:poly-maps}.
  \end{defn}
  
\begin{figure}
\begin{tikzcd}
\C  \arrow[d,"p"] \arrow[r,hookleftarrow] &
 \C_\rts \arrow[d, "p_\zer"]  \arrow[r,hookrightarrow,"i_B"] & 
  \B_p \arrow[r,equal] \arrow[d,"p_A"] & 
  \Bb_p \arrow[d,"\pb"] \\
  \C \arrow[r,hookleftarrow] &
  \C_\zer \arrow[r,hookrightarrow,"i_A"] &  
  \A \arrow[r,equal] &
    \Ab_p 
\end{tikzcd}
\caption{Maps related to the complex polynomial $p$.  The four
  vertical maps, from left to right, are the original polynomial map
  $p$, which is a branched cover of $\C$, the restricted polynomial
  map $p_0$, which is a branched cover of $\C_\zer$, the compact
  metric version $p_A$, which is a branched cover of the annulus $\A$,
  and the metric cellular version $\pb$, which is a branched
  metric cellular map between rectangular
  $2$-complexes. \label{fig:poly-maps}}
\end{figure}

It turns out that the branched annulus $\B_p$ is always a compact
surface with boundary whose interior is the open branched annulus
$\B_p^\inter$, hence our choice of notation.  These properties are
established later in the section.  For now we merely record the
elementary properties of the open branched annulus $\B_p^\inter$.
  
\begin{prop}[Open branched annulus]\label{prop:open-brannulus}
  For any polynomial $p \in \poly_d(\C)$ with distinct roots, the open
  branched annulus $\B_p^\inter$ is a connected genus $0$ surface that
  is locally Euclidean away from a finite set of critical points. It
  has area $4\pi d$ and its diameter is bounded.
\end{prop}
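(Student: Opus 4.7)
The plan is to verify each of the four assertions in turn, using the definition of $\B_p^\inter$ as the topological space $\C_\rts$ endowed with the pullback metric along $i_A\circ p_\zer$, together with Proposition~\ref{prop:poly-maps}, which guarantees that $p_\zer\colon\C_\rts\to\C_\zer$ is a local homeomorphism away from the finite set $\cpt$.

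First, as a topological space $\B_p^\inter=\C_\rts$ is the plane with $d$ distinct points removed, hence a connected open subset of $\R^2$, and therefore a connected orientable surface of genus zero (explicitly, $\S^2$ with $d+1$ points removed). Since $i_A$ is a homeomorphism onto $\A^\inter$ with its Euclidean product metric from $\Sb\times\Ib^\inter$, and since the pullback metric is defined precisely so that $i_A\circ p_\zer$ becomes a local isometry wherever it is a local homeomorphism, $\B_p^\inter$ is locally Euclidean away from the finite set $\cpt$.

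For the area I would use that $p_\zer$ is $d$-to-one away from the finite set $p^{-1}(\cvl)$, and that $i_A$ is area-preserving onto $\A^\inter$ for the flat metric. Consequently $i_A\circ p_\zer$ is a $d$-to-one local isometry on a full-measure open subset of $\B_p^\inter$, and integrating the pulled-back area form gives $\mathrm{area}(\B_p^\inter)=d\cdot\mathrm{area}(\A^\inter)=4\pi d$, where $\A=\Sb\times\Ib$ is the Euclidean product of a circle of circumference $2\pi$ with an interval of length $2$.

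The bounded-diameter claim is the main obstacle, since finite area does not by itself imply finite diameter. My plan is to fix a generic basepoint $\bar x_0\in\A^\inter\setminus\cvl$ whose preimage is a set $\{x_1,\dots,x_d\}\subset\B_p^\inter$, and define $M$ to be the maximum, over the finitely many pairs $(i,j)$, of the infimum of lengths of piecewise smooth paths in $\B_p^\inter$ from $x_i$ to $x_j$. Such paths exist by path-connectivity of $\C_\rts$, and since they can be chosen to lie inside a bounded region of $\C$ their images in $\A$ are rectifiable, so each pairwise infimum is finite and $M<\infty$. Given arbitrary $x,y\in\B_p^\inter$, I would then pick paths in $\A$ from $p_A(x)$ and $p_A(y)$ to $\bar x_0$ of length at most $\mathrm{diam}(\A)$, perturb them slightly to avoid the finite set $\cvl$ so that they lift uniquely through the covering map $p_A$, and splice the two lifts with a shortest-length path between the resulting endpoints in the fiber over $\bar x_0$. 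The resulting path from $x$ to $y$ has length at most $2\,\mathrm{diam}(\A)+M$, giving the desired finite (though $p$-dependent) bound on $\mathrm{diam}(\B_p^\inter)$.
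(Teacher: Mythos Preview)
Your proof is correct and follows essentially the same route as the paper's: same topological identification with $\C_\rts$, same local-isometry argument for the Euclidean structure, and the same $d$-sheeted covering count for the area. For the bounded-diameter claim the paper simply asserts the one-line fact that a connected finite-sheeted cover of a bounded-diameter space has bounded diameter, whereas you carefully write out the standard proof of that fact (basepoint, finite fiber distances, lift-and-splice) and handle the branch locus by perturbation; this is more explicit than the paper but not a different argument.
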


\begin{proof}
  The open branched annulus $\B_p^\inter$ is a connected genus $0$
  surface because its topology is the same as $\C_\rts$.  It is
  locally Euclidean almost everywhere since it is a branched cover of
  $\A$ with finite branch locus and $\A$ is locally Euclidean.  It has
  total area $4\pi d$ since the map is a local isometry and a
  $d$-sheeted covering of a space with area $4 \pi$, once finitely
  many points have been removed from the domain and range
  (Propostion~\ref{prop:poly-maps}).  And finally, a connected
  finite-sheeted cover of a bounded diameter metric space, has bounded
  diameter.
\end{proof}
 
To understand the metric completion of an open branched annulus, note
that a Cauchy sequence with missing limit point in the open branched
annulus $\B_p^\inter$ must approach the ``boundary'' of the underlying
topological space $\C_\rts$.  In other words, such a sequence either
approaches a root or it heads off to infinity (in the old metric).
The image of such a sequence under the map $p_A$ approaches
$\partial \A = \lat_{-1} \cup \lat_1$.

\begin{rem}[Near a root]\label{rem:near-a-root}
  Let $p\in \poly_d(\C)$ be a polynomial with distinct roots.  Since
  $0$ is not a critical value (Proposition~\ref{prop:distinct-roots}),
  there is a small neighborhood $C$ of $0$ that is evenly covered by
  $p$ (Proposition~\ref{prop:poly-maps}).  Let $a \in \rts$ be a root
  and let $B$ be the connected component of $p^{-1}(C)$ containing
  $a$.  Since the map $i_A \circ p_\zer$ sends the deleted
  neighborhood $B \setminus \{a\}$ homeomorphically to the lower
  portion of the open annulus $\A^\inter$, the metric completion of
  $\B_p^\inter$ in the neighborhood of the root $a$ involves adding a
  copy of the boundary circle $\lat_{-1}$, as happens when metrically
  completing the lower part of $\A^\inter$ to get $\A$.  In
  particular, $d$ distinct circles of length $2\pi$ are added to the
  open branched annulus, one for each of the $d$ roots in $\rts$, and
  in each case, the portion of $\B_p^\inter$ near this boundary circle
  is the interior of $\B_p$ near this circle.  We call these boundary
  circles \emph{root circles}.
\end{rem}

\begin{rem}[Near infinity]\label{rem:near-infinity}
  Let $p\in \poly_d(\C)$ be a polynomial with distinct roots and note
  that the points in $\C$ that are ``near infinity'' are sent by $i_A
  \circ p_\zer$ to points that are near the boundary circle $\lat_1
  \in \A$.  If $A$ is an open neighborhood of $\lat_1 \in \A$ that is
  disjoint from $i_A(\cvl)$, then the restriction of $i_A \circ
  p_\zer$ to the preimage of $A \cap \A^\inter$ is a $d$-sheeted cover
  of this annular region (Proposition~\ref{prop:poly-maps}).  It is
  also path-connected since it contain all points in $\C$ sufficiently
  far from the origin.  The only possibility is that $(i_A \circ
  p_\zer)^{-1}(A \cap \A^\inter)$ is an open topological annulus that
  is sent to the open annulus $A \cap \A^\inter$ by a map that wraps
  around $d$ times.  As a consequence, a single circle $\S(d)$, of
  length $2 \pi d$, is added to $\B_p$ ``near infinity'' and the
  portion of $\B_p^\inter$ near this boundary circle is the interior
  of $\B_p$ near this circle.  We call this the \emph{circle at
    infinity}, since it is reminiscent of the standard
  compactification of the complex plane.  Note that this agrees with
  the intuition that polynomials near infinity are increasingly
  dominated by their leading term.
\end{rem}

The following proposition records the consequences of these remarks.

\begin{prop}[Branched annulus]\label{prop:brannulus}
  For any polynomial $p \in \poly_d(\C)$ with distinct roots, the
  branched annulus $\B_p$ is a compact connected genus $0$ surface
  with boundary and the map $p_A \colon \B_p \to \A$ is a branched
  cover with a finite set of branch points in its interior.  The
  interior of the surface $\B_p$ is the open branched annulus
  $\B_p^\inter$ and its boundary consists of $d+1$ circles.  There are
  exactly $d$ circles of length $2\pi$ that are all sent to lower
  boundary circle $\lat_{-1}$ in $\A$ and one circle $\S(d)$ of length
  $2\pi d$ that is sent to the upper boundary circle $\lat_1$ in $\A$.
\end{prop}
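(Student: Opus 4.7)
The plan is to assemble Proposition~\ref{prop:open-brannulus} together with Remarks~\ref{rem:near-a-root} and~\ref{rem:near-infinity}, which have already done the local analysis; the main task is then to verify that these local pictures account for \emph{all} of the completion and that gluing them to $\B_p^\inter$ produces a compact surface with boundary of the claimed form.

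First I would identify the ``boundary'' points of $\B_p^\inter$, that is, the limit points in $\B_p \setminus \B_p^\inter$. Any such point is represented by a Cauchy sequence $(x_n) \subset \B_p^\inter$ with no limit in $\B_p^\inter$. Its image $(i_A \circ p_\zer \circ i_B^{-1})(x_n)$ is Cauchy in $\A$, hence converges to a point of $\A$; since $\A^\inter$ is evenly covered outside the image of $\cvl$, a convergent image sequence landing in $\A^\inter \setminus i_A(\cvl)$ would lift to a convergent sequence in $\B_p^\inter$, contradicting our assumption. So the image limit must lie on $\partial \A = \lat_{-1} \cup \lat_1$. Via the homeomorphism $i_B$, this means $(x_n)$ corresponds to a sequence in $\C_\rts$ that either approaches a root in $\rts$ (limit on $\lat_{-1}$) or escapes to infinity (limit on $\lat_1$).

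Next I would assemble the two cases. Applying Remark~\ref{rem:near-a-root} to each root $a \in \rts$ shows that completion in a punctured neighborhood of $a$ attaches a single boundary circle of length $2\pi$, mapping isometrically to $\lat_{-1}$; since the roots are distinct, these $d$ root circles are disjoint. Applying Remark~\ref{rem:near-infinity} shows that completion near infinity attaches one boundary circle isometric to $\S(d)$ of length $2\pi d$, mapping to $\lat_1$ by the $d$-fold covering. These three types of neighborhoods (interior Euclidean or branched, near a root, near infinity) cover all of $\B_p^\inter$ together with its newly added boundary circles, and each attached circle has a collar inside $\B_p$, so $\B_p$ is a topological surface whose boundary consists of exactly $d+1$ circles, with interior $\B_p^\inter$.

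Finally, I would check compactness and finish the structural claims. Since $\B_p^\inter$ has bounded diameter (Proposition~\ref{prop:open-brannulus}) and its metric completion adds only finitely many bounded boundary circles, $\B_p$ is complete and bounded; combined with local compactness on the interior (locally Euclidean except at finitely many critical points) and on the collars of the boundary circles just constructed, one gets that every sequence has a convergent subsequence, hence $\B_p$ is compact. Connectedness and genus $0$ descend from $\B_p^\inter$ since attaching boundary circles via collars preserves both. The continuous extension $p_A \colon \B_p \to \A$ already exists by Definition~\ref{def:brannulus-metric}, and it is a branched cover with branch locus contained in the finite set indexed by $\cpt$, because away from the preimages of $\cvl \cup \partial \A$ the restriction of $i_A \circ p_\zer$ is a local homeomorphism by Proposition~\ref{prop:poly-maps}, and the boundary behavior was just determined. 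The main technical step is the very first one, ruling out Cauchy sequences whose images cluster in the interior of $\A$; the rest is bookkeeping on top of the remarks.
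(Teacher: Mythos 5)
Your proposal matches the paper's approach: the paper offers no formal proof of this proposition at all, stating only that it ``records the consequences'' of Remarks~\ref{rem:near-a-root} and~\ref{rem:near-infinity} after observing that a Cauchy sequence with missing limit must approach a root or head off to infinity. You have simply made that assembly explicit, which is the right thing to do.

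One small point to tighten: your exclusion of interior limit points only treats image sequences converging in $\A^\inter \setminus i_A(\cvl)$, so the conclusion ``the image limit must lie on $\partial\A$'' skips the case where the images converge to a point of $i_A(\cvl)$. Even covering is unavailable there, but the case is handled by properness: the preimage under $p$ of a small closed ball around a critical value (which misses $0$) is a compact subset of $\C_\rts$, and since $i_B$ is a homeomorphism onto $\B_p^\inter$ this set is compact in the pullback metric, so a Cauchy sequence eventually trapped in it converges in $\B_p^\inter$. With that case added, the argument is complete; the paper itself elides this point as well.
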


This nearly completes the proof of
Theorem~\ref{thma:metric-planar-complex}.  Since locally Euclidean
implies locally $\cat(0)$, it only remains to show that the branched
annulus is also locally $\cat(0)$ in the neighborhood of a branch
point.  This is straightforward to show directly and even more clear
once we introduce a cell structure.

\begin{defn}[Cell structure]\label{def:cellular-branched-annulus}
  Let $p \in \poly_d(\C)$ be a complex polynomial with distinct roots
  and let $p_A \colon \B_p \to \A$ be corresponding branched cover of
  the annulus $\A$.  If the degree $d$ of $p$ is at least $2$, then
  $p$ has at least one critical value and $i_A(\cvl)$ is not empty.
  Recall that $\Ab_p$ denotes the closed metric annulus $\A = \S
  \times \I$ with rectangular cell structure $\Ab_p = \Sb_p \times
  \Ib_p$ determined by the non-empty set $i_A(\cvl)$
  (Definition~\ref{def:crit-val-cplx}).  The open cells of this cell
  structure partition $\Ab_p$ and the components of their preimages
  under the map $p_A$ partition $\B_p$.  In fact, these components
  endow $\B_p$ with its own cell structure.  To see this, note that
  the new $0$-skeleton is simply the full preimage of the $0$-skeleton
  of $\Ab_p$.  And since the branch points of the map are sent to
  vertices of $\Ab_p$, by the definition of its cell structure, every
  open $1$-cell or open $2$-cell in $\Ab_p$ is evenly covered by $d$
  disjoint subsets of $\B_p$ that are exact copies of this open
  $1$-cell or open $2$-cell.  Finally, the topology of the branched
  cover ensures that the attaching maps behave as expected.  We write
  $\Bb_p$ to denote the branched annulus $\B_p$ together with this
  ``pulled back'' metric cell structure derived from the map to
  $\Ab_p$, and we write $\pb \colon \Bb_p \to \Ab_p$ for the metric
  cellular map from $\Bb_p$ to $\Ab_p$.  See
  Figure~\ref{fig:poly-maps}.
\end{defn}

\begin{rem}[Boundary of $\Bb_p$]\label{rem:Bb-boundary}
  Recall from Definition~\ref{def:crit-val-cplx} that
  the two boundary circles of $\Ab_p$ are isomorphic copies 
  of $\Sb_p$.  In $\Bb_p$, the $d$ root circles 
  are copies of $\Sb_p$ and the circle at infinity
  is a copy of $\Sb_p(d)$ as defined in Definition~\ref{def:circle-covers}.
\end{rem}

In $\Bb_p$, the $2$-cells are metric rectangles, the $1$-cells are
metric line segments, and each $1$-cell is incident to exactly two
$2$-cells, so all of the interesting structure occurs near
the $0$-cells.  We call a vertex in $\Bb_p$ \emph{critical} if
it is in $i_B(\cvl)$ and \emph{regular} if not.

\begin{lem}[Regular vertices]\label{lem:reg-vert}
  A regular vertex of $\Bb_p$ has valence $3$ if it is 
  in the boundary and valence $4$ if it is in the interior.
\end{lem}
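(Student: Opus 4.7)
The plan is to reduce the valence computation to a link calculation in the product complex $\Ab_p = \Sb_p \times \Ib_p$, exploiting the fact that $\pb \colon \Bb_p \to \Ab_p$ is a local isomorphism of cell complexes at every regular vertex.

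First I would verify this local-isomorphism claim. By construction (Definition~\ref{def:cellular-branched-annulus}) the cells of $\Bb_p$ are the connected components of preimages of open cells of $\Ab_p$, and the branch locus of $\pb$ is contained in the $0$-skeleton and consists precisely of the critical vertices. Hence for any regular vertex $v \in \Bb_p$, the map $\pb$ is a local homeomorphism near $v$, and the combinatorial star of $v$ is carried bijectively onto the star of $\pb(v)$. In particular $v$ and $\pb(v)$ have the same valence, and $v$ lies on $\partial \Bb_p$ if and only if $\pb(v)$ lies on $\partial \Ab_p = \lat_{-1} \cup \lat_1$, since $\pb$ sends boundary to boundary by Proposition~\ref{prop:brannulus}.

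Next I would compute the valences of vertices in $\Ab_p$. Because $\Ab_p = \Sb_p \times \Ib_p$ is a rectangular product, the link of a vertex is the spherical join of the links in the two factors. In the circle $\Sb_p$, every vertex has link $\S^0$ (two incident edges). In the interval $\Ib_p$, each interior vertex has link $\S^0$, while each of the two boundary vertices $\pm 1$ has link consisting of a single point. Thus an interior vertex of $\Ab_p$ has link $\S^0 \ast \S^0$, a combinatorial $4$-cycle of valence $4$, while a boundary vertex has link $\S^0 \ast \{\text{pt}\}$, an arc with two edges, hence valence $3$.

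Combining the two steps yields the stated valences $4$ and $3$ for regular interior and regular boundary vertices of $\Bb_p$, respectively. I expect no real obstacle; the only subtlety is confirming that the cellular star of $v$ in $\Bb_p$ really does biject with the cellular star of $\pb(v)$ in $\Ab_p$, and this follows immediately from the component-wise construction in Definition~\ref{def:cellular-branched-annulus} together with the observation that the only vertices at which $\pb$ fails to be a local homeomorphism are the critical ones.
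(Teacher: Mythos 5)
Your proposal is correct and follows essentially the same route as the paper: observe that $\pb$ is a local homeomorphism at any regular vertex, so the valence equals that of the image vertex in $\Ab_p$, and then read off the valences $3$ and $4$ from the rectangular product structure $\Sb_p \times \Ib_p$. The paper simply states the second step without the explicit link computation, so your spherical-join argument is just a more detailed account of the same proof.
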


\begin{proof}
  When $\vb$ is a regular vertex in $\Bb_p$, $\pb$ is a local
  homeomophism near $\vb$ by Proposition~\ref{prop:poly-maps}, so the
  valence at $\vb$ is equal to the valence of its image $\pb(\vb) \in
  \Ab_p$.  The values listed are those for the rectangular cell
  structure on $\Ab_p$.
\end{proof}

\begin{lem}[Critical vertices]\label{lem:crit-vert}
  If $b \in \C_\rts$ is a critical point for $p$ with multiplicity
  $k$, then its image $\bb = i_B(b)$ is a critical vertex in $\Bb_p$
  of valence $4(k+1)$.
\end{lem}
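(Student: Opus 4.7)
The plan is to combine the standard local model of a complex polynomial at a critical point with the pullback construction of the cell structure on $\Bb_p$, then count edges directly.

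First, I would record the local behavior of $p$ near $b$. Since $b$ is a critical point of multiplicity $k$, meaning $p'$ vanishes to order $k$ at $b$, the polynomial $p(z)-p(b)$ vanishes to order $k+1$ at $b$, so the Taylor expansion has the form $p(z)=c+a(z-b)^{k+1}+O((z-b)^{k+2})$ with $c=p(b)$ and $a\neq 0$. Consequently, after a holomorphic change of coordinates centered at $b$ and at $c$, the map $p$ is locally conjugate to $w\mapsto w^{k+1}$, so in a small enough neighborhood of $b$ the map $p$ is a branched cover of degree $k+1$ branched precisely at $b$.

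Second, I would locate the vertex $\bb$ and its image $\pb(\bb)=i_A(c)$ inside the two cell complexes. By Proposition~\ref{prop:distinct-roots} we have $c\in \cvl\subset\C_\zer$, so $i_A(c)\in \A^\inter$ is an interior vertex of the rectangular complex $\Ab_p=\Sb_p\times\Ib_p$. At such an interior vertex the product cell structure has exactly four incident edges: two segments of the latitude circle $\lat_{\hgt(i_A(c))}$ (one on each angular side) and two segments of the longitude line $\lng_{\arg(i_A(c))}$ (one above and one below). Meanwhile $b\in\C_\rts$, so by Definition~\ref{def:brannulus-metric} the image $\bb=i_B(b)$ lies in $\B_p^\inter$, not on the boundary.

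Third, I would carry out the pullback count. By Definition~\ref{def:cellular-branched-annulus}, the cell structure of $\Bb_p$ is obtained as the connected-component decomposition of preimages of open cells of $\Ab_p$ under $p_A$. Pick a small open disk $U\subset\A^\inter$ around $i_A(c)$ such that $U$ meets only the four open edges and four open $2$-cells at $i_A(c)$, and small enough that the component of $p_A^{-1}(U)$ containing $\bb$ is an open disk $V$ on which the restriction of $p_A$ is equivalent to the model branched cover $w\mapsto w^{k+1}$ from step one. On the punctured disk $V\setminus\{\bb\}$, the restriction of $p_A$ is an unbranched cyclic $(k+1)$-fold cover of $U\setminus\{i_A(c)\}$, so the preimage in $V$ of each of the four radial open edges meeting $i_A(c)$ consists of exactly $k+1$ open edges, each of which has $\bb$ as one endpoint. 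Summing gives $4(k+1)$ edges incident to $\bb$, which is the claimed valence.

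The main obstacle is verifying cleanly the local model statement used in step three, namely that a neighborhood of $\bb$ really does admit a chart in which $p_A$ is the $(k+1)$-fold power map and in which the four coordinate rays through $i_A(c)$ pull back to $4(k+1)$ radial arcs through $\bb$; everything else is bookkeeping. This reduces to the holomorphic normal form near $b$ together with the observation that the four edges at $i_A(c)$ meet transversely in pairs (one latitude pair, one longitude pair) and that the pullback metric from Definition~\ref{def:pullback-metrics} makes $p_A$ a local isometry away from $\bb$, so the cyclic symmetry of the branched cover distributes the $4(k+1)$ preimage edges evenly around $\bb$ without collapse or identification.
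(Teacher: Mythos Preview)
Your proof is correct and follows essentially the same approach as the paper: both establish that $p$ is locally a $(k+1)$-fold branched cover near $b$ (the paper via the integral representation of Proposition~\ref{prop:poly-deriv}, you via the Taylor expansion and holomorphic normal form), then observe that $\pb(\bb)$ is an interior vertex of $\Ab_p$ of valence $4$ so its branched preimage has valence $4(k+1)$. Your version is more explicit about the edge-counting step, but the underlying argument is the same.
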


\begin{proof}
  Since $b$ is a critical point of multiplicity $k$, $p'(z) = (z-b)^k
  q(z)$ for some polynomial $q(z)$ with $q(b) \neq 0$. By
  Proposition~\ref{prop:poly-deriv}, $p(z) = p(b) + \int_b^z (w-b)^k
  q(w) dw$.  When $z \approx b$, $q(z) \approx q(b)$ and the integral
  is approximately $q(b)\int_b^z (w-b)^k\ dw$. Thus, $p(z) \approx
  p(b) + \frac{q(b)}{k+1}(z-b)^{k+1}$.  In particular, in a small
  neighborhood of the $b$, $p$ is, roughly speaking, a shifted and
  rescaled version of $z \mapsto z^{k+1}$.  Because $\pb(\bb)$ is in
  the interior of $\Ab_p$, it has valence $4$ and this means that
  $\bb$ has valence $4(k+1)$.
\end{proof}

Since the $2$-cells in $\Bb_p$ are metric rectangles, the valence
information in Lemmas~\ref{lem:reg-vert}
and~\ref{lem:crit-vert} immediately implies that $\Bb_p$, and
the underlying metric space $\B_p$, are locally $\cat(0)$.  This
proves the following proposition and
completes the proofs of Theorems~\ref{thma:metric-planar-complex}
and~\ref{thma:topology}.

\begin{prop}[Locally $\cat(0)$]
  For every polynomial $p$ with distinct roots and degree at least
  $2$, the compact surface $\B_p$ is locally $\cat(0)$.
\end{prop}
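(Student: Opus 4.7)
The plan is to verify Gromov's link condition for the rectangular 2-complex $\Bb_p$ and deduce that the underlying metric space $\B_p$ is locally $\cat(0)$. Since every 2-cell of $\Bb_p$ is a Euclidean metric rectangle (all interior angles are $\pi/2$), each corner of a 2-cell contributes an arc of length $\pi/2$ to the link of its incident vertex. Thus the link of each vertex is a metric graph in which every edge has length $\pi/2$, and local $\cat(0)$-ness reduces to checking that each such link is $\cat(1)$, which for a metric graph is equivalent to the systole being at least $2\pi$, i.e.\ every embedded cycle contains at least four edges.

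First I would handle points that are not $0$-cells. In the interior of a $2$-cell the metric is Euclidean, so locally $\cat(0)$ is immediate. In the interior of a $1$-cell exactly two rectangles meet isometrically along the edge, so a neighborhood is again a flat Euclidean disk (for interior edges) or a flat half-disk (for boundary edges), both obviously $\cat(0)$. This reduces the proposition to analyzing vertex links.

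Next I would run through the three types of vertices using Lemmas~\ref{lem:reg-vert} and~\ref{lem:crit-vert}. For a regular interior vertex $\vb$, Proposition~\ref{prop:poly-maps} shows $\pb$ is a local homeomorphism near $\vb$, so the four rectangles meeting at $\vb$ are cyclically arranged exactly as the four rectangles meeting $\pb(\vb)$ in $\Ab_p$; the link is a $4$-cycle of total length $2\pi$. For a regular boundary vertex, the link is a path on $3$ vertices and $2$ edges (no embedded cycles at all), trivially $\cat(1)$. For a critical interior vertex $\bb$ of multiplicity $k$, the local model $z\mapsto z^{k+1}$ computed in the proof of Lemma~\ref{lem:crit-vert} shows that $\pb$ wraps a small punctured neighborhood of $\bb$ around $\pb(\bb)$ exactly $k+1$ times, so the $4(k+1)$ rectangles at $\bb$ are arranged cyclically around $\bb$; the link is a single cycle on $4(k+1)\geq 8$ edges with total length $2\pi(k+1)\geq 4\pi$, hence $\cat(1)$.

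The main obstacle, such as it is, lies in the last case: one must confirm that the rectangles at a critical vertex really do glue into a single cycle (rather than, say, several disjoint cycles) in the link. This is not a combinatorial accident but follows from the fact that $\pb$ is topologically conjugate to $z\mapsto z^{k+1}$ near $\bb$, so the preimage of a small punctured disk around $\pb(\bb)$ is connected and wraps $k+1$ times; the $4$ corners at $\pb(\bb)$ therefore lift to $4(k+1)$ corners at $\bb$ arranged in a single cyclic order. Once this is observed, Gromov's link condition applies uniformly and yields the proposition, which together with Lemmas~\ref{lem:reg-vert} and~\ref{lem:crit-vert} completes the proofs of Theorems~\ref{thma:metric-planar-complex} and~\ref{thma:topology}.
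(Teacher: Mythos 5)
Your proof is correct and follows essentially the same route as the paper: the paper states that the valence counts of Lemmas~\ref{lem:reg-vert} and~\ref{lem:crit-vert}, together with the fact that all $2$-cells are metric rectangles, ``immediately'' give local $\cat(0)$-ness, and your argument is precisely the Gromov link condition computation that justifies that claim. You have simply made explicit the details (links as $\pi/2$-edge cycles of length $2\pi$ or $2\pi(k+1)$, or paths at boundary vertices, plus the single-cycle observation at a critical vertex) that the paper leaves to the reader.
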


\section{Drawing a Branched Annulus}\label{sec:draw-b}

The process used to draw the vertical annulus $\A$ inside the closed
unit disk $\D$ can be slightly modified to draw the branched annulus 
$\B_p$ inside the same disk.

\begin{figure}
\begin{tikzcd}
  \C_\rts \arrow[r,"\widetilde H_\epsilon"] 
  \arrow[d,hookrightarrow,"\mathbf{i_B}"] &
  \C_\rts \arrow[r,hookrightarrow,"i"] & 
  \C \arrow[d,hookrightarrow,"i_D"] \\
  \B \arrow[rr,hookrightarrow,"j_{BD}"] & &
            \D  \\
\end{tikzcd}
\caption{Drawing the branched annulus $\B$ inside the disk $\D$.\label{fig:drawing-B}}
\end{figure}

\begin{defn}[Drawing $\B_p$ inside $\D$]\label{def:drawing-branched-annuli}
  Pick small numbers $\alpha > \beta > 0$ and define the extended map
  $H_\epsilon \colon \C_0 \to \C_0$ as described in
  Definition~\ref{def:enlarge-punct} with $\epsilon = \beta/\alpha$.
  In particular, choose $\alpha$ small enough so that the disk of
  radius $\alpha$ centered at the origin is evenly covered by the
  polynomial $p$.  Then the slight perturbation $H_\epsilon$ inside
  the deleted neighborhood of the origin can be simultaneously lifted
  to similar slight perturbations around each of the deleted root
  neighborhoods in $\C_\rts$.  Let $\widetilde H_\epsilon\colon
  \C_\rts \to \C_\rts$ be this map where $p_\zer \circ \widetilde
  H_\epsilon = H_\epsilon \circ p_\zer$.  The composition $i_D \circ
  \widetilde H_\epsilon \circ i_B^{-1}$ maps the interior of the
  branched annulus homeomorphically to the open unit disk with $d$
  small closed disks removed, and this homeomorphism extends to a
  well-behaved embedding $j_{BD} \colon \B_p \hookrightarrow \D$
  between their metric completions.  See Figure~\ref{fig:drawing-B}.
  In particular, $j_{BD}$ homeomorphically embeds $\B_p$ into $\D$ and
  it agrees with the metric completion of the map $i_B^{-1} \circ i_D$
  except in small neighborhoods of the root circles.  We use this type
  of identification whenever we draw $\B_p$ inside $\D$.
\end{defn}

\begin{example}
  Let $p(z) = .02(3z^5 - 15z^4 + 20z^3 - 30z^2 + 45z)$ as in
  Example~\ref{ex:3-ms-generic}. The planar drawings of $\Bb_p$ and
  $\Ab_p$ are shown in Figure~\ref{fig:metric-completion}. The
  critical longitudes and latitudes in $\Ab_p$ are drawn in color and
  greyscale respectively, and their preimages in $\Bb_p$ are shaded
  accordingly.
\end{example}

\begin{figure}
    \centering
    \includegraphics[scale=0.7]{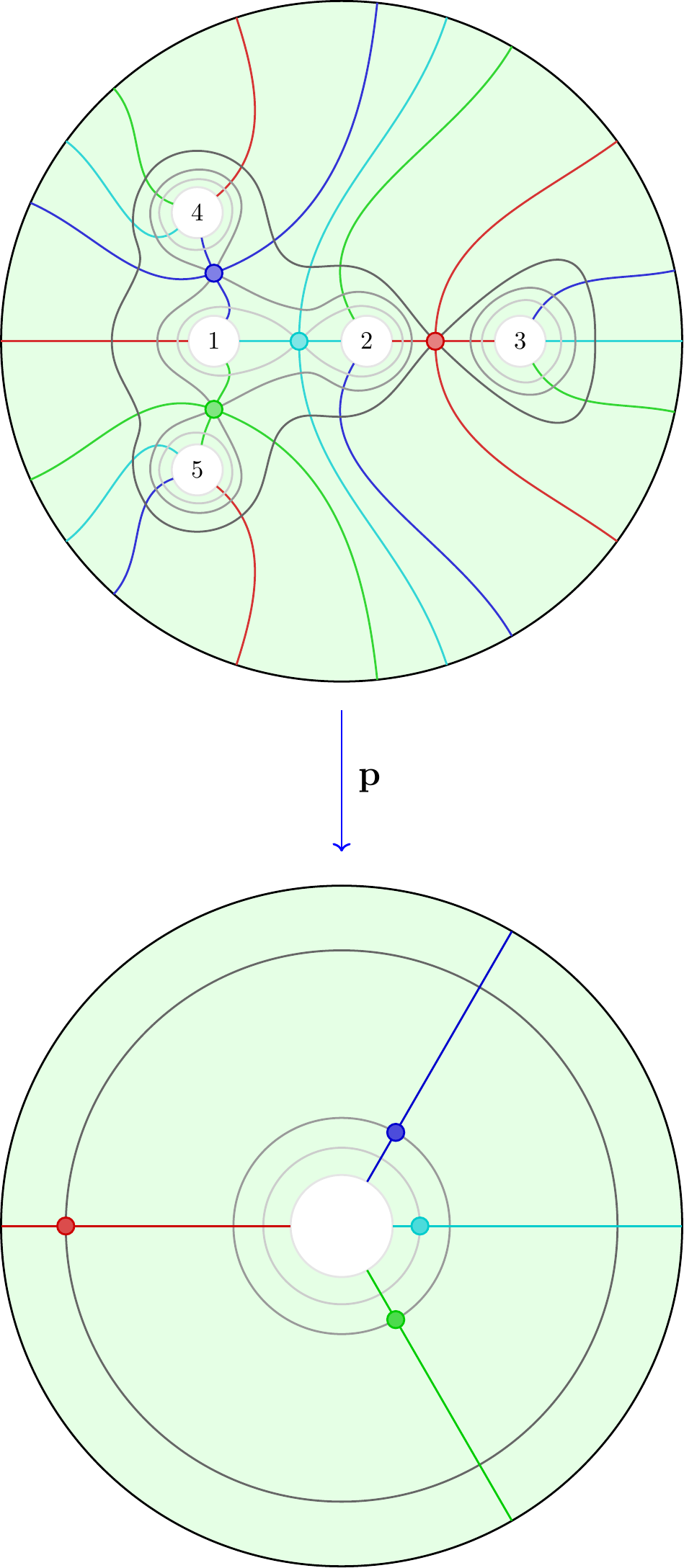}
    \caption{The metric cellular map $\pb$ from the metric rectangular
      $2$-complex $\Bb_p$ to the compact annulus $\Ab_p$ with its
      metric rectangular cell structure.  Both the domain and the
      range are embedded in the unit disk, even though this process
      distorts their intrinsic metrics.\label{fig:metric-completion}}
\end{figure}

\begin{rem}[Multipedal pants]\label{rem:mulitpedal-pants}
  The drawing of the branched annulus $\B_p$ in the disk $\D$ can be
  lifted to a surface in the solid cylinder $\D \times \I$ by adding
  in the height of a point under the map $\B_p \to \A \to \I$ as a
  third coordinate.  The root circles lie in the disk at the bottom of
  the cylinder, the circle at infinity is boundary of the disk at the
  top, and the level sets are the horizontal cross-sections.  For
  quadratic polynomials, this surface will be the ``pair of pants''
  familiar to topologists and in general, a polynomial of degree $d$
  will produce a pair of \emph{multipedal pants} designed for a being
  with $d$ legs.  An example of a $3$-legged pair of pants for a cubic
  polynomial is shown in Figure~\ref{fig:lvl-dir}.
\end{rem}

\section{Level Sets and Direction Sets}\label{sec:level-direction}

In the remainder of the article we turn our attention to the large
amount of information encoded in the metric rectangular complex
$\Bb_p$.  Latitudes and longitudes provide a pair of transverse
measured foliations for $\A$, and their preimages under $p_A$ provide
a pair of (singular) transverse measured foliations for $\B_p$ with
singularities at the critical points of $p$.  The leaves of each
pullback foliation can be written as metric graphs via the cell
structure for $\Bb_p$ and these provide valuable combinatorial data
associated to the polynomial $p$.  This section explores the geometry
and topology of these preimages.

\begin{defn}[Level sets and direction sets]\label{def:level-direction}
  By composing the polynomial $p_A \colon \B_p \to \A$ with the
  projections $\hgt\colon\A\to\I$ and $\arg\colon\A\to\S$, we obtain
  maps $\B_p \to \I$ and $\B_p \to \S$. For each $t\in \I$ or $u\in
  \S$, define the \emph{level set of height $t$} by $\lvl_t =
  p_A^{-1}(\lat_t) = \left(\hgt\circ p_A\right)^{-1}(t)$ and the
  \emph{direction set of argument $u$} by $\dir_u = p_A^{-1}(\lng_u) =
  \left(\arg\circ p_A\right)^{-1}(u)$. Since $p_A$ is a proper map,
  each of these preimages is a compact subset of $\B_p$, and note that
  the restricted maps $\hgt\circ p_A \colon \lvl_t\to\S$ and
  $\arg\circ p_A \colon \dir_u\to\I$ are discretely branched covers.
  We say that $\lvl_t$ (or $\dir_u$) is \emph{critical} when its image
  is a critical latitude (or longitude), and \emph{regular} otherwise.  
  Each latitude circle and
  longitude line can be made into a metric graph by intersecting it
  with the cell structure for $\Ab_p$, and the results are isomorphic
  to $\Sb_p$ and $\Ib_p$ respectively. In the same way, each level set
  $\lvl_t$ and direction set $\dir_u$ inherits a cell structure from
  $\Bb_p$, turning each of them into a metric graph and making the
  restricted maps $p_A\colon\lvl_t \to \lat_t$ and
  $p_A\colon\dir_u\to\lng_u$ into metric cellular maps.  Note,
  however, that the cell structures for $\lvl_t$ and $\dir_u$ are
  subcomplexes of $\Bb_p$ if and only if they are critical.
\end{defn}

While level sets are topologically equivalent to the usual definition
for polynomial lemniscates, the metric is different.

\begin{rem}[Lemniscates]\label{rem:erdos}
  For a polynomial $p$, the subset $\{z\in C \mid |p(z)| = r\}$ in
  $\C$ is known as a \emph{polynomial lemniscate} and its image under
  the inclusion map $i_B$ is the level set described above. While
  these two objects are homeomorphic, it is important to remember that
  the bounded metric on $\B_p$ differs from the standard metric on
  $\C$. For example, Erd\H{o}s, Herzog, and Piranian asked whether the
  lemniscate $\{z\in \C \mid |p(z)| = 1\}$ has maximum length when $p$
  the polynomial $p(z) = z^d-1$, and this problem has remained open
  for over sixty years \cite{erdos58}.  On the other hand,
  \emph{every} level set in $\B_p$ has total length $2\pi d$.
\end{rem}

It is easy to provide a precise geometric description of the regular
level sets and regular direction sets, but the critical versions
require a bit more exposition.  We begin with an elementary example.

\begin{example}[One critical value]\label{ex:lvl-dir}
  Let $p(z) = a(z-b)^d + c$ with $a,c \in \C_\zer$ and $b\in \C$. As
  described in Example~\ref{ex:3-ms-special}, $b$ is the unique
  critical point for $p$, $c$ is the corresponding unique critical
  value, and $p$ has distinct roots since $c$ is nonzero. Since there
  is only one critical point, $\Sb_p$ has $1$ vertex and $1$ edge, while
  $\Ib_p$ has $3$ vertices and $2$ edges.  Let $t$ and $u$ denote the
  height and argument of $i_A(c)$ in $\A$, respectively.
  Figure~\ref{fig:lvl-dir} shows the multipedal pants version of
  $\Bb_p$ when $d=3$, along with its unique critical level set
  $\lvl_t$ and unique critical direction set $\dir_u$.  Each level set
  has total length $2\pi d$ and there are two regular isometry types:
  $\lvl_s$ is the disjoint union of $d$ copies of $\Sb_p$ when $s < t$
  and $\lvl_s$ is isometric to $\Sb_p(d)$ when $s > t$.  The unique
  critical level set for $p$ occurs at height $t$ and is isometric to
  the $d$-fold wedge sum of $d$ copies of $\Sb_p$.  Meanwhile, each
  regular direction set is isometric to the disjoint union of $d$
  copies of $\Ib_p$ and the unique critical direction set may be
  written as the wedge sum of $d$ copies of $\Ib_p$. This critical
  direction set occurs at argument $u$ and has the structure of a
  metric tree with $2d$ leaves, each of which is connected via an edge
  to a single non-leaf vertex. Half of these edges have length $t$
  while the rest have length $2-t$, and the two types alternate in the
  local cyclic ordering around the non-leaf vertex.
\end{example}

\begin{figure}
  \centering
  \includegraphics[scale=0.3]{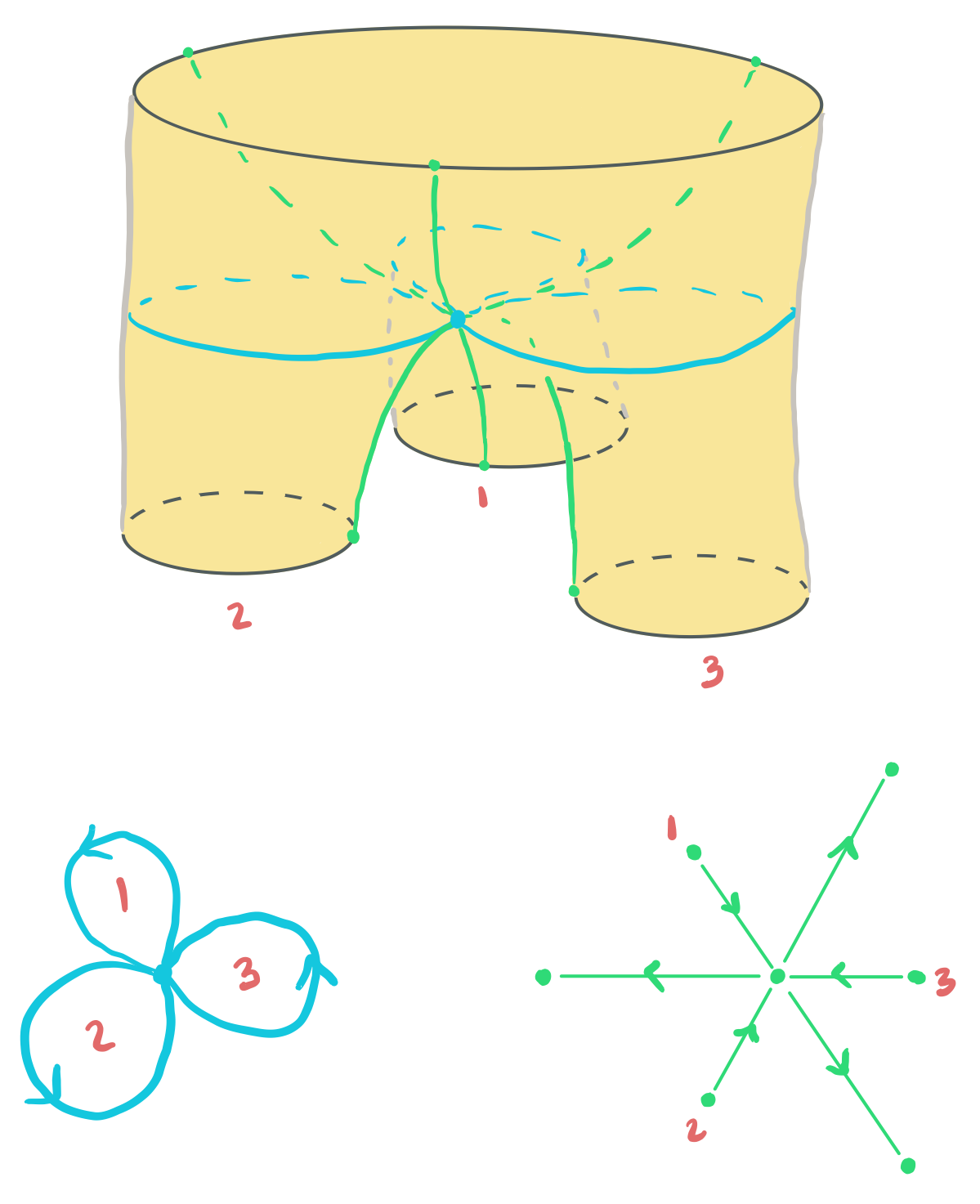}
  \caption{The branched annulus for a cubic polynomial of the form
    described in Example~\ref{ex:lvl-dir}, along with its unique
    critical level set and direction set.}
  \label{fig:lvl-dir}
\end{figure}

By Lemma~\ref{lem:crit-vert}, Example~\ref{ex:lvl-dir} also describes
the generic local picture for what level sets and direction sets look
like near the image of a critical point in $\Bb_p$. We begin with the
regular case.

\begin{lem}[Product preimages]\label{lem:product-preimages}
  Let $T\subset\I$ and $U\subset\S$ be connected subsets such that the
  product region $T\times U \subset \A$ is disjoint from $i_A(\cvl)$.
  If $U$ is a proper subset of $\S$, then the preimage
  $p_A^{-1}(T\times U)$ has $d$ connected components, each of which is
  isometric to the rectangle $T\times U$.  If $U = \S$, then there are
  positive integers $k_1,\ldots,k_m$ with $k_1 + \cdots + k_m = d$
  such that the preimage $p_A^{-1}(T\times U)$ is isometric to the
  disjoint union of annuli $(T\times \S(k_1))\sqcup \cdots \sqcup
  (T\times\S(k_m))$.
\end{lem}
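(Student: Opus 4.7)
The plan is to reduce the lemma to a classification of unbranched connected covers, using the fact that the product region $T \times U$ avoids the critical values. By Proposition~\ref{prop:poly-maps}, the only branch points of the map $p_A$ in the interior of $\B_p$ lie above $i_A(\cvl)$, and by Proposition~\ref{prop:brannulus} the map is already understood along $\partial \A$. Setting $W := p_A^{-1}(T \times U)$, the restriction $p_A|_W \colon W \to T \times U$ is therefore an unbranched $d$-sheeted covering map. Moreover, by construction of the pullback metric (Definition~\ref{def:pullback-metrics}), this restriction is a local isometry.

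For the first case, I would note that if $U \subsetneq \S$ is a connected proper subset of the circle, then $U$ is an arc and $T \times U$ is homeomorphic to a rectangle, hence simply connected. An unbranched cover of a simply connected space is trivial, so $W$ decomposes as the disjoint union of $d$ connected components, and the restriction of $p_A$ to each one is a local isometry and a homeomorphism onto $T \times U$. A local isometry that is also a bijection is a global isometry, giving the desired conclusion.

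For the second case, with $U = \S$, the region $T \times \S$ is an annulus (possibly degenerating to a circle when $T$ is a single point) with $\pi_1 \cong \Z$. The classification of connected covers then says that each connected component of $W$ is a cover corresponding to a subgroup $k_i \Z \subset \Z$ for some positive integer $k_i$; that component has degree $k_i$ over $T \times \S$, and it is homeomorphic to $T \times \S(k_i)$ via the covering $\S(k_i) \to \S$ from Definition~\ref{def:circle-covers}. Since that map is a local isometry, the induced product cover $T \times \S(k_i) \to T \times \S$ is as well, and the same local-isometry-plus-bijection argument identifies each component of $W$ isometrically with the corresponding $T \times \S(k_i)$. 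Counting sheets over any regular fiber forces $k_1 + \cdots + k_m = d$.

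The main technical nuisance will be allowing $T$ or $U$ to include endpoints where the relevant sets are not open, so that textbook covering space theory does not apply verbatim; for example $T$ may include $-1$ or $1$, where $p_A$ behaves differently than it does over $\A^\inter$. I would handle this by first running the argument on the open region $T^\circ \times U^\circ$ (relative to $\I$ and $\S$), where Proposition~\ref{prop:poly-maps} gives a clean covering, and then extending to the closure via the metric completion: every local isometry between bounded metric spaces extends uniquely to the completion, and the enumeration of components above $\partial \A$ recorded in Remarks~\ref{rem:near-a-root} and~\ref{rem:near-infinity} is consistent with the two cases of the lemma (the root circles realize $k_i = 1$ contributions and the circle at infinity realizes a single $k_1 = d$ contribution).
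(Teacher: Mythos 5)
Your proposal is correct and follows essentially the same route as the paper's proof: observe that the restriction of $p_A$ over $T\times U$ is a $d$-fold covering map and a local isometry, then invoke the triviality of covers of the simply connected rectangle in the first case and the classification of connected covers of the annulus by subgroups $k_i\Z\subset\Z$ in the second. Your extra care about the boundary latitudes is a reasonable refinement of the same argument rather than a different approach.
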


\begin{proof}
  First, note that the restricted map $p_A\colon p_A^{-1}(T\times
  U)\to T\times U$ is a $d$-fold covering map and a local isometry. If
  $U$ is a proper subset of $\S$, then $T\times U$ is simply-connected
  and thus the preimage under $p_A$ must consist of $d$ isometric
  copies of $T\times U$.  If $U=\S$, then $T\times U$ is an annulus of
  circumference $2\pi$, so each connected component of the preimage is
  an annulus with circumference equal to an integer multiple of
  $2\pi$.
\end{proof}

Note that the rectangles and annuli described might be degenerate if
$T$ or $U$ (or both) is only a single point.  Next, we add in the cell
structure.  Recall that $\Ab_p = \Ib_p \times \Sb_p$.  By composing
$\pb \colon\Bb_p\to\Ab_p$ with the projection onto either factor,
there are natural cellular maps from the branched annulus to the
interval and to the circle.  The preimages of open edges under these
compositions can easily be characterized, providing a concrete
description of both regular level sets and regular direction sets.
The following two results are immediate consequences of 
Lemma~\ref{lem:product-preimages}. 

\begin{lem}[Regular level sets]\label{lem:reg-lvl-sets}
  If $T$ is an open edge in $\Ib_p$, then its preimage in $\Bb_p$ is a
  union of disjoint annuli, and each annulus is a direct metric
  product of the open edge $T$ with a finite-sheeted cover of $\Sb_p$.  
  More precisely, there exist positive integers $k_1,\ldots,k_m$ with $k_1
  + \cdots + k_m = d$ such that the preimage of $T$ is a disjoint
  union of annuli $(T\times \Sb_p(k_1))\sqcup \cdots \sqcup
  (T\times\Sb_p(k_m))$. As a consequence, for each point $t\in T$, the
  regular level set $\lvl_t$ looks like $\Sb_p(k_1) \sqcup \cdots
  \sqcup \Sb_p(k_m)$ and its metric cell structure is independent of
  the choice of $t\in T$.
\end{lem}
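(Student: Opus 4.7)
The plan is to invoke Lemma~\ref{lem:product-preimages} directly with $U=\S$, and then transfer the resulting product decomposition from the metric setting to the cellular setting. The main point is that an open edge $T$ of $\Ib_p$, by the very construction of $\Ib_p$ in Definition~\ref{def:crit-val-cplx}, contains no height of any critical value, so the product region $T\times \S \subset \A$ is disjoint from $i_A(\cvl)$.

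With this in hand, I would first apply Lemma~\ref{lem:product-preimages} in the case $U=\S$. This immediately yields positive integers $k_1,\ldots,k_m$ with $k_1+\cdots+k_m=d$ such that
\[
p_A^{-1}(T\times \S)\ \cong\ (T\times \S(k_1))\ \sqcup\ \cdots\ \sqcup\ (T\times \S(k_m))
\]
as metric spaces, with $p_A$ restricting on each component to the product of $\mathrm{id}_T$ with the standard $k_i$-fold cover $\S(k_i)\to \S$.

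Next I would upgrade this to a statement about cell structures. The cell structure $\Ab_p = \Sb_p\times\Ib_p$ restricted to $T\times \S$ is just $T\times \Sb_p$, since $T$ is an open edge of $\Ib_p$ and therefore contains no $0$-cells in its interior. The cell structure on $\Bb_p$ was defined in Definition~\ref{def:cellular-branched-annulus} by pulling back cells of $\Ab_p$ through $p_A$, and the pullback of the cell structure $T\times \Sb_p$ along the covering map $T\times \S(k_i)\to T\times \S$ is precisely the lifted cell structure $T\times \Sb_p(k_i)$ from Definition~\ref{def:circle-covers}. Thus each annular component carries the claimed product cellular structure.

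Finally, for a fixed $t\in T$, the level set $\lvl_t$ is the preimage of the horizontal slice $\lat_t = \{t\}\times \S$, so slicing the product decomposition at height $t$ gives
\[
\lvl_t\ \cong\ \{t\}\times \Sb_p(k_1)\ \sqcup\ \cdots\ \sqcup\ \{t\}\times \Sb_p(k_m)\ \cong\ \Sb_p(k_1)\sqcup\cdots\sqcup \Sb_p(k_m),
\]
and because the product cellular structure on $T\times \Sb_p(k_i)$ is constant along the $T$-direction, this metric cell structure on $\lvl_t$ does not depend on the choice of $t\in T$. The only ``obstacle''---and it is more of a verification than a difficulty---is checking that $T\times\S$ is critical-value-free; once that is observed, everything else is bookkeeping on top of Lemma~\ref{lem:product-preimages}.
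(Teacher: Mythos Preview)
Your proof is correct and follows exactly the approach of the paper, which simply states that this lemma is an immediate consequence of Lemma~\ref{lem:product-preimages}. You have spelled out the details of that ``immediate'' deduction carefully, including the verification that $T\times\S$ avoids $i_A(\cvl)$ and the passage from the metric decomposition to the cellular one, but the underlying argument is the same.
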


\begin{lem}[Regular direction sets]\label{lem:reg-dir-sets}
  If $U$ is an open edge in $\Sb_p$, then its preimage in $\Bb_p$ is a
  union of $d$ disjoint rectangles, and each rectangle is a metric
  product of $\Ib_p$ and the open edge $U$. As a consequence, for each
  point $u \in U$, the regular direction set $\dir_u$ looks like
  $\Ib_p \sqcup \cdots \sqcup \Ib_p$ and its metric cell structure is
  independent of the choice of $u\in U$.
\end{lem}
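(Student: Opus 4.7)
The plan is to apply the Product Preimages lemma (Lemma~\ref{lem:product-preimages}) to the strip $U \times \I \subset \A$, where $U$ is the given open edge of $\Sb_p$. Before invoking it, I would verify its two hypotheses. First, the strip $U \times \I$ is disjoint from $i_A(\cvl)$: by the construction of $\Ab_p = \Sb_p \times \Ib_p$ in Definition~\ref{def:crit-val-cplx}, every critical value $c$ has $\arg(i_A(c))$ equal to a vertex of $\Sb_p$, whereas $U$, being the interior of an edge of $\Sb_p$, contains no such vertex. Second, $U$ is a proper subset of $\S$, since the standing assumption that $\Sb_p$ carries a nontrivial cell structure (degree at least $2$ with distinct roots) guarantees that $\cvl$, and hence the vertex set of $\Sb_p$, is nonempty.

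Having verified the hypotheses, Lemma~\ref{lem:product-preimages} yields that $p_A^{-1}(U \times \I)$ has exactly $d$ connected components, each isometric to the rectangle $U \times \I$. To upgrade this from a purely isometric statement to a metric cell structure statement, I would observe that since $U$ is a single open edge of $\Sb_p$, the restriction of the product cell complex $\Ab_p = \Sb_p \times \Ib_p$ to the strip $U \times \I$ is precisely $U \times \Ib_p$, with no subdivision internal to the strip in the $\S$-direction. Since $\Bb_p$ is defined in Definition~\ref{def:cellular-branched-annulus} by pulling back the cell structure on $\Ab_p$ via $\pb$, each of the $d$ components in $\Bb_p$ inherits exactly the cell structure $U \times \Ib_p$, which gives the first assertion.

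For the direction-set assertion, I would fix an arbitrary $u \in U$. The longitude line $\lng_u = \{u\} \times \I$ lies in the strip $U \times \I$, so $\dir_u = p_A^{-1}(\lng_u)$ is obtained by slicing the $d$ rectangular components of $p_A^{-1}(U \times \I)$ at argument $u$. Each slice is isometric to $\{u\} \times \Ib_p$, a copy of $\Ib_p$, so $\dir_u$ is the disjoint union of $d$ copies of $\Ib_p$ as metric cell complexes, and independence of $u \in U$ follows because the cell structure on each component is constant in the $U$-direction. Since the author explicitly advertises the result as an immediate consequence of the Product Preimages lemma, there is no serious obstacle; the only bookkeeping to be careful with is checking that vertices of $\Ab_p$ do not intrude into the open strip and that the pullback cell structure correctly factors as $U \times \Ib_p$.
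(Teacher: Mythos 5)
Your proposal is correct and follows exactly the route the paper intends: the paper states Lemma~\ref{lem:reg-dir-sets} as an immediate consequence of Lemma~\ref{lem:product-preimages} applied to the strip over an open edge $U$ of $\Sb_p$, and your verification of the hypotheses (the strip avoids $i_A(\cvl)$ because critical values project to vertices of $\Sb_p$, and $U$ is proper because $\cvl$ is nonempty) together with the slicing argument at a fixed $u\in U$ is precisely the bookkeeping the paper leaves implicit.
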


Removing a regular direction set leaves a branched cover of a
rectangle.

\begin{lem}[Removing a regular direction set]\label{lem:remove-reg-dir}
  Let $T$ be an open edge in $\Sb_p$.  When the preimage of $T \times
  \Ib_p$ under $\pb$ is removed from the branched annulus $\Bb_p$, the
  result is a contractible, connected branched cover of the rectangle
  $\Ab_p \setminus (T\times \Ib_p)$. 
\end{lem}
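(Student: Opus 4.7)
The plan is to identify the preimage of $T \times \Ib_p$ as $d$ disjoint topological strips joining the circle at infinity to the $d$ distinct root circles of $\Bb_p$, and then to observe that cutting a genus-$0$ surface with $d+1$ boundary components along such a family of arcs produces a disk.

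First, since $T$ is an open edge in $\Sb_p$ it contains no vertex, and the vertices of $\Sb_p$ were defined to be the arguments of the critical values. Hence $T \times \Ib_p$ is disjoint from $i_A(\cvl)$, so the restriction of $\pb$ to $\pb^{-1}(T\times \Ib_p)$ is an unbranched $d$-fold cover of the contractible space $T\times \Ib_p$; by Lemma~\ref{lem:reg-dir-sets} this preimage is a disjoint union $R_1 \sqcup \cdots \sqcup R_d$ of $d$ open rectangles. The complement $(\Sb_p \setminus T)\times \Ib_p$ in $\Ab_p$ is a closed rectangle, i.e.\ a disk, and contains every critical value, so the restriction of $\pb$ to $X = \Bb_p\setminus \pb^{-1}(T\times \Ib_p)$ is a branched cover of a disk.

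Next, I would verify that the $d$ strips $R_i$ join the circle at infinity to the $d$ \emph{distinct} root circles of $\Bb_p$. By Remark~\ref{rem:near-a-root}, near each root the map $p_A$ is a local homeomorphism onto a neighborhood of the lower boundary circle of $\A$, so the preimage of the bottom arc $T\times\{-1\}$ contributes exactly one subarc to each of the $d$ root circles. By Remark~\ref{rem:near-infinity}, the circle at infinity covers the upper boundary circle of $\A$ with degree $d$, so the preimage of the top arc $T\times\{+1\}$ consists of $d$ disjoint subarcs of that single circle. Pairing these up through the rectangles $R_i$, each $R_i$ joins the circle at infinity to a distinct root circle.

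Finally, I would finish with a standard surface argument. By Theorem~\ref{thma:metric-planar-complex}, $\Bb_p$ is a sphere with $d+1$ open disks removed, and removing a thin open strip joining two distinct boundary components has the same effect, up to homeomorphism of the result, as cutting the surface along a properly embedded arc with endpoints on those components: it merges the two boundary circles into one while keeping the surface connected. Applying this to each $R_i$ in turn merges a different root circle with the circle at infinity, so after all $d$ excisions we are left with a genus-$0$ surface with a single boundary circle, i.e.\ a closed disk, which is contractible and connected. The only real subtlety is the identification of the $R_i$ with arcs between \emph{distinct} boundary components; once that is in hand, the topological conclusion is routine.
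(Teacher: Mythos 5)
Your proof is correct and follows essentially the same route as the paper's: identify the preimage as $d$ disjoint strips, each joining a distinct root circle to the circle at infinity, and then conclude with a standard cut-and-count argument (the paper tracks the rank of the free fundamental group of $\Bb_p$ dropping with each cut, while you track the merging of boundary components --- a cosmetic difference). The only point the paper addresses that you omit is the degenerate case where $\Sb_p$ has a single edge, so that $\Ab_p\setminus(T\times\Ib_p)$ is just a copy of $\Ib_p$ and the ``surface'' degenerates to a planar tree; the conclusion still holds there, but your surface-cutting argument would need a word of adjustment.
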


\begin{proof}
  By Lemma~\ref{lem:reg-lvl-sets} the inverse image of the strip
  $T\times \Ib_p$ under $\pb$ consists of $d$ disjoint copies of the
  strip, with each copy connecting a $T$-edge in a root circle to one
  of the $d$ $T$-edges in the circle at infinity.  Moreover, distinct
  strips start at distinct root circles, since each root circle has a
  unique edge sent to the copy of $T$ in $\lat_{-1}$.  The fundamental
  group of the branched annulus $\Bb_p$ is a rank~$d$ free group
  generated by loops running around the root circles, but each strip
  that is removed cuts the surface and reduces the rank of this free
  group.  The final surface is connected and contractible with a
  single boundary cycle, and it is a branched cover by
  Proposition~\ref{prop:poly-maps}. Notice that when $\Sb_p$ has only
  one edge, the `rectangle' $\Ab_p \setminus (T\times \Ib_p)$ is
  degenerate, being only a single copy of $\Ib_p$. The result still
  holds, but the `surface' degenerates into a planar tree.
\end{proof}

Critical level sets and critical direction sets are more complicated
due to the branch points.  A connected component of a
critical level set is a special type of metric graph that we call a
\emph{metric cactus}.  A metric cactus may be of the form $\Sb_p(k)$,
as in the regular case, but it can also contain branch points, as in
Figure~\ref{fig:lvl-dir}.

\begin{defn}[Metric cacti]\label{def:metric-cacti}
  A \emph{cactus diagram} is a contractible $2$-dimensional cell
  complex embedded in the plane where each edge lies in the boundary
  of a unique $2$-cell.  As a consequence of these restrictions, the
  boundary of an open $2$-cell is a (simple) cycle, disjoint open
  $2$-cells have closures that intersect in at most one point, and the
  closed $2$-cells are assembled in a ``tree-like'' fashion. In graph
  theory a \emph{cactus} is a graph where every edge belongs to a
  unique cycle, and the $1$-skeleton of a cactus diagram is an
  example.  Note that loops and multiple edges are permitted.
  Actually, the $1$-skeleton of a cactus diagram can be viewed as a
  directed graph by giving a counter-clockwise orientation to the
  boundary cycle of each $2$-cell.  Let $\Sb$ be a circle of length
  $2\pi$ with a fixed cell structure and consistently oriented edges.
  If $\Gamma$ is a branched cover of $\Sb$ and it is also the
  oriented $1$-skeleton of a cactus diagram, then we call $\Gamma$ a
  \emph{metric cactus}.  Note that every simple cycle in $\Gamma$ is a
  copy of $\Sb(k)$ for some positive integer $k$. 
\end{defn}

Cactus diagrams and cactus graphs appear in \cite{nekrashevych14}, but
it is worth noting that they are put to a different use.  Cactus
cycles in \cite{nekrashevych14} correspond to post-critical points of
$p$, whereas ours are closely connected to roots.

\begin{lem}[Critical level sets]\label{lem:critical-levels}
  Inside $\Bb_p$, every component of every level set is a metric
  cactus.  Moreover, each critical point of multiplicity $k$ labels a
  cactus vertex of valence $2(k+1)$ and it belongs to exactly $k+1$
  distinct cycles.
\end{lem}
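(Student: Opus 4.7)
I would handle regular and critical components separately. If a component $L$ of a level set $\lvl_t$ contains no critical vertex, then every vertex of $L$ has valence $2$ (a regular interior vertex of $\Bb_p$ has valence $4$, with its two horizontal edges lying in $\lvl_t$), so $L$ is a finite-sheeted cover of $\lat_t$, i.e.\ a circle of the form $\Sb_p(m)$, which is a trivial metric cactus with a single cycle. When $t$ lies in the interior of an edge of $\Ib_p$, Lemma~\ref{lem:reg-lvl-sets} confirms this for every component. Thus the substantive case is a component $L$ of a critical level set that contains at least one critical vertex.

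For the local structure at a critical vertex $\bb$ of multiplicity $k$, I would invoke Lemma~\ref{lem:crit-vert} and its proof: the local expansion $p(z) \approx c + \frac{q(b)}{k+1}(z-b)^{k+1}$ shows that $\pb$ near $\bb$ is conjugate to $w \mapsto w^{k+1}$, and hence the $4(k+1)$ edges of $\Bb_p$ incident to $\bb$ alternate between horizontal (lying in $\lvl_t$) and vertical (lying in $\dir_u$ for $u = \arg i_A(c)$). Thus $\bb$ has valence $2(k+1)$ in $\lvl_t$, and these edges separate a small disk neighborhood of $\bb$ into $2(k+1)$ sectors that alternate between \emph{below} ($|p|<|c|$) and \emph{above} ($|p|>|c|$), with $k+1$ sectors of each type.

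For the global cactus structure of $L$, I would identify the $2$-cells of the candidate cactus diagram with the connected components of the open below set $\{|p|<|c|\} \cap \B_p^\inter$ adjacent to $L$. Each such component $R$ (a \emph{below region}) is bounded in $\B_p$ by some root circles inside and a closed subcomplex of $L$ on the outside. Since $\B_p$ has genus zero (Theorem~\ref{thma:metric-planar-complex}), $L$ is planar. The crucial claim is that at each critical vertex $\bb$ the $k+1$ local below sectors belong to $k+1$ \emph{distinct} below regions, yielding $k+1$ distinct simple cycles in $L$ through $\bb$, each bounding one such below region. Granted this, every edge of $L$ borders a unique below region on its below side and therefore lies in a unique cycle, and cycles can meet only at critical vertices since regular vertices of $L$ have valence $2$.

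The main obstacle is the distinctness claim, which I would establish by working with each below region $R$ as an open subset of $\C$. First, $R$ is simply connected: the complement $\C \setminus R$ consists of the closed above set $\{|p|\geq|c|\}$ together with the other below regions, all glued along shared portions of the lemniscate $\{|p|=|c|\}$, and the closed above set itself is connected and unbounded by the maximum modulus principle, which forbids any bounded above region. Second, because $p$ is holomorphic and $R$ is simply connected, the proper holomorphic map $p|_R \colon R \to \{|w|<|c|\}$ is a branched cover of a disk, and a standard analysis of polynomial lemniscates shows that $\partial R \subset \C$ is a simple closed Jordan curve whose only singularities are cusps at the critical points of $p$ lying on it. A simple Jordan curve visits each such critical vertex at most once, forcing $R$ to have exactly one below sector at each critical vertex on its boundary; combined with the $k+1$ total below sectors at $\bb$, this produces the required $k+1$ distinct below regions meeting at $\bb$.
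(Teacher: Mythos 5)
Your route is genuinely different from the paper's. The paper runs a Morse-theoretic sweep: it grows the sublevel sets $C(r)=\{z : \size{p(z)}\le r\}$ from $r=0$, shows each component stays contractible (a bounded complementary component would violate the Maximum Modulus Principle), and reads off the cactus structure from the merging of the $d$ initial disks at critical moduli. You instead analyze a single fixed level set statically, taking the below regions as the $2$-cells of the candidate cactus diagram and reducing everything to the claim that the $k+1$ below sectors at a critical vertex lie in $k+1$ distinct below regions. Both arguments ultimately rest on the same analytic input (maximum modulus), and your regular-vertex and local-sector analysis via Lemma~\ref{lem:crit-vert} matches the paper's.

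However, the step carrying the entire weight of your distinctness claim is asserted rather than proved: ``a standard analysis of polynomial lemniscates shows that $\partial R$ is a simple closed Jordan curve.'' Simple connectivity of $R$ does not imply this (a slit disk is simply connected with non-Jordan boundary), and neither does the observation that $p|_R$ is a proper branched cover of a disk; whether $\overline{R}$ can pinch at a critical point is exactly the question at issue. The fix is a second application of the maximum modulus principle, this time to the \emph{above} set. Suppose two below sectors at a critical vertex $b$ lie in the same region $R$. Join them by a path in $R$ and close it up through $b$ to get a Jordan curve in $\overline{R}\subset\{\size{p}\le r\}$. Since below and above sectors alternate around $b$, each of the two local sides of this curve at $b$ contains an above sector, so the bounded complementary component meets the open set $\{\size{p}>r\}$; as the curve lies in $\{\size{p}\le r\}$, that intersection is open and closed in $\{\size{p}>r\}$ and hence a bounded union of its components, contradicting the fact (which you already invoked) that every component of the above set is unbounded. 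A variant of the same argument, or the connectivity of the closed above set, is also needed for a point you leave implicit: that two distinct below regions meet in at most one vertex and that the resulting diagram is contractible, i.e.\ genuinely a cactus diagram and not a cyclic chain of $2$-cells.
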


\begin{proof}
  The topological part of the proof is Morse-theoretic.  For each $r
  \geq 0$, let $C(r) = \{z \mid \size{p(z)} \leq r\} \subset \C$. When
  $r$ is positive, we upgrade $C(r)$ from a topological subset of the
  plane to a planar cell complex by giving its boundary the cell
  structure of the corresponding critical level set in $\Bb_p$.  Note
  that every connected component of $C(r)$ is contractible since a
  bounded complementary component would lead to a contradiction of the
  Maximum Modulus Principle.  In particular, as $r$ continues to
  increase, the bounded region eventually must disappear and the final
  points to disappear will be isolated since the critical points of
  $p$ are discrete.  Such points would have a locally maximal modulus,
  thus provoking a contradiction.
  When $r=0$, $C(r)$ is the discrete set of roots and as $r$
  increases, it immediately becomes a set of $d$ disjoint closed
  topological disks.  Note that each connected component is a cactus
  diagram with only one $2$-cell.  As $r$ increases further, some of
  the growing closed disks intersect precisely when $r$ has the
  magnitude of a critical point of $p$.  When this happens the local
  picture is governed by Lemma~\ref{lem:crit-vert}. This establishes
  the valence requirements and the closed disks only overlap at
  isolated points because the critical values of $p$ are discrete.
  Thus, $C(r)$ remains a cactus diagram during this transition.  As a
  branched cover of $\Sb_p$ and the $1$-skeleton of a cactus diagram,
  the critical level set corresponding to $\partial C(r)$ is a metric
  cactus.  As $r$ increases further, annuli are attached to the
  boundary cycles of the connected components, and the space $C(r)$
  returns to being a disjoint union of closed topological disks, but
  now with strictly fewer connected components.  Proceeding in this
  way shows that every level set is a metric cactus.
  \end{proof}

\begin{figure}
  \includegraphics{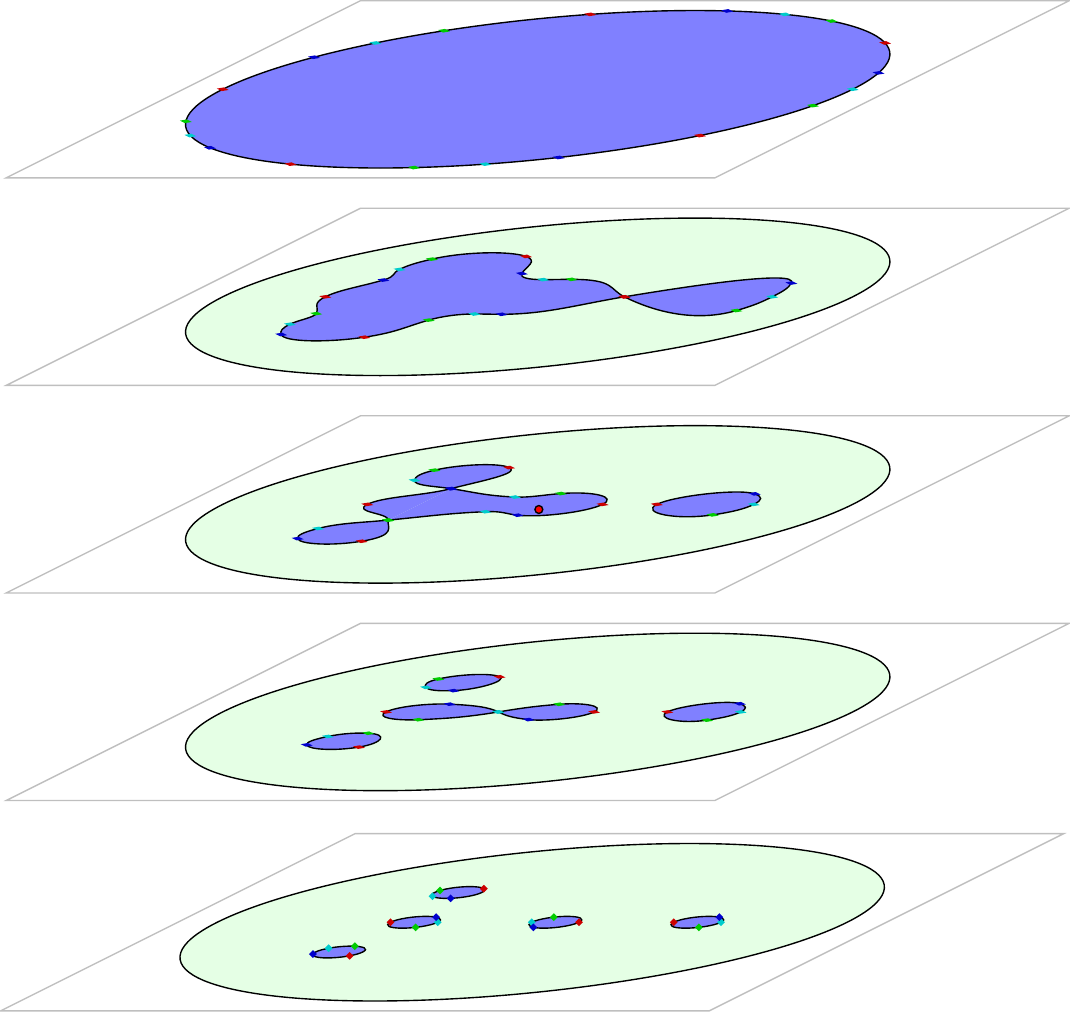}
\caption{The critical horizontal slices of the branched annulus
  $\Bb_p$ as embedded in $\D \times \I$, together with its two extreme
  level sets at the top and bottom of the solid
  cylinder.\label{fig:cactus-multipedal}}
\end{figure}

\begin{rem}[Multipedal pants and cactus diagrams]
  An alternative way to visualize the union of cactus diagrams that
  level sets bound is to use the embedding of $\Bb_p$ as a multipedal
  pair of pants inside $\D \times \I$.  The pair of pants have an
  ``inside'' and an ``outside''.  If you add disks to the portions of
  horizontal slices that are on the ``inside'', then the result is a
  union of cactus diagram.  The critical level sets corresponding and
  the corresponding cactus diagrams for our running example are shown
  in Figure~\ref{fig:cactus-multipedal}, together with the two extreme
  level sets at top and bottom the solid cylinder.
\end{rem}

A connected component of a critical direction set is a special type of
metric tree that we call a \emph{metric banyan}.  A metric banyan may
be of the form $\Ib_p$, as in the regular case, but it can also
contain branch points, as in Figure~\ref{fig:lvl-dir}.  

\begin{defn}[Banyan trees]
  Let $\Ib$ be a compact metric interval with a fixed cell structure
  and with oriented edges consistently pointing towards one of its
  endpoints.  A \emph{metric banyan tree} is a contractible branched
  cover of $\Ib$ together with a fixed embedding in the plane so that
  the incoming and outgoing edges strictly alternate in the clockwise
  cyclic order around one of its internal vertices.  Notice that every
  point in a metric banyan has a well defined \emph{height} given by
  its projection to $\Ib$.  A vertex of a metric banyan tree is called
  a source, sink or interior vertex based on its image in $\Ib$.
\end{defn}

\begin{rem}
The choice of name ``banyan tree'' is inspired by the behavior of the
real-life version, where branches grow off-shoots heading both towards
and away from the ground.  By adding the well-defined height as a
third dimension, one may notice that the lifts of the planar trees
described above mimic this behavior.
\end{rem}

\begin{lem}[Critical direction sets]\label{lem:crit-directions}
  Inside a branched annulus $\Bb_p$, every component of every
  direction set is a metric banyan. Moreover, each critical point of
  multiplicity $k$ labels an interior banyan vertex of valence
  $2(k+1)$.
\end{lem}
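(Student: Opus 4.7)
The plan is to determine the structure of each component of a direction set by combining a local model at each vertex with a global monotonicity argument, in direct parallel with the proof of Lemma~\ref{lem:critical-levels} but with the roles of height and argument exchanged. The regular case is already handled by Lemma~\ref{lem:reg-dir-sets}: a regular direction set is a disjoint union of $d$ copies of $\Ib_p$, each of which is trivially a banyan with no interior vertices. So I would focus on the critical case.

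First I would establish the local picture at each vertex of $\dir_u$. At a critical point $b$ of multiplicity $k$ whose critical value $c = p(b)$ lies on the longitude $\lng_u$, the estimate in the proof of Lemma~\ref{lem:crit-vert} gives $p(z) \approx c + \lambda(z-b)^{k+1}$ near $b$ for some nonzero $\lambda$. The preimage of $\lng_u$ in $\C$ under $i_A$ is the ray through $c$ based at the origin, so locally we seek $z$ near $b$ with $\lambda(z-b)^{k+1}$ a real scalar multiple of $c/|c|$, of either sign. The solutions form $2(k+1)$ equally spaced rays emanating from $b$, and as one reads them in the planar cyclic order, they alternate between the two open sides of $c$ along $\lng_u$. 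Hence $\bb = i_B(b)$ has valence $2(k+1)$ in $\dir_u$, and when the edges are oriented by pulling back an orientation from $\lng_u$, incoming and outgoing edges strictly alternate around $\bb$. At a regular vertex whose image is an interior $0$-cell of $\lng_u$, Lemma~\ref{lem:reg-vert} gives valence $4$ in $\Bb_p$, of which exactly two edges (one above, one below) lie in $\dir_u$. Boundary vertices, lying on the root circles or the circle at infinity, contribute valence $1$ and act as sources or sinks.

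Next I would show that every component of $\dir_u$ is a tree. Let $f = \hgt \circ p_A \colon \dir_u \to \I$. By Lemma~\ref{lem:product-preimages}, $\pb$ is a local isometry on each open edge of $\dir_u$, so $f$ is strictly monotonic on every open edge; in particular, any local extremum of $f$ on $\dir_u$ occurs at a vertex. The local analysis above shows that every interior vertex has both incoming and outgoing incident edges, so $f$ has no local extrema at interior vertices. A cycle in a component would force $f$ restricted to that cycle to attain a maximum and a minimum at interior vertices, a contradiction. Hence each component is contractible. Combined with the alternating cyclic order at interior vertices and the branched cover structure $f \colon \dir_u \to \lng_u \cong \Ib_p$, this exhibits each component as a metric banyan in which critical vertices of multiplicity $k$ have valence $2(k+1)$, as required.

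The main obstacle is ruling out cycles in the critical case, which is a global statement rather than a local one; I handle it by exploiting the strict monotonicity of the height function $f$ on each edge together with the absence of local extrema at interior vertices. Everything else reduces to the local model $z \mapsto z^{k+1}$ from Lemma~\ref{lem:crit-vert} and the regular product structure supplied by Lemma~\ref{lem:product-preimages}.
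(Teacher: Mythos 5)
Your treatment of the regular case and of the local picture at a critical vertex matches the paper's: both reduce the valence count and the alternation of incoming and outgoing edges to the local model $z\mapsto z^{k+1}$ from Lemma~\ref{lem:crit-vert}. The divergence, and the problem, is in the acyclicity step. You argue that a cycle $\gamma$ in a component of $\dir_u$ would force $f=\hgt\circ p_A$ restricted to $\gamma$ to attain a maximum at an interior vertex, contradicting the absence of local extrema of $f$ at interior vertices. But the maximum of $f|_\gamma$ at a vertex $v$ is only a local maximum of $f$ \emph{restricted to the cycle}; the vertex $v$ may carry additional edges of $\dir_u$ not belonging to $\gamma$ along which $f$ increases, so $v$ need not be a local extremum of $f$ on $\dir_u$, and no contradiction results. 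The conditions you actually invoke (strict monotonicity of $f$ on open edges, every interior vertex having both incoming and outgoing edges, even planarity plus alternation) do not suffice: an abstract planar graph consisting of two interior vertices $v_1, v_2$ at heights $t_1<t_2$ joined by two monotone edges, with two downward legs at $v_1$ and two upward legs at $v_2$ arranged so that incoming and outgoing edges alternate, satisfies all of them and still contains a cycle. Ruling out such configurations requires a genuinely global input about $p$.

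The paper supplies that input differently: it first proves Lemma~\ref{lem:remove-reg-dir}, showing that deleting the $d$ strips lying over a single open edge of $\Sb_p$ cuts the rank-$d$ free fundamental group of $\Bb_p$ down to a connected, contractible branched cover of a rectangle, and then deletes the strips over the remaining open edges one at a time; each further cut can only increase the number of components while keeping each one simply connected, and what remains at the end is exactly the union of the critical direction sets. If you prefer to stay closer to your Morse-theoretic framing, an alternative repair is analytic, in the spirit of the paper's proof of Lemma~\ref{lem:critical-levels}: a cycle in $\dir_u$ bounds a disk $D$ in the plane; $D$ cannot contain a root, since $p(\gamma)$ lies on a single ray from the origin and hence has winding number zero about $0$; so $\arg p$ is a single-valued harmonic function on $D$ that is constant on $\partial D$, forcing $p$ to be constant. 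Either way, some global argument must replace the purely local extremum count.
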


\begin{proof}
  By Proposition~\ref{prop:poly-maps}, every singular component of a
  direction set is a branched covering of $\Ib_p$.  The planar
  embedding, the local alternation of incoming and outgoing edges, and
  the valence requirement are all immediate, since the local picture
  near a branch point is governed by Lemma~\ref{lem:crit-vert}.  It
  only remains to show that every component is contractible and thus a
  tree.  This is clear for regular direction sets by
  Lemma~\ref{lem:reg-lvl-sets}, so we only need to consider the
  critical ones.  Let $T$ be an open edge in $\Sb_p$ and consider the
  inverse image of the strip $T\times \Ib_p$ under $\pb$.  By
  Lemma~\ref{lem:remove-reg-dir}, removing these $d$ strips results in
  a connected and contractible branched cover of a rectangle.  If we
  repeat this process for each other open edge in $\Sb_p$, then these
  additional ``cuts'' increase the number of connected components, but
  each component remains simply connected. After all such strips have
  been removed, all that remains is the disjoint union of all critical
  direction sets.  This shows that every component of every critical
  direction set is simply connected and contractible.
\end{proof}

We conclude this section with two brief remarks that will be explored
in greater depth in future articles by the authors.  The first remark
highlights the underlying similarity between banyan trees and cacti.

\begin{rem}[Banyan and cactus conversions]\label{rem:banyan-cactus-convert}
  Let $\Gamma$ be a level set of $\Bb_p$ and let $T$ be an open edge
  in $\Sb_p$. Then removing the preimages of $T\times\Ib_p$ (as
  described in Lemma~\ref{lem:remove-reg-dir}) from $\Bb_p$ removes
  $d$ open edges from $\Gamma$; one can show that the resulting metric
  graph is a metric banyan.  Conversely, if $\Gamma$ is a direction
  set of $\Bb_p$ and $T$ is an open edge in $\Sb_p$, then removing the
  preimages of $T\times\Ib_p$ turns $\Bb_p$ into a $4d$-gon that is a
  connected $d$-fold branched cover of the rectangle $\Ab_p \setminus
  (T \times \Ib_p)$ (Lemma~\ref{lem:remove-reg-dir}).  Every fourth
  side of the $4d$-gon contains a source vertex of $\Gamma$, every
  fourth side of the $4d$-gon contains a sink vertex of $\Gamma$ and
  none of these $2d$ sides are adjacent.  One can add a collection of
  $d$ isometric metric edges to $\Gamma$ connecting each source vertex
  to the ``next'' sink vertex in the counter-clockwise order.  This
  can be done simultaneously in a plane and the result is a graph
  where each component is a metric cactus.  This close connection
  between metric banyans and metric cacti is less surprising once one
  uses Lemma~\ref{lem:remove-reg-dir} to restrict a branched annulus
  to a branched rectangle. There is an obvious symmetry between the
  horizontal and vertical foliations of the rectangle and this
  produces a symmetry between metric banyans and restricted portions
  of metric cacti.
\end{rem}

The second remark focuses on realizability.

\begin{rem}[Banyan and cactus realizations]\label{rem:banyan-cactus-realize}
  One can prove stronger versions of Lemma~\ref{lem:critical-levels}
  and Lemma~\ref{lem:crit-directions} that include an assertion about
  the converse direction.  Concretely, after adding mild and obvious
  restrictions, every metric cactus and every metric banyan arises as
  a component of a level set or a direction set for some complex
  polynomial $p$.
\end{rem}

\section{Partitions}\label{sec:partitions}

The branched annulus of a complex polynomial contains a wealth of
combinatorial (and geometric) data encoded in its level sets and
direction sets.  In this section we extract some of this combinatorial
data and highlight an intrinsic connection between the geometry of
complex polynomials and the combinatorics of noncrossing
partitions. We begin with partitions.

\begin{defn}[Partitions]
  Let $A$ be a set of size $d$.  A \emph{partition} of $A$ is a
  collection of pairwise disjoint subsets (called \emph{blocks}) whose
  union is $A$.  Let $\Pi_A$ denote the set of all such
  partitions. Given partitions $\lambda, \mu \in \Pi_A$, $\lambda$ is
  a \emph{refinement} of $\mu$ if each block of $\lambda$ is contained
  in some block of $\mu$.  When this occurs we write $\lambda \leq
  \mu$, or $\lambda < \mu$ when $\lambda$ and $\mu$ are not
  equal. This defines a partial order which makes $\Pi_A$ into a
  \emph{lattice} since meets and joins are well-defined.  The unique
  minimum partition is the \emph{discrete partition} where every block
  is a singleton, and the unique maximum partition is the
  \emph{trivial partition} where there is only one block.  A
  \emph{chain of partitions} is a sequence $\lambda_1 < \cdots <
  \lambda_k$.
\end{defn}

The focus here is on $\Pi_\rts$, the partition lattice of the roots of
$p$.

\begin{defn}[Partitions from level sets]\label{def:lat-partition}
  As described in the proof of Lemma~\ref{lem:critical-levels}, for
  every $t \in \I^\inter$, the level set $\lvl_t$ of $p$ can be viewed
  as the boundary of a disjoint union of cactus diagrams in $\C$ and
  every root is contained in the interior of one of these diagrams.
  We define a partition $\lambda_t$ by placing two roots in the same
  block if and only if they are contained in the same connected
  component, i.e. the same cactus diagram.  Note that as $t$
  increases, the cactus diagrams grow and merge but they do not split,
  and, as a consequence $\lambda_t$ can only increase in the partial
  order on $\Pi_\rts$.
\end{defn}

\begin{rem}[Regular level sets as separating multicurves]
  A \emph{multicurve} is a disjoint union of simple closed curves on a
  given (connected) surface, and a multicurve is \emph{separating} if
  its complement is disconnected. Since each latitude $\lat_t$ in the
  open annulus $\A^\inter$ disconnects the (closed) annulus $\A$, its
  preimage $\lvl_t$ separates $\B_p$.  Moreover, when $t$ is regular,
  $\lvl_t$ is a separating multicurve. The partition $\lambda_t$ can
  then be viewed as having blocks where two roots belong to the same
  block if and only if the corresponding root circles belong to the
  same complementary component of $\lvl_t$.
\end{rem}

\begin{rem}[Partitions and $\Ib_p$]
  For every point $t \in \Ib_p$ there is a partition $\lambda_t \in
  \Pi_\rts$.  As is usual in Morse theory, the cactus diagrams remain
  homotopic so long as $t$ does not increase through a critical value.
  Thus every point in an open edge of $\Ib_p$ is assigned the same
  partition, and this partition agrees with the partition assigned to
  its lower endpoint.  Moreover, the partition assigned to the first
  open edge (with $-1$ as an endpoint) is the discrete partition and
  the partition assigned to the last open edge (with $1$ as an
  endpoint) is the trivial partition.  On the other hand, as $t$
  approaches an interior vertex of $\Ib_p$ from below, there are
  distinct components that are merging together, which means that the
  partition assigned to this interior vertex is strictly higher
  inside $\Pi_\rts$ compared to the partition assigned to the open
  edge directly below it.  As a consequence, the partitions assigned
  to the open edges of $\Ib_p$ are distinct representatives of every
  partition assigned to a level set, and they naturally form a chain
  in $\Pi_\rts$.
\end{rem}

We give two examples.

\begin{example}\label{ex:extreme-partitions}
  Let $p(z) = a(z-b)^d+c$ be as in Example~\ref{ex:lvl-dir}, and let
  $t$ be the height of $i_A(c)$, its unique critical value.  There are
  only two open edges in $\Ib_p$. The partition assigned is discrete
  on the half-open interval $[-1,t)$ and trivial on the closed
    interval $[t,1]$.  Thus the chain of partitions is simply:
    $\textrm{Discrete} < \textrm{Trivial}$.
\end{example}

\begin{figure}
    \centering
    \includegraphics[scale=0.6]{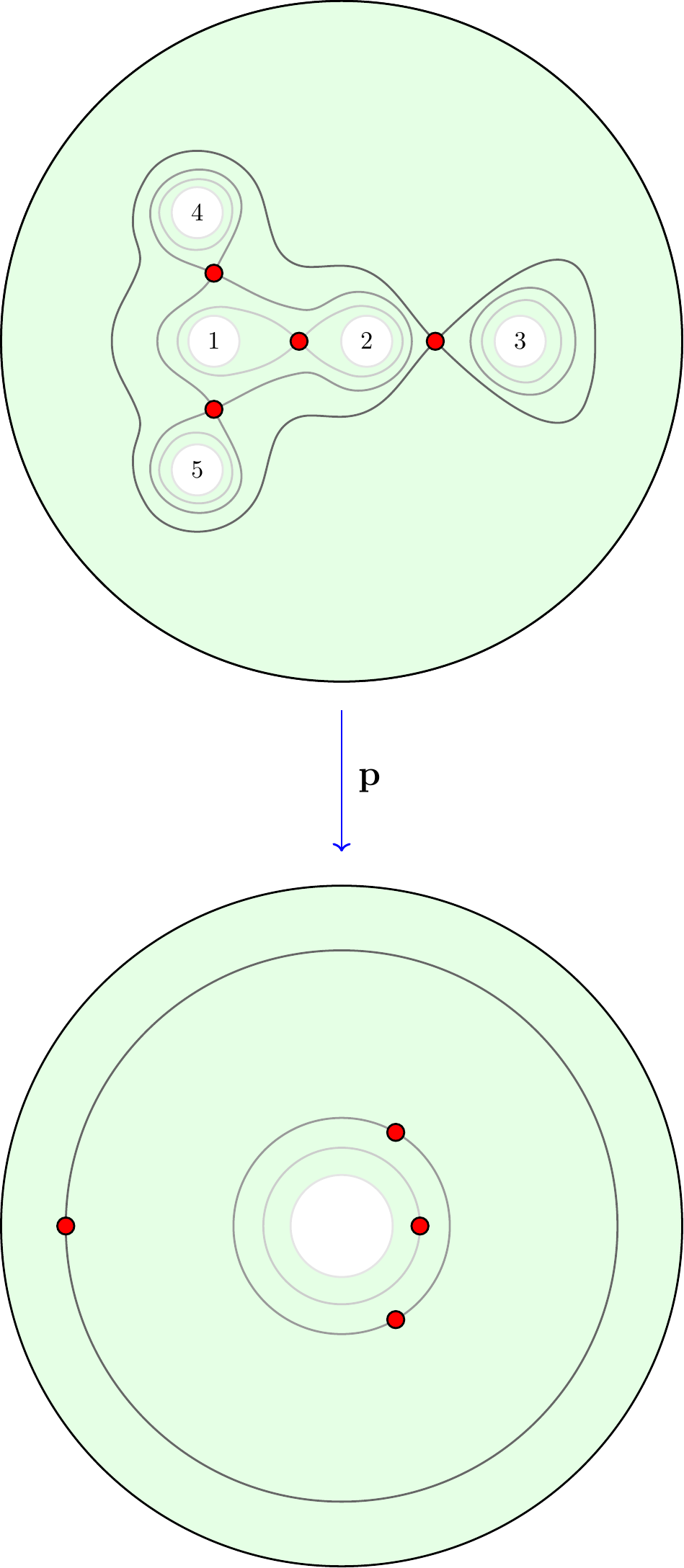}
    \caption{The union of all critical level sets in the branched
      annulus $\Bb_p$.  The regular level sets between them determine
      a chain in $\Pi_\rts$.\label{fig:lat-pullback}}
\end{figure}

\begin{example}\label{ex:partition-chain}
  The critical level sets for our standard running example are shown
  in Figure~\ref{fig:lat-pullback}.  As should be clear from the
  figure, the associated chain in $\Pi_\rts$ is:
  \[ \textrm{Discrete} < \{\{a_1,a_2\},\{a_3\},\{a_4\},\{a_5\}\} <
  \{\{a_1,a_2,a_4,a_5\},\{a_3\}\} < \textrm{Trivial}.\]
\end{example}

\begin{rem}[Big lemniscate configurations]
  The chain of partitions produced by the regular level sets of $p$ is
  a natural combinatorial object, although we are unaware of any
  explicit references to it in the literature. It is worth noting that
  this chain is related to what Catanese and Paluszny refer to as a
  ``big lemniscate configuration'' in \cite{catanese-paluszny},
  although they restrict their consideration to the lemniscate-generic
  case, where the combinatorics are simpler.  A lemniscate-generic
  polynomial of degree $d$ has $d-1$ distinct critical values, all of
  multiplicity one, and these critical values all have distinct
  moduli.  The cell structure of $\Ib_p$, in this case, has $d$ open
  edges and the associated chain in $\Pi_\rts$ is a maximal chain.
\end{rem}

With a little more work, the chain of partitions determined by the
regular level sets of $\Bb_p$ can be converted into a chain of
noncrossing partitions.  

\begin{defn}[Noncrossing partitions]\label{def:noncrossing}
  Let $A$ be the vertices of a convex $d$-gon in the plane. A
  partition $\lambda \in \Pi_A$ is \emph{noncrossing} if for every
  pair of distinct blocks in $\lambda$, the convex hull of the
  vertices in one block is disjoint from the convex hull of the
  vertices in the other.  The collection of noncrossing partitions
  inside $\Pi_A$ form an induced subposet.  We write $\NC_A$ for this
  subposet, where $A$ is now viewed as a set with a fixed cyclic
  ordering.  If $A$ is any finite set of size $d$ and we fix a cyclic
  ordering of its elements, then we can use this cyclic ordering to
  label the vertices of a $d$-gon and then create an induced
  noncrossing subposet of $\Pi_A$.
\end{defn}

\begin{lem}[Open edges and cyclic orderings]\label{lem:open-edge-cyclic}
  If $u\in \Sb_p$ is not a vertex, then $\dir_u$ is a regular
  direction set, and $\dir_u$ determines a natural cyclic ordering of
  the roots of $p$.  In addition, points in the same open edge of
  $\Sb_p$ determine the same cyclic ordering of $\rts$.
\end{lem}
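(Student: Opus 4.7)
The plan is to use the product structure of regular direction sets given by Lemma~\ref{lem:reg-dir-sets} and to read off the cyclic ordering from the circle at infinity.

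First I would verify that $\dir_u$ is regular. By Definition~\ref{def:crit-val-cplx}, the vertices of $\Sb_p$ are precisely the points of $\arg(i_A(\cvl))$, so $u$ is a vertex of $\Sb_p$ if and only if $\lng_u$ is a critical longitude. Hence when $u$ is not a vertex, $\dir_u = p_A^{-1}(\lng_u)$ is a regular direction set.

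Next I would extract the cyclic ordering. Let $U$ be the open edge of $\Sb_p$ containing $u$. Lemma~\ref{lem:reg-dir-sets} says the preimage of $U\times\Ib_p$ consists of $d$ disjoint rectangles, each a metric product of $\Ib_p$ with the edge $U$. Restricting to the fiber over $u$, the direction set $\dir_u$ is therefore a disjoint union of $d$ copies of $\Ib_p$. Each copy is an arc with one endpoint on the bottom boundary of $\Bb_p$ and the other on the top boundary. By Remark~\ref{rem:Bb-boundary}, the bottom boundary consists of $d$ root circles (one per root in $\rts$) and the top boundary is a single copy of $\Sb_p(d)$, which covers $\Sb_p$ with degree $d$. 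Since each component of $\dir_u$ is a connected arc and the root circles are disjoint, each component has its lower endpoint on exactly one root circle, so it picks out exactly one root of $p$. Meanwhile, the $d$ upper endpoints of the components of $\dir_u$ are precisely the $d$ preimages of $u$ in $\Sb_p(d) \to \Sb_p$, and these points inherit a cyclic ordering from the circle at infinity. Transporting this cyclic ordering along the components of $\dir_u$ to the root circles at the bottom produces the desired cyclic ordering of $\rts$.

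For the second assertion, suppose $u, u' \in U$ for the same open edge $U$ of $\Sb_p$. The $d$ disjoint rectangles making up $\pb^{-1}(U \times \Ib_p)$ from Lemma~\ref{lem:reg-dir-sets} are indexed independently of the choice of point in $U$: each rectangle has a fixed bottom edge on a single root circle and a fixed top edge on a specific arc of $\Sb_p(d)$ lying above $U$. These top edges appear in a fixed cyclic order on the circle at infinity, so the bijection between root circles and points above $u$ constructed in the previous paragraph agrees with the one constructed above $u'$. Thus the cyclic ordering of $\rts$ is constant on the open edge $U$.

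The only subtle point is making sure that the $d$ upper endpoints of $\dir_u$ really are the full preimage of $u$ in $\Sb_p(d)$ and that they do inherit the ambient cyclic ordering of the circle at infinity. This follows because the restriction of $\pb$ to the circle at infinity is the covering map $\Sb_p(d) \to \Sb_p$ of Definition~\ref{def:circle-covers}, and $\dir_u$ meets the boundary of $\Bb_p$ transversally in precisely $p_A^{-1}(u) \cap \partial \Bb_p$; on the top boundary this is exactly the fiber of the cover over $u$.
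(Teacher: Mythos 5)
Your proposal is correct and follows essentially the same route as the paper: both identify the $d$ disjoint strips over $U\times\Ib_p$, match each strip's lower edge on a root circle with its upper edge on the circle at infinity, and read the cyclic order counter-clockwise around $\Sb_p(d)$, with constancy over the open edge $U$ following from the fact that the strips themselves do not depend on the choice of $u\in U$. The only detail you leave implicit (as does the paper) is that distinct strips meet distinct root circles, which holds because each root circle is a copy of $\Sb_p$ and hence contains exactly one edge over $U$, so the $d$ strips and $d$ root circles are matched bijectively.
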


\begin{proof}
   Let $U$ be the open edge of $\Sb_p$ that contains $u$.  The
   preimage of the strip $U\times\Ib_p \subset \A$ consists of $d$
   disjoint rectangular strips in $\Bb_p$.  Each strip connects a
   unique copy of $U$ on a root circle to one of $d$ copies of $U$ on
   the circle at infinity. The bijection from roots to root circle
   thus extends to copies of $U$ in the circle at infinity.  And note
   that the same bijection is produced if we use only use the regular
   direction set $\dir_u$ for any $u\in U$.  Reading counter-clockwise
   around the circle at infinity thus produces a cyclic ordering of
   the set $\rts$ of roots.
\end{proof}

We illustrate the cyclic ordering of the roots using our running
example.

\begin{example}[Cyclic orderings]\label{ex:cyclic-ord}
  In our running example, there are four open edges in $\Sb_p$ and
  thus four cyclic orderings of the roots.  For readability, we
  label the five roots $a$, $b$, $c$, $d$ and $e$ instead of $a_1$,
  $a_2$, $a_3$, $a_4$ and $a_5$, respectively.  Starting at the
  positive $x$-axis and proceeding counter-clockwise around the
  boundary, these four cyclic orderings are: $(c,a,d,e,b)$,
  $(c,d,a,e,b)$, $(b,d,a,e,c)$, and $(b,d,e,a,c)$.  See
  Figure~\ref{fig:long-pullback}.  In the electronic version of this
  article, where the colors are visible, the first cyclic ordering is
  determined by the regular direction sets sandwiched between the
  critical direction sets that are aqua and dark blue, respectively.
  The second is between dark blue and red, the third is between red
  and green and the fourth is between green and aqua.
\end{example}

\begin{figure}
    \centering
    \includegraphics[scale=0.6]{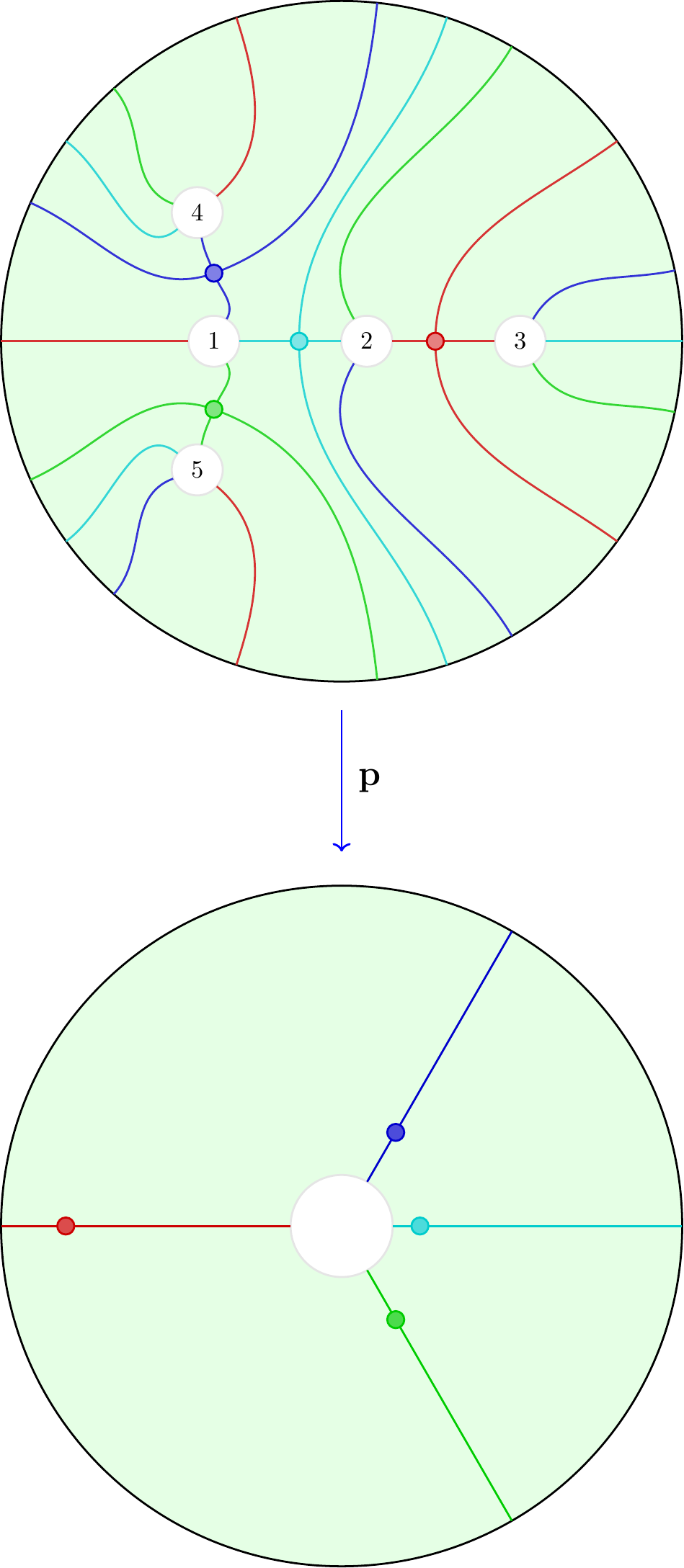}
    \caption{The union of all critical direction sets in the branched
      annulus $\Bb_p$.  The regular level sets between them determine
      four different cyclic orderings of the set
      $\rts$.\label{fig:long-pullback}}
\end{figure}

By combining the combinatorial information coming from a regular
direction set with that coming from a regular level set we can 
see that all of these root partitions are noncrossing.

\begin{prop}[Level sets and noncrossing partitions]\label{prop:nc-level-parts}
  The partition of the roots determined by a regular level set is
  noncrossing with respect to the cyclic ordering of the roots
  determined by a regular direction set.
\end{prop}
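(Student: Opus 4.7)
The plan is to exploit the planar embedding $j_{BD}\colon \B_p \hookrightarrow \D$ from Definition~\ref{def:drawing-branched-annuli}. In that picture the circle at infinity is the outer boundary of $\D$, the $d$ root circles are small Jordan curves in the interior, each component of the regular direction set $\dir_u$ is a simple arc from one root circle to a point on the outer boundary (and by Lemma~\ref{lem:open-edge-cyclic} the cyclic order of these endpoints on the outer boundary is exactly the cyclic order of $\rts$), and each component of the regular level set $\lvl_t$ is a Jordan curve in $\D$ whose bounded region contains precisely the root circles belonging to one block of the partition $\lambda_t$ (Lemma~\ref{lem:critical-levels}). Compare Figures~\ref{fig:lat-pullback} and~\ref{fig:long-pullback}.

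The first step is to show that each component of $\dir_u$ meets $\lvl_t$ in exactly one point. Because $p_A$ restricts to a homeomorphism from each such component onto the longitude $\lng_u$, and because $\lng_u \cap \lat_t$ equals the single point $(1,u,t)$, this is immediate. Consequently, if $\alpha_a$ denotes the arc of $\dir_u$ emanating from a root $a$ in block $S$, then $\alpha_a$ crosses exactly the one component $C_S$ of $\lvl_t$ that bounds the cactus diagram for $S$, and is disjoint from every other component of $\lvl_t$.

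With this in hand, the noncrossing claim reduces to a standard Jordan curve argument. Suppose, for contradiction, that four roots appear in the cyclic order $a,b,c,d$ on the outer boundary with $a,c$ in a common block $S$ and $b,d$ in a common block $S' \neq S$. Form a Jordan curve $\Gamma \subset \D$ by concatenating $\alpha_a$, the sub-arc of the outer boundary from the endpoint of $\alpha_a$ to the endpoint of $\alpha_c$ that passes through the endpoint of $\alpha_b$, the reverse of $\alpha_c$, and the sub-arc of $C_S$ joining the unique crossing points of $\alpha_a$ and $\alpha_c$ with $C_S$. The curve $C_{S'}$ is disjoint from $\Gamma$: it is disjoint from $C_S$ since components of $\lvl_t$ are disjoint, disjoint from $\alpha_a$ and $\alpha_c$ by the one-crossing property above, and disjoint from the outer boundary and from $C_S$ by construction. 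Therefore $C_{S'}$ lies entirely in one of the two components of $\D \setminus \Gamma$. The arcs $\alpha_b$ and $\alpha_d$ both emanate from $C_{S'}$ and, by the same disjointness considerations (they cross $\lvl_t$ only at $C_{S'}$ and are disjoint from $\alpha_a, \alpha_c$ since the components of $\dir_u$ are pairwise disjoint), they remain in the component of $\D \setminus \Gamma$ that contains $C_{S'}$. This forces the endpoints of $\alpha_b$ and $\alpha_d$ to lie in the same sub-arc of the outer boundary determined by the endpoints of $\alpha_a$ and $\alpha_c$, contradicting the cyclic order $a,b,c,d$.

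The principal obstacle is not the Jordan curve argument itself but the bookkeeping needed to justify the planar picture — verifying that a level set embeds in $\D$ as a disjoint collection of Jordan curves with the nested cactus structure of Lemma~\ref{lem:critical-levels}, and that the direction arcs intersect each such curve at most once and only the one prescribed by the block of their starting root. Once this setup is in place the contradiction above finishes the proof.
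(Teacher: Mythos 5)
Your proposal is correct and follows essentially the same route as the paper: both arguments pass to the planar drawing $j_{BD}$, observe that the components of $p_A^{-1}(\lat_t \cup \lng_u)$ are disjoint connected planar sets each tying the boundary endpoints of one block together, and conclude noncrossing from planarity. The only difference is that the paper compresses the final separation step into one sentence, whereas you make the Jordan curve argument explicit.
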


\begin{proof}
  Fix $u\in \Sb_p$ and $t\in \Ib_p$ so that neither one is a vertex.
  By Lemma~\ref{lem:open-edge-cyclic}, $u$ determines a corresponding
  cyclic ordering of the set $\rts$ and by
  Definition~\ref{def:noncrossing} this determines a subposet $\NC_\rts$
  inside $\Pi_\rts$.  Let $C$ be the union of the latitude circle
  $\lng_t$ and the longitude line $\lat_u$, and let $B = p_A^{-1}(C)$.
  A connected component of $B$ contains both the $1$-skeleton of a
  single cactus diagram, and the lines that connect some of the root
  circles to preimages of $u$ in the circle at infinity.  These
  connected components show how to view a block of the partition
  $\lambda_t$ as a collection of preimages of $u$ in the circle at
  infinity.  Since all of the components of $B$ are drawn in the disk
  without crossing, the convex hulls of the collections of points in
  the circle at infinity also do not cross.  In particular,
  $\lambda_t$ is in the subposet $\NC_\rts$.
\end{proof}

The following corollary is now immediate.

\begin{cor}[Chains of noncrossing partitions]\label{cor:nc-chains}
  The chain of partitions determined by the regular level sets is
  noncrossing with respect to the cyclic ordering of the roots
  determined by any regular direction set.
\end{cor}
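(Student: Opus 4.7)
The plan is to simply apply Proposition~\ref{prop:nc-level-parts} to each individual partition in the chain. Fix a regular direction set $\dir_u$ with $u \in \Sb_p$ not a vertex, and let $\lambda_{t_1} < \lambda_{t_2} < \cdots < \lambda_{t_k}$ be the chain of partitions determined by the regular level sets of $\Bb_p$, where each $t_j$ is chosen from a distinct open edge of $\Ib_p$. By Proposition~\ref{prop:nc-level-parts}, each $\lambda_{t_j}$ is noncrossing with respect to the cyclic ordering of $\rts$ determined by $\dir_u$, and therefore each $\lambda_{t_j}$ lies in the subposet $\NC_\rts \subset \Pi_\rts$.

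Because $\NC_\rts$ carries the order induced from $\Pi_\rts$, any totally ordered sequence in $\Pi_\rts$ whose members all belong to $\NC_\rts$ is automatically a chain in $\NC_\rts$. Applying this observation to $\lambda_{t_1} < \cdots < \lambda_{t_k}$ gives the desired chain of noncrossing partitions. There is no real obstacle here; the corollary is a pointwise consequence of the proposition together with the fact that the noncrossing condition depends only on the choice of cyclic ordering, which is fixed once the regular direction set $\dir_u$ is fixed.
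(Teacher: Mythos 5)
Your argument is correct and is exactly what the paper intends: it declares the corollary ``immediate'' from Proposition~\ref{prop:nc-level-parts}, and your pointwise application of that proposition to each $\lambda_{t_j}$, combined with the observation that $\NC_\rts$ is an induced subposet of $\Pi_\rts$, is precisely that immediate deduction. Nothing further is needed.
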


\begin{rem}
It is worth noting that connections between complex polynomials and
noncrossing partitions have appeared elsewhere in the literature
(e.g. \cite{martin07} and \cite{savitt09}), although in slightly
different manners. We are unaware of any previous results in the
literature that are similar to Proposition~\ref{prop:nc-level-parts}
and Corollary~\ref{cor:nc-chains}.
\end{rem}

\section{Factorizations}\label{sec:factorizations}

We now shift our attention from the chain of noncrossing partitions
determined by the collection of regular level sets to the
factorization of a $d$-cycle determined by the collection of regular
direction sets.  As a first step we note that the cyclic orderings
described in Lemma~\ref{lem:open-edge-cyclic} can be converted to
linear orderings by focusing on a particular edge in the circle at
infinity.

\begin{defn}[Linear orderings]\label{def:linear-orderings}
  Let $U'$ be an open edge in the circle at infinity in $\Bb_p$ and
  let $U$ be the open edge $\Sb_p$ to which it projects.  First label
  the preimages of $U$ in the circle at infinity using the bijection
  from Lemma~\ref{lem:open-edge-cyclic}.  Starting at $U'$ and
  proceeding in a counter-clockwise fashion, we record the linear
  order in which the root labels occur.  We use square brackets,
  rather than parentheses, to indicate that this is a linear ordering
  rather than a cyclic ordering.  
\end{defn}

The various linear orderings of the roots determined by the edges in
the circle at infinity, can be used to create a sequence of
permutations whose product is an $d$-cycle.  We begin by showing how
that works in our running example.

\begin{example}\label{ex:factorization}
  The circle at infinity in our running example has $20$ open edges.
  Using the same conventions as in Example~\ref{ex:cyclic-ord}, we
  have, for the edge in the domain in the first quadrant with one
  endpoint on the positive real axis, the linear order is
  $[c,a,d,e,b]$.  Then next several linear orders, proceeding in a
  counter-clockwise fashion, are $[c,d,a,e,b]$, $[b,d,a,e,c]$,
  $[b,d,e,a,c]$ and $[a,d,e,b,c]$.  There are $15$ more linear orders.
  Of course, the fifth linear order is just a cyclic permutation of
  the first since they correspond to the same set of cyclically
  ordered edges.  Adjacent linear orders in this list give a
  description of a permutation of the set $\{1,2,3,4,5\}$ of positions
  in two line notation.  The permutation from the first to the second
  linear order is $(2,3)$ since the $a$ and $d$ entries in positions
  $2$ and $3$ are swapped.  The next permutation is $(1,5)$, swapping
  $c$ and $b$, then $(3,4)$ swapping $a$ and $e$, and finally $(1,4)$
  swapping $a$ and $b$.  Note that the product (with the appropriate 
  choice of conventions for multiplying permutations) is 
  $(1,4)(3,4)(1,5)(2,3) = (1,2,3,4,5)$ as expected.
\end{example}

\begin{rem}[Banyans and permutations]
  Given the fact that critical direction sets separate adjacent linear
  and cyclic orderings, these permutations can actually be read off of
  the corresponding banyan trees, specifically from the banyan trees
  that include branch points.  For example, the dark blue critical
  direction set has only one branched component, the one with source
  vertices on the $a_1=a$ and $a_4=d$ root circles.  As a consequence,
  the difference between the first linear order $[c,a,d,e,b]$ and the
  second $[c,d,a,e,b]$ involves a swapping of the roots $a$ and $d$.
\end{rem}

In order to describe the permutations these trees produce, we first
introduce a new noncrossing object, known in the literature as a
primitive $d$-major.

\begin{defn}[Real noncrossing partitions]
  Let $d$ be a positive integer, let $\S(d) \subset \C$ be the circle
  of radius $d$ (and circumference $2\pi d$), and let $\S(d) \to \S$
  be the natural covering map sending $z \mapsto (z/d)^d$.  For each
  point $u \in \S$, its $d$ preimages are equally spaced around
  $\S(d)$ and they form the vertex set of regular $d$-gon.  Thus we
  can define a noncrossing partition on the preimages of $u$.  A
  \emph{real noncrossing partition $\lambda$} a choice of a
  noncrossing partition for each point $u \in \S$ subject to a
  compatibility condition, which requires that the convex hull of
  every block of every selected noncrossing partition can be
  simultaneously drawn inside the disk that $\S(d)$ bounds, while
  remaining pairwise disjoint.  One consequence of this compatibility
  condition is that all but finitely many of the selected noncrossing
  partitions must be the discrete partition.  Note that the set of all
  real noncrossing partitions comes with a natural partial order.  One
  real noncrossing partition is less than or equal to another if for
  each $u\in \S$ the noncrossing partition selected by the first is
  less than or equal to the noncrossing partition selected by the
  second in the noncrossing partition lattice based on the preimages
  of $u$.
\end{defn}

\begin{rem}
  Real noncrossing partitions have already appeared in the literature
  with a different definition and under a different name.  In
  \cite{thurston19} they are called \emph{primitive $d$-majors}.  If
  each of the non-trivial convex hulls in the noncrossing partitions
  of a real noncrossing partition are shrunk to a point in a planar
  fashion, the result is a cactus diagram with a metric cactus as its
  boundary.  In fact, real noncrossing partition are in natural
  bijection with metric cacti that are branched $d$-fold covers of
  $\S$.
\end{rem}

\begin{defn}[Banyans and real noncrossing partitions]
  For each $u\in \S$, consider the $d$ points in $\dir_u$ which lie on
  the circle at infinity. Define a partition of these points so that
  two points belong to the same block if and only if they lie on the
  same connected component of $\dir_u$.  For regular direction sets,
  this is the trivial partition.  The proof that these partitions are
  noncrossing is essentially the same as in the proof of
  Proposition~\ref{prop:nc-level-parts}.  Since all of the components
  of $\dir_u$ are drawn in the disk without crossing, the convex hulls
  of the corresponding collections of points in the circle at infinity
  also do not cross.  This also shows that the noncrossing partitions
  associated to different points $u$ are compatible, so that the full
  banyan foliation of $\B_p$ determines a real noncrossing partition
  associated to the polynomial $p$.
\end{defn}

The factorization of the $d$-cycle illustrated in
Example~\ref{ex:factorization} is closely related to the real
noncrossing partition of the banyan foliation.  The following result
precisely describes the relationship, even though its proof is merely
sketched.

\begin{prop}[Real noncrossing partitions and noncrossing hypertrees]
  Let $\lambda$ be the real noncrossing partition associated to the
  foliation of $\B_p$ by banyan trees. If $u\in \Sb_p$ is a point that
  is not a vertex, then $\lambda$ corresponds to a noncrossing
  hypertree and a factorization of a $d$-cycle.
\end{prop}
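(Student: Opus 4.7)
The plan is to read the $d$-cycle factorization and the hypertree structure directly off the banyan foliation. Fix a regular $u \in \Sb_p$ and let $[\pi_0(1), \ldots, \pi_0(d)]$ be the linear ordering of roots obtained from $\dir_u$ via Definition~\ref{def:linear-orderings}. Rotate $u$ counterclockwise through the critical directions $u_1, \ldots, u_m$ of $\Sb_p$, and for each $i$ let $[\pi_i(1), \ldots, \pi_i(d)]$ denote the linear ordering recorded just after crossing $u_i$. Consecutive orderings differ by a permutation $\sigma_i$, and I will establish three claims: (a) each $\sigma_i$ is a product of disjoint cycles, each acting on a set of consecutive positions; (b) the product $\sigma_m \cdots \sigma_1$ equals the $d$-cycle $(1\ 2\ \cdots\ d)$; and (c) the collection of cyclic blocks from (a) is the hyperedge set of a noncrossing hypertree that records $\lambda$ at $u$.

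For (a), by Lemma~\ref{lem:crit-directions} each branched component $\Gamma$ of $\dir_{u_i}$ has a single interior vertex $\bb$ labeled by a critical point of some multiplicity $k$, with $k+1$ source leaves on root circles and $k+1$ sink leaves on the circle at infinity, strictly alternating around $\bb$. Lemma~\ref{lem:crit-vert} asserts that $p$ behaves like $w \mapsto w^{k+1}$ in an affine neighborhood of the critical point, so crossing $u_i$ performs a cyclic permutation of the $k+1$ sink positions. The regular strips on either side of $u_i$ (cf.\ Lemmas~\ref{lem:reg-dir-sets} and~\ref{lem:remove-reg-dir}) force these $k+1$ sinks to occupy consecutive positions in the linear ordering, so $\sigma_i$ is a product of cycles on consecutive blocks, one for each branched component of $\dir_{u_i}$. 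For (b), composing the $\sigma_i$ tracks how the labeling of sinks on the circle at infinity $\Sb_p(d)$ shifts as $u$ rotates once around $\Sb_p$; by Remark~\ref{rem:near-infinity} and Definition~\ref{def:circle-covers}, this is a cyclic shift by one, yielding exactly $(1\ 2\ \cdots\ d)$.

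For (c), I would apply Lemma~\ref{lem:remove-reg-dir} iteratively: removing the preimages of each regular strip $T\times\Ib_p$ (one per open edge of $\Sb_p$) from $\Bb_p$ produces a disjoint union of contractible pieces, one for each connected component of the union of all critical direction sets. Contracting every branched banyan component to a point and dualizing the resulting planar incidence pattern produces a tree, so the collection of cyclic blocks from (a) is the hyperedge set of a hypertree. Noncrossingness follows exactly as in Proposition~\ref{prop:nc-level-parts}: all banyan components and connecting strips are drawn in the disk $\D$ without crossings, so the associated convex hulls in the circle at infinity cannot cross. The block decomposition at $u$ reads off $\lambda$ by construction, so $\lambda$ corresponds both to this noncrossing hypertree and, via the classical bijection between noncrossing hypertrees and minimal cycle factorizations of $(1\ 2\ \cdots\ d)$, to the factorization from (b). The main obstacle is Step (a): verifying that the cycle at a branch point of multiplicity $k$ acts on $k+1$ genuinely consecutive positions (rather than some scattered subset) requires the planar banyan embedding to be used in an essential way, tracking the local alternation of source and sink leaves around $\bb$ and matching it to the global counterclockwise order on the circle at infinity.
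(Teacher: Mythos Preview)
Your approach largely parallels the paper's own proof sketch: both read the hypertree and the $d$-cycle factorization directly from the banyan foliation and the arc structure on the circle at infinity, and both invoke planarity for noncrossingness in the same way. The paper's version is slightly more structural---it removes the $d$ preimages of $u$ from the circle at infinity, labels the resulting $d$ open arcs $1,\ldots,d$, and then collapses each arc to a point so that the non-trivial blocks of $\lambda$ literally become the hyperedges---whereas you track the linear orderings dynamically as $u$ rotates. These are two presentations of the same construction, and your argument for~(b) via the $d$-fold cover $\Sb_p(d)\to\Sb_p$ is clean.

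There is, however, a genuine gap in your step~(a). You assert that ``each branched component $\Gamma$ of $\dir_{u_i}$ has a single interior vertex $\bb$ labeled by a critical point of some multiplicity $k$,'' but Lemma~\ref{lem:crit-directions} says no such thing: it only specifies the valence \emph{at} each critical point. If two or more critical values share the argument $u_i$, a single banyan component can contain several branch points, and then your count of $k+1$ sources and $k+1$ sinks and the local $w\mapsto w^{k+1}$ model no longer describe the whole component. The conclusion you want---that each component contributes a single cycle on a consecutive block of positions---is still correct, but it follows from the planarity and contractibility of the banyan component (its sinks occupy a contiguous arc on the circle at infinity because the tree is embedded in the disk, and crossing $u_i$ rotates them cyclically), not from the existence of a unique branch point. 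The paper sidesteps this issue by speaking only of ``non-trivial blocks of $\lambda$'' without any per-component structural claim and then deferring the remaining details to~\cite{mccammond17}; if you want a self-contained argument, you must handle the multi-branch-point case directly rather than appeal to Lemma~\ref{lem:crit-vert} alone.
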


\begin{proof}[Proof sketch]
  Since $u$ in not a vertex in $\Sb_p$, the noncrossing partition
  associated to $u$ is discrete, and the preimages of $u$ can be
  removed from the circle at infinity without removing a vertex of a
  non-trivial block of $\lambda$.  The remaining portions of the
  circle at infinity consist of $d$ open metric arcs of length
  $2\pi$.  Cyclically label these
  arcs $1,2,\ldots,d$.  We use these numbers to label the vertices of
  each non-trivial block in $\lambda$.  If we collapse the open arcs
  to points while maintaining planarity, the non-trivial blocks of
  $\lambda$ will overlap on vertices and become a noncrossing
  hypertree.  The non-trivial blocks of $\lambda$ can also be turned
  into cyclic permutations, and the product of these permutations in
  the appropriate order produces the $d$-cycle.  See 
  \cite{mccammond17} for details.
\end{proof}

This completes the proof of Theorem~\ref{thma:combinatorics}.
We conclude the section with two remarks on interpreting and
using these results.

\begin{rem}[Different noncrossing partitions]
  The factorization of a $d$-cycle produced by a noncrossing hypertree
  is a minimum reflection length factorization of a $d$-cycle.  There
  is a well-known connection between such factorizations and chains in
  the noncrossing partition lattice $\NC_{[d]}$ and this is another
  realization of this connection.  We should caution, however, that
  the chain in the noncrossing partition lattice determined the
  regular level sets (and an edge in $\Sb_p$) is \emph{not} related to
  the noncrossing partition lattice corresponding to a factorization
  of a $d$-cycle determined by the linear orders coming from a
  consecutive sequence of edges in the circle at infinity.  This is
  easiest to see in the completely generic case where every critical
  value has multiplicity one and all of their moduli and all of their
  arguments are distinct. The chain in the partition lattice is
  determined by the linear ordering of the critical values by
  \emph{latitude}, whereas the factorization of the $d$-cycle is
  determined, in part, by the cyclic ordering of the critical values
  by \emph{longitude}.  In a future article, where we consider
  continuously varying a polynomial, it will be made clear how one of
  these two can change while the other remains fixed.
\end{rem}

\begin{rem}[Dual braid complex]
  In a future article, we will give the space of all real noncrossing
  partitions a natural topology and cell structure, and identify this
  space with a version of the \emph{dual braid complex} - see
  \cite{brady01}, \cite{bradymccammond10}, and \cite{dmw20} for
  details on this complex. We will use this identification to
  prove that the complexified hyperplane complement of the braid
  arrangement deformation retracts to the pure version of the dual
  braid complex.  The tools developed in this article, and in our
  earlier work, will be crucial to our proofs.
\end{rem}

\section{Monodromy}\label{sec:monodromy}

In this final section we comment on the ways in which the monodromy
action can be read off from the structure of the branched annulus of
$p$.  Our discussion here will be somewhat brief, since similar
material was detailed by Elias Wegert in a recent article in
the Notices of the AMS \cite{wegert20}.

\begin{defn}[Monodromy]
  Given a covering map $f\colon Y \to X$ and a point $x\in X$, each
  oriented loop based at $x$ lifts to a collection of directed paths
  in $Y$, and each path connects one preimage of $x$ to another
  preimage of $x$, possibly the same one. Thus, each loop can used to
  determine a permutation of the points in the set $A = f^{-1}(x)$,
  these permutations are well-defined up to homotopies of paths, and
  they compose as expected.  The result is a group homomorphism from
  $\pi_1(X,x)$ to $\sym_{A}$.  This is the \emph{monodromy action} of
  the fundamental group of $X$ on the preimages of $x$.
\end{defn}

\begin{rem}[Polynomial monodromy]
  In the case of a polynomial $p\colon\C\to\C$ with $d$ distinct
  roots, we know that $p$ is a discretely branched covering map; the
  restriction of $p$ obtained by removing the critical values and
  their preimages is a covering map, and thus it has a corresponding
  monodromy action. Since $p$ has distinct roots, $0$ is not a
  critical value, and we may choose $0$ to be the basepoint in the
  image and the fundamental group $\pi_1(\C_\cvl,0)$, is a free group
  whose rank is equal to the number of critical values. 
\end{rem}

The monodromy action is encoded in the cell structure of the branched
annulus.

\begin{rem}[Monodromy and branched annuli]
  The lower boundary circle of the annulus $\A$ corresponds to $0$ in
  a precise sense, so we can replace directed loops based at $0$ with
  directed paths that start and end on $\lat_{-1}$.  The lifts of such
  paths will start and end at root circles and thus define a
  permutation of the roots.  Since every branch point of the cellular
  map $\pb \colon \Bb_p \to \Ab_p$ is a vertex, it is sufficient to
  choose paths which are suitably generic and transverse to the cell
  structure.  These generic paths avoid the $0$-skeleton of $\Ab_p$
  and lift to paths that avoid the $0$-skeleton of $\Bb_p$.  Every
  element of the free group $\pi_1(\C_\cvl,0)$ has such a transverse
  representative and the combinatorial nature of the cell structures
  involved make the permutation determined by the lifts easy to
  determine.
\end{rem}

We illustrate this idea with two simple examples.

\begin{example}
  In our running example, consider the path in $\C$ that starts at the
  origin, increases along the positive real axis, circles the unique
  critical value on this ray in a counter-clockwise fashion 
  and then returns to the origin along the real axis.  In $\Ab_p$ 
  there is an equivalent path that proceeds up the corresponding longitude,
  circles around the critical value and then returns to $\lat_{-1}$
  along this longitude.  This path in $\Ab_p$ lifts to a set of $5$
  paths in $\Bb_p$.  The path that start on the $a_3$ root circle,
  returns to the $a_3$ root circle, as do the paths that start on
  $a_4$ root circle and the $a_5$ root circle. The path that starts on
  the $a_1$ root circle ends on the $a_2$ root circle and vice versa.
  This is because of the branch point between them.  Thus, the
  corresponding permutation is $(a_1,a_2)$.
\end{example}

And finally, we conclude with comment on a connection between the
monodromy and the structure of the banyan trees in $\Bb_p$.

\begin{example}
  Let $u$ be a vertex in $\Sb$ and consider the path in $\Ab_p$ that
  starts just to the right of the copy of $u$ in $\lat_{-1}$, travels
  straight up until its height exceeds that of all the critical values
  on $\lng_u$, crosses over to the righthand side of $\lng_u$ and then
  returns straight down to the lefthand side of $u$ in $\lat_{-1}$.
  The permutation determined by this path is the same as the
  noncrossing permutation that corresponds to the noncrossing
  partition associated with the critical direction set $\dir_u$ as
  part of the real noncrossing partition associated to the polynomial
  $p$.  In particular, this permutation is completely determined by
  the structure of the non-trivial banyan trees in the critical
  direction set $\dir_u$.  Any other directed path in $\Ab_p$ that
  stays close to the critical longitude $\lng_u$ also determines a
  permutation that can be read off of the structure of the metric
  banyan trees comprising the critical direction set $\dir_u$.
\end{example}

\bibliographystyle{amsalpha}
\bibliography{paper-1_arxiv}

\end{document}